 \newtheorem{thm}{Theorem}[section]
 \newtheorem{cor}[thm]{Corollary}
 \newtheorem{lem}[thm]{Lemma}
 \newtheorem{assump}[thm]{Assumption}
 \newtheorem{prop}[thm]{Proposition}
 \theoremstyle{definition}
 \theoremstyle{remark}
 \newtheorem{rem}[thm]{Remark}
 \newtheorem*{ack}{Acknowledgments}
 \theoremstyle{claim}
 \numberwithin{equation}{section}
 \newcommand{\RR}{{\mathbb R}}
 \newcommand{\E}{{\mathrm e}}
 \newcommand{\e}{{\mathbf e}}
\begin{document}
\title[Volume preserving flow]{Volume preserving flow and Alexandrov-Fenchel type inequalities in hyperbolic space}
\author[B. Andrews]{Ben Andrews}
\address{Mathematical Sciences Institute,
Australian National University,
ACT 2601 Australia}
\email{\href{mailto:Ben.andrews@anu.edu.au}{Ben.andrews@anu.edu.au}}
\author[X. Chen]{Xuzhong Chen}
\address{College of Mathematics and Econometrics, Hunan University, Changsha, P. R. China}
\email{\href{mailto:chenxuzhong@hnu.edu.cn}{chenxuzhong@hnu.edu.cn}}
\author[Y. Wei]{Yong Wei}
\address{Mathematical Sciences Institute,
Australian National University,
ACT 2601 Australia}
\email{\href{mailto:yong.wei@anu.edu.au}{yong.wei@anu.edu.au}}

\date{\today}
\subjclass[2010]{53C44; 53C21}
\keywords {Volume preserving flow, Alexandrov-Fenchel inequalities, Hyperbolic space, Horospherically convex hypersurfaces.}
\thanks{This research was supported by Australian Laureate Fellowship FL150100126 of the Australian Research Council. The second author was also supported by the Fundamental Research Funds for the Central Universities.}


\begin{abstract}
In this paper, we study flows of hypersurfaces in hyperbolic space, and apply them to prove geometric inequalities. In the first part of the paper, we consider volume preserving flows by a family of curvature functions including positive powers of $k$-th mean curvatures with $k=1,\cdots,n$, and positive powers of $p$-th power sums $S_p$ with $p>0$.  We prove that if the initial hypersurface $M_0$ is smooth and closed and has positive sectional curvatures, then the solution $M_t$ of the flow has positive sectional curvature for any time $t>0$, exists for all time and converges to a geodesic sphere exponentially in the smooth topology. The convergence result can be used to show that certain Alexandrov-Fenchel quermassintegral inequalities, known previously for horospherically convex hypersurfaces, also hold under the weaker condition of positive sectional curvature.

In the second part of this paper, we study curvature flows for strictly horospherically convex hypersurfaces in hyperbolic space with speed given by a smooth, symmetric, increasing and homogeneous degree one function $f$ of the shifted principal curvatures $\lambda_i=\kappa_i-1$, plus a global term chosen to impose a constraint on the quermassintegrals of the enclosed domain, where $f$ is assumed to satisfy a certain condition on the second derivatives.  We prove that if the initial hypersurface is smooth, closed and strictly horospherically convex, then the solution of the flow exists for all time and converges to a geodesic sphere exponentially in the smooth topology. As applications of the convergence result,  we prove a new rigidity theorem on smooth closed Weingarten hypersurfaces in hyperbolic space, and a new class of Alexandrov-Fenchel type inequalities for smooth horospherically convex hypersurfaces in hyperbolic space.
\end{abstract}

\maketitle
\tableofcontents

\section{Introduction}
Let $X_0: M^n\to \mathbb{H}^{n+1}$ be a smooth embedding such that $M_0=X_0(M)$ is a closed smooth hypersurface in the hyperbolic space $\mathbb{H}^{n+1}$. We consider a smooth family of immersions $X:M^n\times [0,T)\rightarrow \mathbb{H}^{n+1}$ satisfying
\begin{equation}\label{flow-VMCF-0}
 \left\{\begin{aligned}
 \frac{\partial}{\partial t}X(x,t)=&~(\phi(t)-\Psi(x,t))\nu(x,t),\\
 X(\cdot,0)=&~X_0(\cdot),
  \end{aligned}\right.
 \end{equation}
 where $\nu(x,t)$ is the unit outward normal of $M_t=X(M,t)$, $\Psi$ is a smooth curvature function evaluated at the point $(x,t)$ of $M_t$, the global term $\phi(t)$ is chosen to impose a constraint on the enclosed volume or quermassintegrals of $M_t$.

The volume preserving mean curvature flow in hyperbolic space was first studied by Cabezas-Rivas and Miquel \cite{Cab-Miq2007} in 2007. By imposing horospherically convexity (the condition that all principal curvatures exceed $1$, which will also be called h-convex) on the initial hypersurface, they proved that the solution exists for all time and converges smoothly to a geodesic sphere.  Some other mixed volume preserving flows were considered in \cite{Mak2012,WX} with speed given by homogeneous degree one functions of the principal curvatures.
Recently Bertini and Pipoli \cite{Be-Pip2016} succeeded in treating flows by more general functions of mean curvature, including in particular any positive power of mean curvature. In a recent paper \cite{And-Wei2017-2}, the first and the third authors proved the smooth convergence of quermassintegral preserving flows with speed given by any positive power of a homogeneous degree one function $f$ of the principal curvatures for which the dual function $f_*(x_1,\cdots,x_n) = (f(x_1^{-1},\cdots,x_n^{-1}))^{-1}$ is concave and approaches zero on the boundary of the positive cone. This includes in particular the volume preserving flow by positive powers of $k$-th mean curvature for h-convex hypersurfaces. Note that in all the above mentioned work, the initial hypersurface is assumed to be h-convex.

One reason to consider constrained flows of the kind considered here is to prove geometric inequalities:  In particular, the convergence of the volume-preserving mean curvature flow to a sphere implies that the area of the initial hypersurface is no less than that of a geodesic sphere with the same enclosed volume, since the area is non-increasing while the volume remains constant under the flow.  The same motivation lies behind  \cite{WX}, where inequalities between quermassintegrals were deduced from the convergence of certain flows.

In this paper, we make the following contributions:
\begin{itemize}
  \item[(1)] In the first part of the paper, we weaken the horospherical convexity condition, allowing instead hypersurfaces for which the intrinsic sectional curvatures are positive.  We
  consider the flow \eqref{flow-VMCF-0} for hypersurfaces with positive sectional curvature and with speed $\Psi$ given by any positive power of a smooth, symmetric, strictly increasing and homogeneous of degree one function of the Weingarten matrix $\mathcal{W}$ of $M_t$. Here we say a hypersurface $M$ in hyperbolic space has positive sectional curvature if its sectional curvature $R_{ijij}^M>0$ for any $1\leq i< j\leq n$, which by Gauss equation is equivalent to the principal curvatures of $M$ satisfying $\kappa_i\kappa_j>1$ for $1\leq i\neq j\leq n$.  This is a weaker condition than h-convexity.  As a consequence we deduce inequalities between volume and other quermassintegrals for hypersurfaces with positive sectional curvature, extending inequalities previously known only for horospherically convex hypersurfaces.
  \item[(2)] In the second part of this paper, we consider flows \eqref{flow-VMCF-0} for strictly h-convex hypersurfaces in which the speed $\Psi$ is homogeneous as a function of the shifted Weingarten matrix $\mathcal{W}-\mathrm{I}$ of $M_t$, rather than the Weingarten matrix itself.  Using these flows we are able to prove a new class of integral inequalities for horospherically convex hypersurfaces.
  \item[(3)] In order to understand these new functionals we introduce some new machinery for horospherically convex regions, including a horospherical Gauss map and a horospherical support function. We also develop an interesting connection (closely related to the results of \cite{EGM}) between flows of h-convex hypersurfaces in hyperbolic space by functions of principal curvatures, and conformal flows of conformally flat metrics on $S^n$ by functions of the eigenvalues of the Schouten tensor.  This allows us to translate our results to convergence theorems for metric flows, and our isoperimetric inequalities to corresponding results for conformally flat metrics. We expect that these will prove useful in future work.
\end{itemize}
We will describe our results in more detail in the rest of this section:

\subsection{Volume preserving flow with positive sectional curvature}
Suppose that the initial hypersurface $M_0$ has positive sectional curvature.    We consider the smooth family of immersions $X:M^n\times [0,T)\rightarrow \mathbb{H}^{n+1}$ satisfying
\begin{equation}\label{flow-VMCF}
 \left\{\begin{aligned}
 \frac{\partial}{\partial t}X(x,t)=&~(\phi(t)-F^{\alpha}(\mathcal{W}))\nu(x,t),\\
 X(\cdot,0)=&~X_0(\cdot),
  \end{aligned}\right.
 \end{equation}
 where $\alpha>0$, $\nu(x,t)$ is the unit outward normal of $M_t=X(M,t)$, $F$ is a smooth, symmetric, strictly increasing and homogeneous of degree one function of the Weingarten matrix $\mathcal{W}$ of $M_t$. The global term $\phi(t)$ in \eqref{flow-VMCF} is defined by
\begin{equation}\label{s1:phit}
  \phi(t)=\frac{1}{|M_t|}\int_{M_t}F^{\alpha} d\mu_t
\end{equation}
such that the volume of $\Omega_t$ remains constant along the flow \eqref{flow-VMCF}, where $d\mu_t$ is the area measure on $M_t$ with respect to the induced metric.

Since $F(\mathcal{W})$ is symmetric with respect to the components of $\mathcal{W}$, by a theorem of Schwarz \cite{Scharz75} we can write $F(\mathcal{W})=f(\kappa)$ as a symmetric function of the eigenvalues of $\mathcal{W}$. We assume that $f$ satisfies the following assumption:
\begin{assump}\label{s1:Asum}
Suppose $f$ is a smooth symmetric function defined on the positive cone $\Gamma_+:=\{\kappa=(\kappa_1,\cdots,\kappa_n)\in \mathbb{R}^n: \kappa_i>0,~\forall~i=1,\cdots,n\}$, and satisfies
\begin{itemize}
  \item[(i)]  $f$ is positive, strictly increasing, homogeneous of degree one and is normalized such that $f(1,\cdots,1)=1$;
  \item[(ii)] For any $i\neq j$,
  \begin{equation}\label{s1:asum-1}
    (\frac{\partial f}{\partial \kappa_i}\kappa_i-\frac{\partial f}{\partial \kappa_j}\kappa_j)(\kappa_i-\kappa_j)~\geq~0.
  \end{equation}
  \item[(iii)] For all $(y_1,\cdots,y_n)\in \mathbb{R}^n$,
  \begin{equation}\label{s1:asum-2}
    \sum_{i,j}\frac{\partial^2 \log f}{\partial\kappa_i\partial\kappa_j}y_iy_j+\sum_{i=1}^n\frac 1{\kappa_i}\frac{\partial\log f}{\partial \kappa_i}y_i^2~\geq~0.
  \end{equation}
\end{itemize}
\end{assump}
\noindent Examples satisfying Assumption \ref{s1:Asum} include  $f=n^{-1/k}S_k^{1/k}$ $(k>0)$ and $f=E_k^{1/k}$ (see, e.g., \cite{GaoLM17,GuanMa03}), where
\begin{equation*}
  E_{k}={\binom{n}{k}}^{-1}\sigma_k(\kappa)={\binom{n}{k}}^{-1}\sum_{1\leq i_1<\cdots<i_k\leq n}\kappa_{i_1}\cdots \kappa_{i_k},\qquad k=1,\cdots,n.
\end{equation*}
is the (normalized) $k$-th mean curvature of $M_t$ and $ S_k(\kappa)=\sum_{i=1}^n\kappa_i^k$ is the $k$-th power sum of $\kappa$ for $k>0$.  The inequalities \eqref{s1:asum-1} and \eqref{s1:asum-2} are equivalent to the statement that $\log F$ is a convex function of the components of $\log{\mathcal W}$, which is the map with the same eigenvectors as ${\mathcal W}$ and eigenvalues $\log\kappa_i$.  In particular, if $f_1$ and $f_2$ are two symmetric functions satisfying \eqref{s1:asum-1} and \eqref{s1:asum-2}, then the function $f_1^\alpha$ with $\alpha>0$ and the product $f_1f_2$ also satisfy \eqref{s1:asum-1} and \eqref{s1:asum-2}. Note that the Cauchy-Schwarz inequality and \eqref{s1:asum-2} imply that any symmetric function $f$ satisfying \eqref{s1:asum-2} must be inverse concave, i.e., its dual function
 $$f_*(z_1,\cdots,z_n)=f(z_1^{-1},\cdots,z_n^{-1})^{-1}$$
is concave with respect to its argument.

The first result of this paper is the following convergence result for the flow \eqref{flow-VMCF}:
\begin{thm}\label{thm1-1}
Let $X_0: M^n\to \mathbb{H}^{n+1}$ be a smooth embedding such that $M_0=X_0(M)$ is a closed hypersurface in $\mathbb{H}^{n+1}$ $(n\geq 2)$ with positive sectional curvature. Assume that $f$ satisfies Assumption \ref{s1:Asum}, and either
\begin{itemize}
  \item[(i)] $f_*$ vanishes on the boundary of $\Gamma_+$, and
  \begin{equation}\label{thm1-cond1}
    \lim_{x\to 0+}f(x,\frac 1x,\cdots,\frac 1x)~=~+\infty,
  \end{equation}
and $\alpha>0$, or
 \item[(ii)] $n=2$, $f=(\kappa_1\kappa_2)^{1/2}$ and $\alpha\in [1/2,2]$.
\end{itemize}
Then the flow \eqref{flow-VMCF} with global term $\phi(t)$ given by \eqref{s1:phit} has a smooth solution $M_t$ for all time $t\in [0,\infty)$, and $M_t$ has positive sectional curvature for each $t>0$ and converges smoothly and exponentially to a geodesic sphere of radius $r_{\infty}$ determined by $\mathrm{Vol}(B(r_{\infty}))=\mathrm{Vol}(\Omega_0)$  as $t\to\infty$.
\end{thm}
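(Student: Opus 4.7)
The plan is the standard program for constrained curvature flows: short-time existence, uniform a priori estimates, long-time existence, and exponential convergence. Short-time existence follows from standard quasilinear parabolic theory, since $\alpha>0$ and the strict monotonicity of $F$ make the flow \eqref{flow-VMCF} uniformly parabolic on $M_0$. The heart of the matter is the preservation of the weaker pinching condition $\kappa_i\kappa_j>1$ (positive sectional curvature) in place of the horospherical convexity preserved in earlier works; once this is in hand, the rest of the program is a comparatively routine adaptation of arguments from \cite{And-Wei2017-2} and \cite{WX}.

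For the pinching I would apply Hamilton's tensor maximum principle to the tensor $h_i^k h_{kj}g_{lm}-h_i^k h_{km}g_{lj}$ (equivalently, to $\min_{i\ne j}(\kappa_i\kappa_j-1)$). A direct computation in a principal frame using the hyperbolic evolution of the Weingarten tensor $h_i^j$ under speed $\phi(t)-F^\alpha$ shows that the reaction term in $\partial_t(\kappa_i\kappa_j)$ produced by the ambient curvature of $\mathbb{H}^{n+1}$ together with the global term $\phi(t)$ collapses to
\begin{equation*}
(F^\alpha-\phi(t))(\kappa_i+\kappa_j)(\kappa_i\kappa_j-1),
\end{equation*}
which vanishes exactly on the boundary $\{\kappa_i\kappa_j=1\}$ of the admissible region. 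Consequently, at a first spacetime zero of $\kappa_i\kappa_j-1$, the sign of its time derivative is controlled entirely by the gradient and second-order contributions coming from $\dot F^{kl}\nabla_k\nabla_l h_i^j$ together with $\ddot F^{kl,mn}\nabla h_{kl}\nabla h_{mn}$. Assumption \ref{s1:Asum}(ii) handles the off-diagonal contributions produced by distinct principal curvatures (arising from commuting covariant derivatives of $h$ at a non-simple eigenvalue), while Assumption \ref{s1:Asum}(iii) --- the log-convexity of $F$ as a function of $\log\mathcal{W}$ --- supplies the correct sign for the Hessian term. Together they force the evolution at a zero to be non-negative, so positive sectional curvature is preserved. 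In case (ii) the same mechanism applies much more transparently because $F^2=\kappa_1\kappa_2$ equals one plus the Gauss curvature $R^{M_t}_{1212}$, and the range $\alpha\in[1/2,2]$ is precisely what keeps the resulting gradient terms with the correct sign.

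Once positive sectional curvature is preserved, all principal curvatures are strictly positive and $\Omega_t$ is geodesically convex, so $M_t$ is star-shaped about any interior point. Writing $M_t$ as a radial graph over $S^n$, constancy of volume together with the isoperimetric inequality in $\mathbb{H}^{n+1}$ yields uniform two-sided bounds on the radial function, and monotonicity of the area along the flow pins down the global term $\phi(t)$, and hence the speed. The preserved pinching $\kappa_i\kappa_j\ge c>0$ gives $\kappa_{\min}\ge c/\kappa_{\max}$, so to obtain uniform parabolicity it suffices to bound $\kappa_{\max}$ from above; this is achieved by a maximum principle argument on $\log\kappa_{\max}$ plus an auxiliary function of the support function or the radial function, following the scheme in \cite{And-Wei2017-2}. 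Krylov-Safonov and Schauder estimates then give uniform $C^{k,\alpha}$ bounds for all $k$, hence long-time existence. Exponential convergence to a geodesic sphere of radius $r_\infty$ with $\mathrm{Vol}(B(r_\infty))=\mathrm{Vol}(\Omega_0)$ is then obtained from monotonicity of the area (or the appropriate quermassintegral) under the flow together with linearization at the limit sphere, as in the cited works.

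The main obstacle is the pinching estimate of the second paragraph: the cancellation in the reaction term, and the precise algebraic roles played by the two halves of Assumption \ref{s1:Asum}, are essential, and the borderline character of the condition $\kappa_i\kappa_j>1$ means the estimate must be proved without any margin. Case (ii) requires a separate and slightly more delicate analysis, in which the explicit range $\alpha\in[1/2,2]$ turns out to be sharp for this method. Once the pinching is in place, the remaining estimates largely parallel those already developed in the horospherically convex setting, with the geodesically convex radial graph representation replacing the horospherical graph representation used there.
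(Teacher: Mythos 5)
Your high-level plan is roughly correct, but there are two substantive gaps and one mischaracterization that would cause the argument to fail as written. First, the claim that the reaction term in the evolution of $\kappa_i\kappa_j-1$ ``collapses to $(F^\alpha-\phi)(\kappa_i+\kappa_j)(\kappa_i\kappa_j-1)$ which vanishes on the boundary'' is wrong: only the piece $(\Psi-\phi)(h_i^kh_k^j-\delta_i^j)$ of the zero-order terms has this form. The Simons-identity contribution $(\dot\Psi^{kl}h_k^rh_{rl}+\dot\Psi^{kl}g_{kl})h_i^j-\dot\Psi^{kl}h_{kl}(h_i^ph_p^j+\delta_i^j)$ does \emph{not} vanish at $\kappa_1\kappa_2=1$; it equals $2\alpha f^{\alpha-1}\sum_k\dot f^k(\kappa_k-\kappa_2)(\kappa_k-\kappa_1)$ up to a multiple of $\kappa_1\kappa_2-1$, and this extra piece is nonnegative \emph{only} because at the minimum of $G$ the vectors $e_1,e_2$ diagonalize $h$ with $\kappa_1\le\kappa_2\le\kappa_k$. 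Verifying this requires exactly the frame-bundle minimization the paper carries out (and the paper explicitly notes the argument ``cannot be deduced directly'' from the rank-two tensor maximum principle, since the good extra terms $-\sum_{p>2}\kappa_2(\kappa_p-\kappa_1)^{-1}(\nabla h_{1p})^2-\cdots$ come from varying the orthonormal frame). Your statement that ``the sign of its time derivative is controlled entirely by the gradient and second-order contributions'' is therefore a genuine logical gap, even though the conclusion happens to hold.

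Second, the shape estimate does not follow from ``constancy of volume together with the isoperimetric inequality.'' For merely convex domains in $\mathbb{H}^{n+1}$ (without h-convexity), fixed volume does not bound the outer radius --- a long thin convex ``sausage'' can have small volume and arbitrarily large diameter, and the comparison $\rho_+\le c(\rho_-+\rho_-^{1/2})$ used in earlier work is unavailable. This is precisely the obstacle the paper overcomes via an Alexandrov reflection argument to bound the diameter, followed by a projection through the Klein model to a convex Euclidean set, together with Steinhagen's width inequality to get the inner-radius lower bound. Your proposal simply skips this difficulty. Finally, your treatment of case (ii) is confused: the preservation of $\kappa_1\kappa_2>1$ holds for all $\alpha>0$, and the restriction $\alpha\in[1/2,2]$ is needed instead for a separate pinching-ratio bound $\kappa_2/\kappa_1\le C$, obtained by a maximum principle applied to $K^{\alpha-2}(H^2-4K)$. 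Your assertion that ``to obtain uniform parabolicity it suffices to bound $\kappa_{\max}$ from above'' is circular in this case, because Tso's technique bounds $F=(\kappa_1\kappa_2)^{\alpha/2}$, and a bound on $\kappa_1\kappa_2$ alone does not bound $\kappa_{\max}$; the pinching-ratio estimate is an essential additional ingredient, not a cosmetic refinement.
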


\begin{rem}
Examples of function $f$ satisfying Assumption \ref{s1:Asum} and the condition (i) of Theorem \ref{thm1-1} include:
\begin{itemize}
  \item[a).] $n\geq 2$, $f=n^{-1/k}S_k^{1/k}$ with $k>0$;
  \item[b).] $n\geq 3$, $f= E_k^{1/k}$ with  $k=1,\cdots,n$;
  \item[c).] $n=2, f=(\kappa_1+\kappa_2)/2$.
\end{itemize}\end{rem}
\begin{rem}
We remark that the contracting curvature flows for surfaces with positive scalar curvature in hyperbolic 3-space $\mathbb{H}^3$ have been studied by the first two authors in a recent work \cite{And-chen2014}.
\end{rem}

As a key step in the proof of Theorem \ref{thm1-1}, we prove in \S \ref{sec:PSC} that the positivity of sectional curvatures of the evolving hypersurface $M_t$ is preserved along the flow \eqref{flow-VMCF} with any $f$ satisfying Assumption \ref{s1:Asum} and any $\alpha>0$. In order to show that the positivity of sectional curvatures are preserved, we consider the sectional curvature as a function on the frame bundle $O(M)$ over $M$, and apply a maximum principle.  This requires a rather delicate computation, using inequalities on the Hessian on the total space of $O(M)$ to show the required inequality on the time derivative at a minimum point.  The argument is related to that used by the first author to prove a generalised tensor maximum principle in \cite{And2007}*{Theorem 3.2}, but cannot be deduced directly from that result.  The argument combines the ideas of the generalised tensor maximum principle with those of vector bundle maximum principles for reaction-diffusion equations \cites{AH,Ha1986}.

 We remark that the flow \eqref{flow-VMCF} with
\begin{equation}\label{s1:f-quoti}
  f~=~\left(\frac{E_k}{E_l}\right)^{\frac 1{k-l}},\quad 1\leq l<k\leq n
\end{equation}
and any power $\alpha>0$ does not preserve positive sectional curvatures:  Counterexamples can be produced in the spirit of the constructions in \cite{Andrews-McCoy-Zheng}*{Sections 4--5}.

The remaining parts of the proof of Theorem \ref{thm1-1} will be given in \S \ref{sec:thm1-pf}. In \S \ref{sec:4-1}, we will derive a uniform estimate on the inner radius and outer radius of the evolving domains $\Omega_t$ along the flow \eqref{flow-VMCF}. Recall that the inner radius $\rho_-$ and outer radius $\rho_+$ of a bounded domain $\Omega$ are defined as
\begin{equation*}
  \rho_-= ~\sup \bigcup_{p\in\Omega}\{\rho>0:\ B_{\rho}(p)\subset\Omega\} \qquad
  \rho_+= ~\inf\bigcup_{p\in\Omega}\{\rho>0: \Omega\subset B_{\rho}(p)\},
\end{equation*}
where $B_{\rho}(p)$ denotes the geodesic ball of radius $\rho$ and centered at some point $p$ in the hyperbolic space. All the previous papers \cite{And-Wei2017-2,Be-Pip2016,Cab-Miq2007,Mak2012,WX} on constrained curvature flows in hyperbolic space focus on horospherically convex domains, which have the property that $\rho_+\leq c(\rho_-+\rho_-^{1/2})$, see e.g. \cite{Cab-Miq2007,Mak2012}. However, no such property is known for hypersurfaces with positive sectional curvature. Our idea to overcome this obstacle is to use an Alexandrov reflection argument to bound the diameter of the domain $\Omega_t$ enclosed by the flow hypersurface $M_t$. Then we project the domain $\Omega_t$ to the unit ball in Euclidean space $\mathbb{R}^{n+1}$ via the Klein model of the hyperbolic space. The upper bound on the diameter of $\Omega_t$ implies that this map has bounded distortion. This together with the preservation of the volume of $\Omega_t$ gives a uniform lower bound on the inner radius of $\Omega_t$.

Then in \S \ref{sec:4-2} we adapt Tso's technique \cite{Tso85}  to derive an upper bound on the speed if $f$ satisfies Assumption \ref{s1:Asum}, where the positivity of sectional curvatures of $M_t$ will be used to estimate the zero order terms of the evolution equation of the auxiliary function. In \S \ref{sec:4-3}, we will complete the proof of Theorem \ref{thm1-1} by obtaining uniform bounds on the principal curvatures. In the case (i) of Theorem \ref{thm1-1}, the upper bound of $f$ together with the positivity of sectional curvatures imply the uniform two-sides positive bound of the principal curvatures of $M_t$. In the case (ii) of Theorem \ref{thm1-1}, the estimate $1\leq \kappa_1\kappa_1=f(\kappa)^2\leq C$ does not prevent $\kappa_2$ from going to infinity. Instead, we will obtain the estimate on the pinching ratio $\kappa_2/\kappa_1$ by applying the maximum principle to the evolution equation of
$G(\kappa_1,\kappa_2)=(\kappa_1\kappa_2)^{\alpha-2}(\kappa_2-\kappa_1)^2$ with $\alpha\in [1/2,2]$. This idea has been applied by the first two authors in \cite{And1999,And-chen2012} to prove the pinching estimate for the contracting flow by powers of Gauss curvature in $\mathbb{R}^3$.  Once we have the uniform estimate on the principal curvatures of the evolving hypersurfaces, higher regularity estimates can be derived by a standard argument.  A continuation argument then yields the long time existence of the flow and the Alexandrov reflection argument as in \cite[\S 6]{And-Wei2017-2} implies the smooth convergence of the flow to a geodesic sphere.

\subsection{Alexandrov-Fenchel inequalities}
The volume preserving curvature flow is a useful tool in the study of hypersurface geometry. We will illustrate an application of Theorem \ref{thm1-1} in the proof of Alexandrov-Fenchel type inequalities (involving the quermassintegrals) for hypersurfaces in hyperbolic space. Recall that for a convex domain $\Omega$ in hyperbolic space, the quermassintegral $W_k(\Omega)$ is defined as follows (see \cite{Sant2004,Sol2006}):\footnote{Note that the definition \eqref{S1:Wk-def1} is different with the definition given in \cite{Sol2006} by a constant multiple $\frac{n+1-k}{n+1}$.}
\begin{equation}\label{S1:Wk-def1}
  W_k(\Omega)=~\frac{\omega_{k-1}\cdots\omega_0}{\omega_{n-1}\cdots\omega_{n-k}}\int_{\mathcal{L}_k}\chi(L_k\cap\Omega)dL_k,\quad k=1,\cdots,n,
\end{equation}
where $\mathcal{L}_k$  is the space of $k$-dimensional totally geodesic subspaces $L_k$ in $\mathbb{H}^{n+1}$ and $\omega_n$ denotes the area of $n$-dimensional unit sphere in Euclidean space. The function $\chi$ is defined to be $1$ if  $L_k\cap\Omega\neq \emptyset$ and to be $0$ otherwise. Furthermore, we set
\begin{equation*}
  W_0(\Omega)=|\Omega|,\quad W_{n+1}(\Omega)=|\mathbb{B}^{n+1}|=\frac{\omega_n}{n+1}.
\end{equation*}
If the boundary of $\Omega$ is smooth, we can define the principal curvatures $\kappa=(\kappa_1,\cdots,\kappa_n)$ and the curvature integrals
\begin{equation}\label{s1:CurInt}
  V_{n-k}(\Omega)~=~\int_{\partial\Omega}E_k(\kappa)d\mu,\quad k=0,1,\cdots,n
\end{equation}
of the boundary $M=\partial\Omega$. The quermassintegrals and the curvature integrals of a smooth convex domain $\Omega$ in $\mathbb{H}^{n+1}$ are related by the following equations (see \cite{Sol2006}):
\begin{align}
  V_{n-k}(\Omega) ~=&~ (n-k)W_{k+1}(\Omega)+ kW_{k-1}(\Omega),\quad k=1,\cdots,n\label{s1:quermass-1}\\
  V_n(\Omega) ~=&~nW_1(\Omega)~=~|\partial\Omega|.\label{s1:quermass-2}
\end{align}
In \cite{WX}, Wang and Xia proved the Alexandrov-Fenchel inequalities for smooth h-convex domain $\Omega$ in $\mathbb{H}^{n+1}$, which states that there holds
\begin{equation}\label{s1:AF-WX}
  W_k(\Omega)~\geq f_k\circ f_l^{-1}(W_l(\Omega))
\end{equation}
for any $0\leq l<k\leq n$, with equality if and only if $\Omega$ is a geodesic ball, where $f_k: \mathbb{R}_+\to \mathbb{R}_+$ is an increasing function  defined by $f_k(r)=W_k(B(r))$, the $k$-th quermassintegral of the geodesic ball of radius $r$. The proof in \cite{WX} is by applying the quermassintegral preserving flow for smooth h-convex hypersurfaces with speed given by the quotient \eqref{s1:f-quoti} and $\alpha=1$, and is similar with the Euclidean analogue considered by McCoy \cite{McC2005}.  The inequality \eqref{s1:AF-WX} can imply the following inequality
\begin{equation}\label{s1:AF-WX2}
  \int_{\partial\Omega}E_kd\mu~\geq~|\partial\Omega|\left(1+\left(\frac{|\partial\Omega|}{\omega_n}\right)^{-2/n}\right)^{k/2}
\end{equation}
for smooth h-convex domains, which compares the curvature integral \eqref{s1:CurInt} and the boundary area. Note that the inequality \eqref{s1:AF-WX2} with $k=2$ was proved earlier by the third author with Li and Xiong \cite{LWX-2014} for star-shaped and $2$-convex domains using the inverse curvature flow in hyperbolic space.  For the other even $k$, the inequality \eqref{s1:AF-WX2} was also proved for smooth h-convex domains using the inverse curvature flow by Ge, Wang and Wu \cite{GWW-2014JDG}. It's an interesting problem to prove the inequalities \eqref{s1:AF-WX} and \eqref{s1:AF-WX2} under an assumption that is  weaker than h-convexity.

Applying the result in Theorem \ref{thm1-1}, we show that the \emph{h-convexity} assumption for the inequality \eqref{s1:AF-WX} can be replaced by the weaker assumption of \emph{positive sectional curvature} in the case $l=0$ and $1\leq k\leq n$.
\begin{cor}\label{s1:cor-1}
Let $M=\partial\Omega$ be a smooth closed hypersurface in $\mathbb{H}^{n+1}$ which has positive sectional curvature and encloses a smooth bounded domain $\Omega$. Then for any $n\geq 2$ and $k=1,\cdots,n$, we have
\begin{equation}\label{s1:AF-0}
  W_k(\Omega)~\geq f_k\circ f_0^{-1}(W_0(\Omega)),
\end{equation}
where $f_k: \mathbb{R}_+\to \mathbb{R}_+$ is an increasing function  defined by $f_k(r)=W_k(B(r))$, the $k$-th quermassintegral of the geodesic ball of radius $r$. Moreover, equality holds in \eqref{s1:AF-0} if and only if $\Omega$ is a geodesic ball.
\end{cor}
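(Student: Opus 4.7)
The plan is to apply Theorem \ref{thm1-1} with the specific choice $F=E_k^{1/k}$ and $\alpha=k$, so that the speed $F^\alpha$ coincides with $E_k$; the volume preserving condition then forces $W_k$ to be monotone non-increasing by a Cauchy--Schwarz argument, and smooth convergence to a geodesic sphere of the correct enclosed volume gives the desired inequality.

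First I would verify that this choice of $(F,\alpha)$ satisfies the hypotheses of Theorem \ref{thm1-1}. For $n\geq 3$ and $k\in\{1,\ldots,n\}$, the function $F=E_k^{1/k}$ satisfies Assumption \ref{s1:Asum} together with the hypotheses of case (i), as recorded in the remark following Theorem \ref{thm1-1}. For $n=2$ the relevant cases are $k=1$ (where $F=(\kappa_1+\kappa_2)/2$, covered by case (i)) and $k=2$ (where $F=(\kappa_1\kappa_2)^{1/2}$ and $\alpha=2\in[1/2,2]$, covered by case (ii)). In every case Theorem \ref{thm1-1} applies: the flow starting from $M_0=\partial\Omega$, which has positive sectional curvature by hypothesis, exists for all $t\geq 0$, preserves positive sectional curvature, and converges smoothly to the geodesic sphere of radius $r_\infty=f_0^{-1}(W_0(\Omega))$.

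Next I would use the standard variation formula
\[
\frac{d}{dt}W_k(\Omega_t)=\frac{n+1-k}{n+1}\int_{M_t}E_k\,(\phi(t)-F^\alpha)\,d\mu_t
\]
to compute the evolution of $W_k$. Substituting $F^\alpha=E_k$ and $\phi(t)=|M_t|^{-1}\int_{M_t}E_k\,d\mu_t$ yields
\[
\frac{d}{dt}W_k(\Omega_t)=\frac{n+1-k}{n+1}\left(\frac{\bigl(\int_{M_t}E_k\,d\mu_t\bigr)^2}{|M_t|}-\int_{M_t}E_k^2\,d\mu_t\right)\leq 0
\]
by the Cauchy--Schwarz inequality. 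Hence $W_k(\Omega_t)$ is non-increasing in $t$, and the smooth convergence $M_t\to\partial B(r_\infty)$ gives
\[
W_k(\Omega)=W_k(\Omega_0)\geq\lim_{t\to\infty}W_k(\Omega_t)=f_k(r_\infty)=f_k\circ f_0^{-1}(W_0(\Omega)),
\]
which is exactly \eqref{s1:AF-0}.

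For the equality case, if equality holds in \eqref{s1:AF-0} then monotonicity forces $W_k(\Omega_t)$ to be constant in $t$, so the Cauchy--Schwarz inequality above must be an equality at every time. This requires $E_k$ to be constant on each $M_t$, whence $F^\alpha=E_k=\phi(t)$ and the speed vanishes identically; by uniqueness for the parabolic flow \eqref{flow-VMCF}, $M_t\equiv M_0$, and convergence to the geodesic sphere of radius $r_\infty$ then forces $M_0=\partial B(r_\infty)$. Since the heavy analytical work is entirely encapsulated in Theorem \ref{thm1-1}, the only real task in this corollary is to make the right choice of $F$ and $\alpha$ so that monotonicity of $W_k$ falls out of Cauchy--Schwarz; this is the heart of the argument and presents no serious obstacle beyond the standard variation formula for quermassintegrals.
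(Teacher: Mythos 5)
Your proof is correct and follows essentially the same route as the paper: apply Theorem~\ref{thm1-1} with $F=E_k^{1/k}$ (checking cases (i) and (ii) of the hypothesis exactly as you do), use the variation formula for $W_k$ together with the volume-preserving choice of $\phi(t)$ and Cauchy--Schwarz/H\"older to show $W_k(\Omega_t)$ is non-increasing, and conclude from smooth convergence to the geodesic sphere $\partial B(r_\infty)$ with $f_0(r_\infty)=W_0(\Omega)$. The two small deviations are cosmetic: you fix $\alpha=k$ so the speed becomes $\phi(t)-E_k$ and Cauchy--Schwarz suffices (the paper allows any admissible $\alpha$ and invokes H\"older), and your variation formula carries an extra positive factor $\tfrac{n+1-k}{n+1}$ that does not appear in Lemma~\ref{s2:lem2} under the paper's normalization of $W_k$ (cf.\ the footnote to \eqref{S1:Wk-def1}); since the factor is a positive constant this does not affect the monotonicity or the conclusion.
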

The quermassintegral $W_k(\Omega_t)$ of the evolving domain $\Omega_t$ along the flow \eqref{flow-VMCF} with $F=E_k^{1/k}$ satisfies (see Lemma \ref{s2:lem2})
\begin{equation*}
  \frac d{dt}W_k(\Omega_t)~=~\int_{M_t}E_k(\phi(t)-E_k^{\alpha/k})d\mu_t,
\end{equation*}
which is non-positive for each $\alpha>0$ by the choice \eqref{s1:phit} of $\phi(t)$ and the H\"{o}lder inequality. This means that $W_k(\Omega_t)$ is monotone decreasing along the flow \eqref{flow-VMCF} with $F=E_k^{1/k}$ unless $E_k$ is constant on $M_t$ (which is equivalent to $M_t$ being a geodesic sphere). Then Corollary \ref{s1:cor-1} follows from the monotonicity of $W_k$ and the convergence result in Theorem \ref{thm1-1}.

\subsection{Volume preserving flow for horospherically convex hypersurfaces}

In the second part of this paper, we will consider the flow of h-convex hypersurfaces in hyperbolic space with speed given by functions of the shifted Weingarten matrix $\mathcal{W}-\mathrm{I}$ plus a global term chosen to preserve modified quermassintegrals of the evolving domains.
Let us first define the following modified quermassintegrals:
 \begin{equation}\label{s1:Wk-td}
  \widetilde{W}_k(\Omega)~:=~\sum_{i=0}^k(-1)^{k-i}\binom{k}{i}W_i(\Omega),\quad k=0,\cdots,n,
 \end{equation}
for a h-convex domain $\Omega$ in hyperbolic space.  Thus $\widetilde{W}_k$ is a linear combination of the quermassintegrals of $\Omega$. In particular, $\widetilde{W}_0(\Omega)=|\Omega|$ is the volume of $\Omega$. The modified quermassintegrals defined in \eqref{s1:Wk-td} satisfy the following property:
\begin{prop}\label{s1:wk-prop}
The modified quermassintegral ${\widetilde W}_k$ is monotone with respect to inclusion for h-convex domains:  That is, if $\Omega_0$ and $\Omega_1$ are h-convex domains with $\Omega_0\subset\Omega_1$, then $\widetilde W_k(\Omega_0)\leq\widetilde W_k(\Omega_1)$.
\end{prop}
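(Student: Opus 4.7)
The plan reduces Proposition~1.7 to a first-variation argument. For a smooth family of domains $\Omega_t\subset\mathbb{H}^{n+1}$ with $\partial\Omega_t = M_t$ moving under $\partial_tX = \eta\nu$, I would first establish the variation formula
$$\frac{d}{dt}W_k(\Omega_t) ~=~ \int_{M_t}E_k(\kappa)\,\eta\,d\mu_t, \qquad k=0,1,\ldots,n.$$
The cases $k=0,1$ are the classical first variations of volume and area (using $H=nE_1$). The general case follows inductively from the relations \eqref{s1:quermass-1}--\eqref{s1:quermass-2}, combined with the evolution $\partial_t h_i^{\,j} = -\nabla_i\nabla^j\eta - \eta(h_i^{\,k}h_k^{\,j}-\delta_i^{\,j})$ of the Weingarten map in $\mathbb{H}^{n+1}$ (where the $+\eta\delta_i^{\,j}$ term encodes ambient sectional curvature $-1$) and divergence-freeness of the Newton tensors.

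The key algebraic step is the binomial identity
$$E_k(\lambda_1,\ldots,\lambda_n) ~=~ \sum_{i=0}^{k}(-1)^{k-i}\binom{k}{i}E_i(\kappa), \qquad \lambda_i := \kappa_i - 1,$$
verified by expanding $\prod_{i\in S}(\kappa_i-1)$ over $k$-subsets $S\subset\{1,\ldots,n\}$ and collecting with $\sigma_j(\kappa) = \binom{n}{j}E_j(\kappa)$. Substituting into the definition \eqref{s1:Wk-td} converts the alternating sum of variations into
$$\frac{d}{dt}\widetilde{W}_k(\Omega_t) ~=~ \int_{M_t}E_k(\lambda)\,\eta\,d\mu_t.$$
For h-convex $\Omega_t$ the shifted principal curvatures satisfy $\lambda_i\geq 0$, whence $E_k(\lambda)\geq 0$. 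Consequently $\widetilde{W}_k$ is non-decreasing along any outward-propagating ($\eta\geq 0$) family of smooth h-convex domains.

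To convert this infinitesimal statement into global monotonicity, I would connect $\Omega_0$ to $\Omega_1$ by such a family. Using the horospherical support function $u_\Omega\colon S^n\to\mathbb{R}$ promised later in the paper, each h-convex domain is the intersection of its supporting horoballs; inclusion $\Omega\subset\Omega'$ is equivalent to $u_{\Omega}\leq u_{\Omega'}$, and the linear interpolation $u_t=(1-t)u_{\Omega_0}+t\,u_{\Omega_1}$ should yield a monotone one-parameter family of h-convex domains from $\Omega_0$ to $\Omega_1$. After a standard mollification to reduce to the smooth strictly h-convex case (so that the variation formula applies), integrating $\frac{d}{dt}\widetilde{W}_k(\Omega_t)\geq 0$ over $t\in[0,1]$ yields $\widetilde{W}_k(\Omega_0)\leq\widetilde{W}_k(\Omega_1)$.

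The main obstacle is this last step: one must verify that $(1-t)u_0+tu_1$ remains the horospherical support function of an h-convex domain (the hyperbolic analog of the fact that nonnegative linear combinations of Euclidean support functions are support functions), and justify approximating a general h-convex domain by smooth strictly h-convex ones while preserving the inclusion. An alternative that bypasses the horospherical framework is to construct the interpolating family directly as a parabolic flow: run a suitable outward normal flow from $\partial\Omega_0$, verify via a tensor maximum principle that the condition $\kappa_i\geq 1$ is preserved, and stop the flow as it sweeps out $\Omega_1\setminus\Omega_0$.
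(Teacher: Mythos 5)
Your proposal has the right skeleton and matches the paper's strategy through the variation formula: derive $\frac{d}{dt}\widetilde W_k(\Omega_t)=\int_{M_t}E_k(\lambda)\,\eta\,d\mu_t$ (this is exactly Lemma~\ref{s2:lem3}, proved via the same binomial identity \eqref{s2:sk-3}), then connect $\Omega_0$ to $\Omega_1$ by a nested family of h-convex domains so that $\eta\geq 0$ and $E_k(\lambda)\geq 0$ force monotonicity.

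However, the step you correctly flag as ``the main obstacle'' is a genuine gap in the form you propose. Linear interpolation of the horospherical support function $u$ itself is \emph{not} what works; the paper (Proposition~\ref{prop:connect-h-convex}) interpolates $\varphi=\E^u$ linearly: $\varphi_t=(1-t)\varphi_0+t\varphi_1$. The reason this choice is essential is visible in the h-convexity criterion \eqref{eq:Ainphi},
\[
A_{jk}[\varphi]=\bar\nabla_j\bar\nabla_k\varphi-\frac{|\bar\nabla\varphi|^2}{2\varphi}\bar g_{jk}+\frac{\varphi-\varphi^{-1}}{2}\bar g_{jk}.
\]
The first term is linear in $\varphi$; the map $(x,p)\mapsto |p|^2/(2x)$ is jointly convex on $\{x>0\}$, so the second term is concave (with sign) under linear interpolation of $\varphi$; and $x\mapsto (x-x^{-1})/2$ is concave for $x>0$. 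Together these give $A[\varphi_t]\geq (1-t)A[\varphi_0]+tA[\varphi_1]$ (the paper computes the nonnegative remainder explicitly), so positive definiteness — hence strict h-convexity — is preserved along the path. If you instead set $u_t=(1-t)u_0+tu_1$ (equivalently $\varphi_t=\varphi_0^{1-t}\varphi_1^t$), the analogous decomposition of $A_{jk}$ written in terms of $u$ contains the indefinite quadratic-in-$\bar\nabla u$ piece $u_ju_k-\tfrac12|\bar\nabla u|^2\bar g_{jk}$, which is neither convex nor concave in $u$, and there is no comparable one-line argument that $A$ stays positive. So you should replace ``interpolate $u$'' by ``interpolate $\varphi=\E^u$''; once that is done, monotonicity $u_0\leq u_1\Leftrightarrow\varphi_0\leq\varphi_1$ still gives $\partial_t\varphi\geq 0$, hence a nested expanding family, and your integration of the first variation closes the argument exactly as in the paper (Corollary~\ref{s5:cor}). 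Your alternative suggestion of running a parabolic flow to connect the domains is avoidable here: the explicit static interpolation is simpler and avoids short-time existence and curvature-preservation questions entirely.
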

This property is not obvious from the definition \eqref{s1:Wk-td} and its proof will be given in \S \ref{sec:h-convex}. We will first investigate some of the properties of horospherically convex regions in hyperbolic space $\mathbb{H}^{n+1}$. In particular, for such regions we define a \emph{horospherical Gauss map}, which
is a map to the unit sphere, and we show that each horospherically convex region is completely described in terms of a scalar function $u$ on the sphere $\mathbb{S}^n$ which we call the \emph{horospherical
support function}. There are interesting formal similarities between this situation and that
of convex Euclidean bodies. We show that the h-convexity of a region $\Omega$ is equivalent to that the following matrix
\begin{equation*}
A_{ij} = \bar\nabla_j\bar\nabla_k\varphi-\frac{|\bar\nabla\varphi|^2}{2\varphi}\bar g_{ij}+\frac{\varphi-\varphi^{-1}}{2}\bar g_{ij}
\end{equation*}
on the sphere $\mathbb{S}^n$ is positive definite, where $\bar{g}_{ij}$ is the standard round metric on $\mathbb{S}^n$, $\varphi=\E^u$ and $u$ is the horospherical support function of $\Omega$. The shifted Weingarten matrix $\mathcal{W}-\mathrm{I}$ is related to the matrix $A_{ij}$ by 
\begin{equation}\label{eq:A-vs-W}
A_{ij} = \varphi^{-1}\left[\left({\mathcal W}-\mathrm{I}\right)^{-1}\right]_i^k\bar g_{kj}.
\end{equation}
Using this characterization of h-convex domains, for any two h-convex domains $\Omega_0$ and $\Omega_1$ with $\Omega_0\subset\Omega_1$ we can find a foliation of h-convex domains $\Omega_t$ which is expanding from $\Omega_0$ to $\Omega_1$. This can be used to prove Proposition \ref{s1:wk-prop} by computing the variation of $\widetilde{W}_k$. We expect that the description of horospherically convex regions which we develop here will be useful in further investigations beyond the scope of this paper.

The flow we will consider is the following:
\begin{equation}\label{flow-VMCF-2}
 \left\{\begin{aligned}
 \frac{\partial}{\partial t}X(x,t)=&~(\phi(t)-F(\mathcal{W}-\mathrm{I}))\nu(x,t),\\
 X(\cdot,0)=&~X_0(\cdot)
  \end{aligned}\right.
 \end{equation}
for smooth and strictly h-convex hypersurface in hyperbolic space, where $F$ is a smooth, symmetric, homegeneous of degree one function of the shifted Weingarten matrix $\mathcal{W}-\mathrm{I}=(h_i^j-\delta_i^j)$. For simplicity, we denote $S_{ij}=h_i^j-\delta_i^j$. Note that the eigenvalues of $(S_{ij})$ are the shifted principal curvatures $\lambda=(\lambda_1,\cdots,\lambda_n)=(\kappa_1-1,\cdots,\kappa_n-1)$. Again by a theorem of Schwarz \cite{Scharz75}, $F(\mathcal{W}-\mathrm{I})=f(\lambda)$, where $f$ is a smooth symmetric function of $n$ variables $\lambda=(\lambda_1,\cdots,\lambda_n)$. We choose the global term $\phi(t)$ in \eqref{flow-VMCF-2} as
 \begin{equation}\label{s1:phit-2}
  \phi(t)=\left(\int_{M_t}E_l(\lambda)d\mu_t\right)^{-1}\int_{M_t}E_l(\lambda)Fd\mu_t,\quad l=0,\cdots,n
\end{equation}
such that $\widetilde{W}_l(\Omega_t)$ remains constant, where $\Omega_t$ is the domain enclosed by the evolving hypersurface $M_t$.

We will prove the following result for the flow \eqref{flow-VMCF-2} with $\phi(t)$ given in \eqref{s1:phit-2}.
\begin{thm}\label{thm1-5}
Let $n\geq 2$ and $X_0: M^n\to \mathbb{H}^{n+1}$ be a smooth embedding such that $M_0=X_0(M)$ is a smooth closed and strictly h-convex hypersurface in $\mathbb{H}^{n+1}$. If $f$ is a smooth, symmetric, increasing and homogeneous of degree one function, and either
\begin{itemize}
  \item[(i)] $f$ is concave and f approaches zero on the boundary of the positive cone $\Gamma_+$, or
  \item[(ii)] $f$ is concave and inverse concave, or
  \item[(iii)] $f$ is inverse concave and its dual function $f_*$ approaches zero on the boundary of positive cone $\Gamma_+$, or
  \item[(iv)] $n=2$,
\end{itemize}
then the flow \eqref{flow-VMCF-2} with the global term $\phi(t)$ given by \eqref{s1:phit-2} has a smooth solution $M_t$ for all time $t\in [0,\infty)$, and $M_t$ is strictly h-convex for any $t>0$ and converges smoothly and exponentially to a geodesic sphere of radius $r_{\infty}$ determined by $\widetilde{W}_l(B(r_{\infty}))=\widetilde{W}_l(\Omega_0)$  as $t\to\infty$.
\end{thm}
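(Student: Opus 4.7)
The plan is to carry out the standard program for quermassintegral-preserving curvature flows, organised around the horospherical support function $u$ on $\mathbb{S}^n$ and the identification \eqref{eq:A-vs-W} between the shifted Weingarten matrix and the tensor $A_{ij}$. Short-time existence reduces to a scalar quasilinear parabolic equation for $u$ on $\mathbb{S}^n$, parabolic as long as $A>0$; the task is to establish uniform a priori estimates on $[0,T)$ sufficient to continue the flow indefinitely and to deduce smooth exponential convergence.

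First I would verify that $\widetilde W_l(\Omega_t)$ is preserved by the choice \eqref{s1:phit-2} of $\phi(t)$, and identify a companion modified quermassintegral (for example $\widetilde W_{l+1}(\Omega_t)$) that is monotone under the flow. Combined with Proposition \ref{s1:wk-prop}, this supplies the framework for both the $C^0$ estimate and the final convergence. An Alexandrov moving-plane argument together with Proposition \ref{s1:wk-prop} then yields uniform positive bounds on the inner and outer radii of $\Omega_t$; here strict h-convexity simplifies matters considerably, since h-convex regions automatically satisfy $\rho_+\leq c(\rho_-+\rho_-^{1/2})$. Preservation of strict h-convexity itself is established by a tensor maximum principle applied to $S_{ij}=h_i^j-\delta_i^j$, for which the concavity or inverse-concavity hypotheses in (i)--(iv) supply the correct sign on the reaction terms at a first zero eigenvalue.

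The crux is the $C^2$ estimate. Tso's auxiliary quantity $F/(u-c)$ yields an upper bound on the speed from the inner-radius lower bound, after which the pinching of the shifted principal curvatures must be handled case by case. Under (i), concavity of $f$ together with $f\to 0$ on $\partial\Gamma_+$ gives an upper bound on $\lambda_{\max}$ via a standard maximum principle argument, and the speed bound controls $\lambda_{\min}$ from below. Under (iii), the symmetric estimate follows by the dual argument applied to $f_*$ and $F^{-1}$. Case (ii) combines both hypotheses to produce two-sided pinching directly through an Andrews-type test function involving $f_*$ and $\lambda_{\min}$. Case (iv) is handled in dimension two by explicit maximum principle arguments on a function of $(\lambda_1,\lambda_2)$, in the spirit of the proof of Theorem \ref{thm1-1}(ii). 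Once the shifted principal curvatures lie in a compact subset of $\Gamma_+$, the equation is uniformly parabolic and Krylov--Safonov together with Evans--Krylov (using either concavity or its inverse-concavity version) give $C^{2,\alpha}$ bounds, upgraded to $C^\infty$ by Schauder bootstrap.

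Long-time existence follows by continuation, and for the smooth exponential convergence I would extract subsequential smooth convergence to a geodesic sphere with $\widetilde W_l=\widetilde W_l(\Omega_0)$ from the monotone companion functional and the uniform estimates, upgrade to full smooth convergence via the Alexandrov reflection technique of \cite{And-Wei2017-2}, and finally obtain the exponential rate by linearising the $u$-equation around the limit sphere, where the linearisation is a self-adjoint operator on $\mathbb{S}^n$ with a uniform spectral gap once the kernel from hyperbolic isometries is quotiented out. I expect the pinching step to be the main obstacle: the four hypotheses each require a distinct choice of test function, and because $f$ depends on the shifted principal curvatures $\lambda_i=\kappa_i-1$ rather than on the $\kappa_i$ themselves, one works throughout near the boundary $\{\lambda_i=0\}$ of the positive cone, so keeping all $\lambda_i$ uniformly positive --- that is, preserving strict h-convexity quantitatively --- is the most delicate ingredient of the argument.
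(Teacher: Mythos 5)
Your proposal lays out the same program as the paper: establish case-by-case pinching of the shifted principal curvatures via maximum-principle arguments, bound the inner and outer radii using the constancy of $\widetilde{W}_l$ together with Proposition~\ref{s1:wk-prop} and the h-convex width estimate, obtain two-sided speed bounds via Tso's auxiliary function and the Krylov--Safonov Harnack inequality, and conclude by regularity bootstrap, continuation, and the Alexandrov-reflection convergence argument of \cite{And-Wei2017-2}. Two details worth knowing: the paper's pinching tensors in cases (ii) and (iii) are $T_{ij}=S_{ij}-\varepsilon\,\mathrm{tr}(S)\,\delta_i^j$ and $T_{ij}=S_{ij}-\varepsilon F\,\delta_i^j$ respectively (rather than a quantity built directly from $f_*$ and $\lambda_{\min}$, whose non-smoothness at multiplicity points would cause trouble), and the shape estimate needs no moving-plane argument --- preservation of $\widetilde{W}_l$, the monotonicity in Proposition~\ref{s1:wk-prop}, and the h-convex inequality $\rho_+\leq c(\rho_-+\rho_-^{1/2})$ already give the uniform two-sided radius bounds.
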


Constrained curvature flows in hyperbolic space by homogeneous of degree one, concave and inverse concave function of the principal curvatures were studied by Makowski \cite{Mak2012} and Wang and Xia in \cite{WX}. The quermassintegral preserving flow by any positive power of a homogeneous of degree one function of the principal curvatures, which is inverse concave and its dual function $f_*$ approaches zero on the boundary of positive cone $\Gamma_+$, was studied recently by the first and the third authors in \cite{And-Wei2017-2}. Note that the speed function $f$ of the flow \eqref{flow-VMCF-2} in Theorem \ref{thm1-5} is not a homogeneous function of the principal curvatures $\kappa_i$ and there are essential differences in the analysis compared with the previously mentioned work \cite{And-Wei2017-2,Mak2012,WX}.

The key step in the proof of Theorem \ref{thm1-5} is a pinching estimate for the shifted principal curvatures $\lambda_i$. That is, we will show that the ratio of the largest shifted principal curvature $\lambda_n$ to the smallest shifted principal curvature $\lambda_1$ is controlled by its initial value along the flow \eqref{flow-VMCF-2}. For the proof, we adapt methods from the proof of pinching estimates of the principal curvatures for contracting curvature flows \cite{And1994,And2007,And2010,Andrews-McCoy-Zheng} and the constrained curvature flows in Euclidean space  \cite{McC2005,Mcc2017}.  In particular, in the case (iii) we define the tensor $T_{ij}=S_{ij}-\varepsilon F\delta_i^j$ and show that the positivity of $T_{ij}$ is preserved by applying the tensor maximum principle (proved by the first author in \cite{And2007}). The inverse concavity is used to estimate the sign of the gradient terms. This case is similar with the pinching estimate for the contracting curvature flow in Euclidean case in \cite[Lemma 11]{Andrews-McCoy-Zheng}. Although the proof there is given in terms of the Gauss map parametrisation of the convex solutions of the flow in Euclidean space, which is not available in hyperbolic space, we can deal with the gradient terms directly using the inverse concavity property of $f$.

To prove Theorem \ref{thm1-5},  we next show that the inner radius and outer radius of the enclosed domain $\Omega_t$ of the evolving hypersurface $M_t$ satisfies a uniform estimate $0<C^{-1}<\rho_-(t)\leq \rho_+(t)\leq C$ for some positive constant $C$. This relies on the preservation of $\widetilde{W}_l(\Omega_t)$ and the monotonicity of $\widetilde{W}_l$ under the inclusion of h-convex domains stated in Proposition \ref{s1:wk-prop}. With the estimate on the inner radius and outer radius, the technique of Tso \cite{Tso85} yields the upper bound on $F$ and the Harnack inequality of Krylov and Safonov \cite{KS81} yields the lower bound on $F$. The pinching estimate then gives the estimate on the shifted principal curvatures $\lambda_i$. The long time existence and the convergence of the flow then follows by a standard argument.

The result in Theorem \ref{thm1-5} is useful in the study of the geometry of hypersurfaces. The first application of Theorem \ref{thm1-5} is the following rigidity result.
\begin{cor}\label{s1:cor-2}
Let $M$ be a smooth, closed and strictly h-convex hypersurface in $\mathbb{H}^{n+1}$ with principal curvatures $\kappa=(\kappa_1,\cdots,\kappa_n)$ satisfying $f(\lambda)=C$
for some constant $C>0$, where $\lambda=(\lambda_1,\cdots,\lambda_i)$ with $\lambda_i=\kappa_i-1$ and $f$ is a symmetric function satisfying the condition in Theorem \ref{thm1-5}. Then $M$ is a geodesic sphere.
\end{cor}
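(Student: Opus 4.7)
The plan is to realise $M$ as the initial datum for the flow \eqref{flow-VMCF-2}, recognise it as a stationary solution, and then read off the conclusion from the convergence statement of Theorem \ref{thm1-5}.

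Fix any admissible $l\in\{0,1,\dots,n\}$ (for concreteness, take $l=0$, so that $\widetilde W_0=\text{Vol}$ is preserved), and run the flow \eqref{flow-VMCF-2} with speed function $f$ and with initial embedding $X_0$ parametrising $M$. Under the assumption $f(\lambda)\equiv C$ on $M$, the averaged quantity
\begin{equation*}
\phi(0)=\left(\int_{M}E_l(\lambda)\,d\mu\right)^{-1}\int_{M}E_l(\lambda)\,F\,d\mu~=~C
\end{equation*}
is exactly $C$, so the initial normal speed $\phi(0)-F(\mathcal{W}-\mathrm{I})$ vanishes identically. Consequently the constant-in-time map $X(\cdot,t)\equiv X_0(\cdot)$ is a smooth solution of \eqref{flow-VMCF-2}: indeed, at every later time the hypersurface is still $M$, so $F$ is still the constant $C$, forcing $\phi(t)\equiv C$ and keeping the speed identically zero.

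Next I would invoke uniqueness. Because $M$ is strictly h-convex and $f$ is strictly increasing, the linearisation of the geometric operator $X\mapsto F(\mathcal{W}-\mathrm{I})$ is strictly elliptic on a neighbourhood of $M$ in the space of strictly h-convex embeddings; standard parabolic theory then gives uniqueness of smooth solutions to \eqref{flow-VMCF-2} up to tangential reparametrisation. Combined with the previous paragraph, this shows that the solution issuing from $M$ is $M_t=M$ for every $t\ge 0$. On the other hand, Theorem \ref{thm1-5} applies to $M_0=M$ (strictly h-convex, and $f$ satisfies one of (i)--(iv) by hypothesis), so $M_t$ converges smoothly and exponentially to a geodesic sphere $B(r_\infty)$ as $t\to\infty$. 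Since $M_t\equiv M$, the limit equals $M$, and therefore $M$ is a geodesic sphere.

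The only delicate point is the uniqueness step: one must know that the flow \eqref{flow-VMCF-2}, starting from a strictly h-convex hypersurface, has a unique solution in the smooth category, so that the stationary candidate $X\equiv X_0$ is indeed the solution. This is however routine for strictly parabolic quasilinear systems, and no additional curvature bounds or pinching estimates are needed for the corollary once Theorem \ref{thm1-5} is in hand.
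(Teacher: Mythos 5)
Your argument is correct and is surely the intended proof: the hypothesis $f(\lambda)\equiv C$ forces the normal speed $\phi(t)-F$ of the flow \eqref{flow-VMCF-2} to vanish for all $t$ (regardless of the choice of $l$, since $\phi$ reduces to the constant $C$ whenever $F\equiv C$), so $M_t\equiv M$ is a solution, and Theorem \ref{thm1-5} then forces the time-independent hypersurface $M$ to be a geodesic sphere. One small streamlining worth noting: the uniqueness step you flag as delicate can be sidestepped entirely, because the a priori estimates and the Alexandrov-reflection convergence argument in the proof of Theorem \ref{thm1-5} are established for \emph{any} smooth solution of \eqref{flow-VMCF-2} with the given initial data; since the stationary map $X(\cdot,t)\equiv X_0$ is such a solution, it converges to a geodesic sphere, and you never need to rule out other branches. (Also, be careful with the phrase ``standard parabolic theory gives uniqueness'': \eqref{flow-VMCF-2} is nonlocal through $\phi(t)$, so the uniqueness statement is that for volume/quermassintegral-preserving flows, which is established in the cited literature but is not purely the local quasilinear parabolic Cauchy theory.)
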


The second application of Theorem \ref{thm1-5} is a new class of Alexandrov-Fenchel type inequalities between quermassintegrals of h-convex hypersurface in hyperbolic space.
\begin{cor}\label{s1:cor-3}
Let $M=\partial\Omega$ be a smooth, closed and strictly h-convex hypersurface in $\mathbb{H}^{n+1}$. Then for any $0\leq l<k\leq n$, there holds
\begin{equation}\label{s1:AF}
 \widetilde{W}_k(\Omega)~\geq~ \tilde{f}_k\circ \tilde{f}_l^{-1}(\widetilde{W}_l(\Omega)),
\end{equation}
with equality holding if and only if $\Omega$ is a geodesic ball.  Here the function $\tilde{f}_k: \mathbb{R}_+\to \mathbb{R}_+$ is defined by $\tilde{f}_k(r)=\widetilde{W}_k(B(r))$, which is an  increasing function by Proposition \ref{s1:wk-prop}. $\tilde{f}_l^{-1}$ is the inverse function of $\tilde{f}_l$.
\end{cor}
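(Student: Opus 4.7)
The plan is to deduce \eqref{s1:AF} by running the flow \eqref{flow-VMCF-2} with a carefully chosen speed so that Theorem \ref{thm1-5} applies, $\widetilde{W}_l$ is preserved, and $\widetilde{W}_k$ is monotone non-increasing. The natural choice is
\[
F(\lambda)=\Bigl(\frac{E_k(\lambda)}{E_l(\lambda)}\Bigr)^{\!\frac{1}{k-l}},
\]
a smooth, positive, symmetric, inverse concave, homogeneous degree one function on $\Gamma_+$. Granting that $F$ meets one of the structural hypotheses of Theorem \ref{thm1-5}, the flow starting from $M_0=\partial\Omega$ with this speed and global term $\phi(t)$ given by \eqref{s1:phit-2} exists for all time, preserves strict h-convexity, and converges smoothly and exponentially to the geodesic ball $B(r_\infty)$ with $\widetilde{W}_l\bigl(B(r_\infty)\bigr)=\widetilde{W}_l(\Omega)$; in particular $r_\infty=\tilde{f}_l^{-1}(\widetilde{W}_l(\Omega))$.

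The key computation is the evolution of $\widetilde{W}_k(\Omega_t)$. Combining the variation formulas for the standard quermassintegrals $W_i$ (Lemma \ref{s2:lem2}) with the Newton-type identity $\sigma_i(\kappa)=\sigma_i(\lambda+1)=\sum_{j=0}^{i}\binom{n-j}{i-j}\sigma_j(\lambda)$, the alternating sum defining $\widetilde{W}_k$ in \eqref{s1:Wk-td} telescopes to
\[
\frac{d}{dt}\widetilde{W}_k(\Omega_t)=c_{n,k}\int_{M_t} E_k(\lambda)\bigl(\phi(t)-F\bigr)\,d\mu_t
\]
for some positive constant $c_{n,k}$. Inserting the definition \eqref{s1:phit-2} of $\phi(t)$ and writing $E_k(\lambda)=E_l(\lambda)F^{k-l}$, the right-hand side equals a negative multiple of
\[
\int_{M_t}\!E_l\,d\mu_t\,\int_{M_t}\!E_l F^{k-l+1}d\mu_t\;-\;\int_{M_t}\!E_l F\,d\mu_t\,\int_{M_t}\!E_l F^{k-l}d\mu_t,
\]
which is non-negative by the integral Chebyshev inequality applied to the monotonically related pair $F$ and $F^{k-l}$ with respect to the positive weight $E_l(\lambda)\,d\mu_t$. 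Therefore $\widetilde{W}_k(\Omega_t)$ is monotone non-increasing.

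Combining the monotonicity with the smooth convergence to $B(r_\infty)$ gives
\[
\widetilde{W}_k(\Omega)=\widetilde{W}_k(\Omega_0)\;\geq\;\lim_{t\to\infty}\widetilde{W}_k(\Omega_t)=\widetilde{W}_k(B(r_\infty))=\tilde{f}_k\circ\tilde{f}_l^{-1}\bigl(\widetilde{W}_l(\Omega)\bigr),
\]
which is \eqref{s1:AF}. For the rigidity, equality forces $\widetilde{W}_k(\Omega_t)$ to be constant in $t$, so the Chebyshev step must be saturated at every time, which in turn forces $F=(E_k/E_l)^{1/(k-l)}$ to be pointwise constant on each $M_t$; letting $t\to 0^+$, $F$ is constant on $\partial\Omega$, and Corollary \ref{s1:cor-2} concludes that $\Omega$ must be a geodesic ball.

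The main obstacle I expect is verifying that the quotient speed $F=(E_k/E_l)^{1/(k-l)}$ fits the appropriate structural hypothesis of Theorem \ref{thm1-5} for each admissible triple $(n,l,k)$; this reduces to the classical inverse concavity of $F$ on $\Gamma_+$ together with a careful boundary analysis of the dual $f_*$, possibly augmented by cases (ii) and (iv) when $l=0$ or $n=2$. The remaining ingredient, the telescoping derivative identity for $\widetilde{W}_k$, is a routine but careful bookkeeping exercise tying the alternating binomial weights in \eqref{s1:Wk-td} to the shift $\kappa=\lambda+1$; once these structural points are in place, the Chebyshev inequality makes the monotonicity of $\widetilde{W}_k$ automatic and the corollary follows.
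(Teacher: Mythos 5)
Your proposal is correct and follows essentially the same route as the paper: run the flow \eqref{flow-VMCF-2} with $F=(E_k(\lambda)/E_l(\lambda))^{1/(k-l)}$ (which is both concave and inverse concave, so case (ii) of Theorem \ref{thm1-5} applies directly), preserve $\widetilde W_l$, show $\widetilde W_k$ is monotone non-increasing via Lemma \ref{s2:lem3}, and pass to the limit using the convergence and Corollary \ref{s1:cor-2} for the rigidity. The only cosmetic differences are that Lemma \ref{s2:lem3} already gives the variation $\frac{d}{dt}\widetilde W_k = \int_{M_t} E_k(\lambda)(\phi-F)\,d\mu_t$ with constant exactly $1$ (so your undetermined $c_{n,k}$ is unnecessary), and the paper justifies the sign by the H\"older inequality rather than your Chebyshev sum inequality, though the two are equivalent in this setting.
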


The inequality \eqref{s1:AF} can be obtained by applying Theorem \ref{thm1-5} with $f$ chosen as
\begin{equation}\label{s1:F}
  f=\left(\frac{E_k(\lambda)}{E_l(\lambda)}\right)^{\frac 1{k-l}},\quad 0\leq l<k\leq n
\end{equation}
in the flow \eqref{flow-VMCF-2}. We see that along the flow \eqref{flow-VMCF-2} with such $f$, the modified quermassintegral $\widetilde{W}_l(\Omega_t)$ remains a constant and $\widetilde{W}_k(\Omega_t)$ is monotone decreasing in time by the H\"{o}lder inequality.  In fact, by Lemma \ref{s2:lem3} the modified quermassintegral evolves by
\begin{align}\label{s1:eq2}
  \frac d{dt}\widetilde{W}_k(\Omega_t)~=&~ \int_{M_t}E_k(\lambda)\left(\phi(t)-\left(\frac{E_k(\lambda)}{E_l(\lambda)}\right)^{\frac 1{k-l}}\right)d\mu_t.
\end{align}
Applying the H\"{o}lder inequality to the equation \eqref{s1:eq2} yields that $\widetilde{W}_k(\Omega_t)$ is monotone decreasing in time unless $E_k(\lambda)=C E_l(\lambda)$ on $M_t$ (which is equivalent to that $M_t$ is a geodesic sphere by Corollary \ref{s1:cor-2}). Since the flow exists for all time and converges to a geodesic sphere $B_r$, the inequality \eqref{s1:AF} follows from the monotonicity of $\widetilde{W}_k(\Omega_t)$ and the preservation of $\widetilde{W}_l(\Omega_t)$.

\begin{rem}
We remark that the inequalities \eqref{s1:AF} are new and can be viewed as an improvement of the inequalities \eqref{s1:AF-WX}. For example, the inequality \eqref{s1:AF} with $l=0$ implies that
\begin{equation}\label{s1:AF-rem1}
  \sum_{i=0}^k(-1)^{k-i}\binom ki\biggl(W_i(\Omega)-f_i\circ f_0^{-1}(W_0(\Omega))\biggr)~\geq~0.
\end{equation}
By induction on $k$, \eqref{s1:AF-rem1} implies that each $W_i(\Omega)-f_i\circ f_0^{-1}(W_0(\Omega))$ is nonnegative for $h$-convex domains. Thus our inequalities \eqref{s1:AF} imply that the linear combinations of $W_i(\Omega)-f_i\circ f_0^{-1}(W_0(\Omega))$ as in \eqref{s1:AF-rem1} are also nonnegative for h-convex domains.
\end{rem}

\begin{ack}
The second author is grateful to the Mathematical Sciences Institute at the Australian National University for its hospitality during his visit, when some of this work was completed.
\end{ack}

\section{Preliminaries}\label{sec:pre}
In this section we collect some properties of smooth symmetric functions $f$ of $n$ variables, and recall the evolution equations of geometric quantities along the flows \eqref{flow-VMCF} and \eqref{flow-VMCF-2}.

\subsection{Properties of symmetric functions}
For a smooth symmetric function $F(A)=f(\kappa(A))$, where $A=(A_{ij})\in \mathrm{Sym}(n)$ is symmetric matrix and $\kappa(A)=(\kappa_1,\cdots,\kappa_n)$ give the eigenvalues of $A$, we denote by $\dot{F}^{ij}$ and $\ddot{F}^{ij,kl}$ the first and second derivatives of $F$ with respect to the components of its argument, so that
\begin{equation*}
  \frac{\partial}{\partial s}F(A+sB)\bigg|_{s=0}=\dot{F}^{ij}(A)B_{ij}
\end{equation*}
and
\begin{equation*}
  \frac{\partial^2}{\partial s^2}F(A+sB)\bigg|_{s=0}=\ddot{F}^{ij,kl}(A)B_{ij}B_{kl}
\end{equation*}
for any two symmetric matrixs $A,B$.  We also use the notation
\begin{equation*}
  \dot{f}^i(\kappa)=\frac{\partial f}{\partial \kappa_i}(\kappa),\quad  \ddot{f}^{ij}(\kappa)=\frac{\partial^2 f}{\partial \kappa_i\partial \kappa_j}(\kappa).
\end{equation*}
for the first and the second derivatives of $f$ with respect to $\kappa$. At any diagonal $A$ with distinct eigenvalues $\kappa=\kappa(A)$, the first derivatives of $F$ satisfy
\begin{equation*}
  \dot{F}^{ij}(A)~=~\dot{f}^i(\kappa)\delta_{ij}
\end{equation*}
and the second derivative of $F$ in direction $B\in \mathrm{Sym}(n)$ is given in terms of $\dot{f}$ and $\ddot{f}$ by  (see \cite{And2007}):
\begin{equation}\label{s2:F-ddt}
  \ddot{F}^{ij,kl}(A)B_{ij}B_{kl}=\sum_{i,j}\ddot{f}^{ij}(\kappa)B_{ii}B_{jj}+2\sum_{i>j}\frac{\dot{f}^i(\kappa)-\dot{f}^j(\kappa)}{\kappa_i-\kappa_j}B_{ij}^2.
\end{equation}
This formula makes sense as a limit in the case of any repeated values of $\kappa_i$.

From the equation \eqref{s2:F-ddt}, we have
\begin{lem}\label{s2:lem0}
Suppose $A$ has distinct eigenvalues $\kappa=\kappa(A)$.  Then $F$ is concave at $A$ if and only if $f$ is concave at $\kappa$ and
\begin{equation}\label{s2:f-conc}
  \left(\dot{f}^k-\dot{f}^l\right)(\kappa_k-\kappa_l)~\leq ~0,\quad \forall~k\neq l.
\end{equation}
\end{lem}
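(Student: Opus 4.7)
The plan is to read off both directions directly from the formula \eqref{s2:F-ddt}, which decomposes $\ddot F^{ij,kl}(A)B_{ij}B_{kl}$ into a ``diagonal'' quadratic form in the entries $B_{ii}$ and a sum of ``off-diagonal'' squares $B_{ij}^2$ with explicit coefficients. Since the two groups of variables $\{B_{ii}\}$ and $\{B_{ij}:i>j\}$ are independent, the concavity condition $\ddot F^{ij,kl}B_{ij}B_{kl}\le 0$ for all symmetric $B$ splits cleanly into a condition on each group.

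For the forward direction, I would first test with a diagonal perturbation, choosing $B$ so that $B_{ij}=0$ whenever $i\ne j$. Then the off-diagonal sum in \eqref{s2:F-ddt} is absent and the inequality reduces to $\sum_{i,j}\ddot f^{ij}(\kappa)B_{ii}B_{jj}\le 0$ for arbitrary real numbers $B_{ii}$, which is exactly concavity of $f$ at $\kappa$. Next I would test with a single off-diagonal entry: fix $k\ne l$ and let $B$ be the symmetric matrix with $B_{kl}=B_{lk}=1$ and all other entries zero. Then the diagonal term in \eqref{s2:F-ddt} vanishes and we are left with $2(\dot f^k-\dot f^l)/(\kappa_k-\kappa_l)\le 0$, which upon clearing the denominator gives precisely \eqref{s2:f-conc}. (The assumption that the eigenvalues are distinct is used here to make the denominator nonzero and the formula valid pointwise rather than as a limit.)

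For the converse, assume $f$ is concave at $\kappa$ and that \eqref{s2:f-conc} holds. Given any symmetric $B$, the first sum on the right of \eqref{s2:F-ddt} is $\le 0$ by concavity of $f$ applied to the vector $(B_{11},\ldots,B_{nn})$, while each coefficient $(\dot f^i-\dot f^j)/(\kappa_i-\kappa_j)$ in the off-diagonal sum is $\le 0$ by \eqref{s2:f-conc}, so the off-diagonal sum is also $\le 0$. Adding the two nonpositive contributions gives $\ddot F^{ij,kl}(A)B_{ij}B_{kl}\le 0$ for every symmetric $B$, which is concavity of $F$ at $A$.

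There is no real obstacle in this argument; the only point requiring any care is verifying that testing on diagonal $B$ alone and on single off-diagonal $B$ alone really decouples the two quadratic forms, which is immediate from the structure of \eqref{s2:F-ddt} because no cross terms of the form $B_{ii}B_{kl}$ with $k\ne l$ appear. All the substantive content has already been absorbed into the derivation of \eqref{s2:F-ddt} itself.
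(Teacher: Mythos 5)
Your proof is correct and is essentially the argument the paper has in mind: the paper simply states "From the equation \eqref{s2:F-ddt}, we have" and leaves the lemma as an immediate consequence, and your decoupling of the diagonal and off-diagonal contributions (testing separately with diagonal $B$ and with a single off-diagonal pair) is exactly how one reads both implications off \eqref{s2:F-ddt}. The one point worth noting, which you mention correctly, is that the distinctness of the eigenvalues is what makes the coefficient $(\dot f^k-\dot f^l)/(\kappa_k-\kappa_l)$ well defined pointwise, so that the sign condition \eqref{s2:f-conc} is genuinely equivalent to non-positivity of that coefficient.
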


In this paper, we also need the inverse concavity property of $f$ in many cases. We include the properties of inverse concave function in the following lemma.
 \begin{lem}[\cite{And2007,And-Wei2017-2}]\label{s2:lem1}
\begin{itemize}
 \item[(i)]If $f$ is inverse concave, then
 \begin{equation}\label{s2:f-invcon-1}
   \sum_{k,l=1}^n\ddot{f}^{kl}y_ky_l+2\sum_{k=1}^n\frac {\dot{f}^k}{\kappa_k}y_k^2~\geq ~2f^{-1}(\sum_{k=1}^n\dot{f}^ky_k)^2
 \end{equation}
 for any $y=(y_1,\cdots,y_n)\in \mathbb{R}^n$, and
\begin{equation}\label{s2:f-invcon}
\frac{\dot{f}^k-\dot{f}^l}{\kappa_k-\kappa_l}+\frac{\dot{f}^k}{\kappa_l}+\frac{\dot{f}^l}{\kappa_k}\geq~0,\quad \forall~k\neq l.
\end{equation}
\item[(ii)] If $f=f(\kappa_1,\cdots,\kappa_n)$ is inverse concave, then
  \begin{equation}\label{s2:f-invcon-0}
    \sum_{i=1}^n\dot{f}^i\kappa_i^2~\geq~f^2.
  \end{equation}
\end{itemize}
 \end{lem}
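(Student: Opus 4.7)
The plan is to deduce both parts from the concavity of the dual function $f_*(z) := f(z_1^{-1},\ldots,z_n^{-1})^{-1}$ by writing the quantities appearing in the lemma in terms of derivatives of $f_*$ evaluated at $z=\kappa^{-1}$. A direct chain-rule computation yields
$$\dot f_*^i(z) \;=\; f^{-2}\,\dot f^i\kappa_i^2, \qquad \ddot f_*^{ij}(z) \;=\; 2f^{-3}\dot f^i\dot f^j\kappa_i^2\kappa_j^2 \;-\; f^{-2}\ddot f^{ij}\kappa_i^2\kappa_j^2 \;-\; 2f^{-2}\dot f^i\kappa_i^3\delta_{ij},$$
where the right-hand-side quantities are evaluated at $\kappa$.

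For the first inequality in (i), I would contract $\ddot f_*^{ij}(z)$ with an arbitrary vector $w$, invoke concavity of $f_*$ to obtain $\sum_{ij}\ddot f_*^{ij}(z)w_iw_j\le 0$, make the substitution $y_i = \kappa_i^2 w_i$, and multiply through by $-f^2$. The three resulting terms rearrange into exactly \eqref{s2:f-invcon-1}.

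For the off-diagonal inequality \eqref{s2:f-invcon}, I would apply the Hessian formula \eqref{s2:F-ddt} to the induced matrix function $F_*$: concavity forces the off-diagonal coefficients $(\dot f_*^i-\dot f_*^j)/(z_i-z_j)$ to be nonpositive for $i\ne j$. Substituting $z_i=\kappa_i^{-1}$ and the formula for $\dot f_*^i$, a short algebraic simplification reduces this to $(\dot f^i\kappa_i^2-\dot f^j\kappa_j^2)/(\kappa_i-\kappa_j)\ge 0$. Combined with the elementary identity
$$\frac{\dot f^k-\dot f^l}{\kappa_k-\kappa_l} + \frac{\dot f^k}{\kappa_l} + \frac{\dot f^l}{\kappa_k} \;=\; \frac{\dot f^k\kappa_k^2-\dot f^l\kappa_l^2}{\kappa_k\kappa_l(\kappa_k-\kappa_l)},$$
verified by clearing denominators, this yields the claimed inequality.

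For part (ii), I would use that $f_*$ is concave and positively homogeneous of degree one, so the standard subgradient inequality together with Euler's identity $\nabla f_*(z)\cdot z = f_*(z)$ gives $\nabla f_*(z)\cdot z'\ge f_*(z')$ for all $z, z'$ in the positive cone. Evaluating at $z'=(1,\ldots,1)$, using the normalization $f(1,\ldots,1)=1$, and inserting the formula for $\dot f_*^i$ gives $f^{-2}\sum_i\dot f^i\kappa_i^2\ge 1$, which rearranges to \eqref{s2:f-invcon-0}. The only real obstacle is bookkeeping---correctly assembling $\ddot f_*^{ij}$ and tracking signs in the substitution---after which each assertion follows directly from the concavity of $f_*$.
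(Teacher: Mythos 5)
Your proposal is correct, and since the paper itself cites \cite{And2007,And-Wei2017-2} for this lemma rather than proving it, there is no internal proof to compare against; your route through the dual function $f_*$ is exactly the standard one used in those references. All three steps check out. The chain-rule formulas for $\dot f_*^i$ and $\ddot f_*^{ij}$ are right, and the substitution $y_i=\kappa_i^2 w_i$ followed by multiplying by $-f^2$ does reproduce \eqref{s2:f-invcon-1} term by term. For \eqref{s2:f-invcon}, the nonpositivity of $(\dot f_*^k-\dot f_*^l)/(z_k-z_l)$ follows already from the concavity and symmetry of $f_*$ alone (applying the gradient inequality to the path that transposes $z_k$ and $z_l$), so you do not actually need to pass through the matrix function $F_*$; in any case, your identity $\bigl(\frac{\dot f^k-\dot f^l}{\kappa_k-\kappa_l}+\frac{\dot f^k}{\kappa_l}+\frac{\dot f^l}{\kappa_k}\bigr)\kappa_k\kappa_l(\kappa_k-\kappa_l)=\dot f^k\kappa_k^2-\dot f^l\kappa_l^2$ is elementary and verifies the reduction. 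For part (ii), the subgradient-plus-Euler argument gives $\sum_i\dot f_*^i(z)\geq f_*(\mathbf{1})=1/f(\mathbf{1})$, whence $\sum_i\dot f^i\kappa_i^2\geq f^2/f(\mathbf{1})$; you correctly flag that the normalization $f(\mathbf{1})=1$ from Assumption~\ref{s1:Asum} is what turns this into $\sum_i\dot f^i\kappa_i^2\geq f^2$, and that assumption is in force throughout the paper, so the argument is complete.
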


\subsection{Evolution equations}
Along any smooth flow
\begin{equation}\label{s2:flow}
  \frac{\partial}{\partial t}X(x,t)=~\varphi(x,t)\nu(x,t)
\end{equation}
of hypersurfaces in hyperbolic space $\mathbb{H}^{n+1}$, where $\varphi$ is a smooth function on the evolving hypersurfaces $M_t=X(M^n,t)$, we have the following evolution equations on the induced metric $g_{ij}$, the induced area element $d\mu_t$ and the Weingarten matrix $\mathcal{W}=(h_i^j)$ of $M_t$:
\begin{align}
  \frac{\partial}{\partial t}g_{ij} =&~ 2\varphi h_{ij} \label{evl-g}\\
    \frac{\partial}{\partial t}d\mu_t=& ~nE_1 \varphi d\mu_t\label{evl-dmu}\\
  \frac{\partial}{\partial t}h_i^j=&~-\nabla^j\nabla_i\varphi-\varphi (h_i^kh_k^j-\delta_i^j)\label{evl-h}
\end{align}
From the evolution equations \eqref{evl-dmu} and \eqref{evl-h}, we can derive the evolution equation of the curvature integral $V_{n-k}$:
\begin{align}\label{s2:evl-Vk}
   \frac d{dt}V_{n-k}(\Omega_t)=&\frac d{dt}\int_{M_t}E_kd\mu_t\nonumber\\
   =&\int_{M_t}\left(\frac{\partial}{\partial t} E_k+nE_1E_k\varphi\right)d\mu_t\nonumber\\
   =& \int_{M_t}\left(-\frac{\partial E_k}{\partial h_i^j}\nabla^j\nabla_i\varphi -\varphi \frac{\partial E_k}{\partial h_i^j}(h_i^kh_k^j-\delta_i^j)+nE_1E_k\varphi\right)d\mu_t\displaybreak[0]\nonumber\\
  = & \int_{M_t}\varphi \biggl((n-k)E_{k+1}+kE_{k-1}\biggr)d\mu_t,
\end{align}
where we used integration by part and the fact that $\frac{\partial E_k}{\partial h_i^j}$ is divergence free. Since the quermassintegrals are related to the curvature integrals by the equations \eqref{s1:quermass-1} and \eqref{s1:quermass-2}, applying an induction argument to the equation \eqref{s2:evl-Vk} yields that
\begin{lem}[cf.\cite{And-Wei2017-2,WX}]\label{s2:lem2}
Along the flow \eqref{s2:flow}, the quermassintegral $W_k$ of the evolving domain $\Omega_t$ satisfies
\begin{equation*}
  \frac d{dt}W_k(\Omega_t)~=~\int_{M_t}E_k(\kappa)\varphi d\mu_t,\quad k=0,\cdots,n.
\end{equation*}
\end{lem}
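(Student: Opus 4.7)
The plan is to prove the lemma by induction on $k$, using the derivation of the evolution of the curvature integrals $V_{n-k}$ already established in \eqref{s2:evl-Vk} together with the identities \eqref{s1:quermass-1}--\eqref{s1:quermass-2} that relate the $V_{n-k}$ to the quermassintegrals $W_k$. The only genuine first-variation calculation needed is the base case; the rest is bookkeeping.

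For the base cases, I would first handle $k=0$: since $W_0(\Omega_t) = |\Omega_t|$ by definition and $E_0(\kappa)\equiv 1$, the standard first variation of volume under a normal deformation with speed $\varphi$ gives $\frac{d}{dt}|\Omega_t|=\int_{M_t}\varphi\,d\mu_t$, which is exactly the claim. For $k=1$, the identity \eqref{s1:quermass-2} gives $W_1(\Omega_t)=\frac{1}{n}V_n(\Omega_t)$, and setting $k=0$ in the formula \eqref{s2:evl-Vk} derived above yields
\begin{equation*}
\frac{d}{dt}V_n(\Omega_t)=\int_{M_t}\varphi\bigl(nE_1+0\cdot E_{-1}\bigr)d\mu_t = n\int_{M_t}E_1\varphi\,d\mu_t,
\end{equation*}
so that $\frac{d}{dt}W_1(\Omega_t)=\int_{M_t}E_1\varphi\,d\mu_t$.

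For the inductive step, I would assume the formula holds for $W_{k-1}$ with $1\leq k\leq n$ and deduce it for $W_{k+1}$. Rearranging \eqref{s1:quermass-1} gives
\begin{equation*}
W_{k+1}(\Omega_t)=\frac{1}{n-k}\bigl(V_{n-k}(\Omega_t)-k\,W_{k-1}(\Omega_t)\bigr).
\end{equation*}
Differentiating and plugging in the evolution of $V_{n-k}$ from \eqref{s2:evl-Vk} together with the inductive formula for $W_{k-1}$, one finds
\begin{equation*}
\frac{d}{dt}W_{k+1}(\Omega_t)=\frac{1}{n-k}\int_{M_t}\varphi\Bigl((n-k)E_{k+1}+kE_{k-1}-kE_{k-1}\Bigr)d\mu_t=\int_{M_t}E_{k+1}\varphi\,d\mu_t,
\end{equation*}
which is the desired identity. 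Iterating this for $k=1,\ldots,n-1$ covers $W_2,\ldots,W_n$, and the case $k=n+1$ is trivial since $W_{n+1}$ is a universal constant and $E_{n+1}\equiv 0$.

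There is no real obstacle here: the evolution \eqref{s2:evl-Vk} was already the work-intensive step (integration by parts, using that the Newton tensor $\partial E_k/\partial h_i^j$ is divergence free, and combining the zero-order terms $-\varphi\,\partial E_k/\partial h_i^j(h_i^kh_k^j-\delta_i^j)+nE_1E_k\varphi$ into $(n-k)E_{k+1}+kE_{k-1}$ via the Newton--MacLaurin-type identities in hyperbolic background). The only mild subtlety is that the relation \eqref{s1:quermass-1} is only stated for $1\leq k\leq n$, so one must treat $k=0$ separately via the direct volume variation, which is why the induction is seeded with both $W_0$ and $W_1$ rather than $W_0$ alone.
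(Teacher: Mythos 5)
Your proposal is correct and follows the same route the paper indicates: compute the evolution of the curvature integrals $V_{n-k}$ via \eqref{s2:evl-Vk}, then transfer this to the quermassintegrals through the relations \eqref{s1:quermass-1}--\eqref{s1:quermass-2} by an induction in steps of two, seeded by the $W_0$ and $W_1$ cases. The paper states the induction only in one line without spelling out the bookkeeping; your write-up supplies exactly the details the paper leaves implicit.
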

We can also derive the following evolution equation for the modified quermassintegrals.
\begin{lem}\label{s2:lem3}
Along the flow \eqref{s2:flow}, the modified quermassintegral $\widetilde{W}_k$ of the evolving domain $\Omega_t$ satisfies
\begin{equation*}
  \frac d{dt}\widetilde{W}_k(\Omega_t)~=~\int_{M_t}E_k(\lambda)\varphi d\mu_t,\quad k=0,\cdots,n,
\end{equation*}
 where $\lambda=(\lambda_1,\cdots,\lambda_n)=(\kappa_1-1,\cdots,\kappa_n-1)$ are the shifted principal curvatures of $M_t$.
\end{lem}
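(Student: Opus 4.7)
The plan is simply to combine the linearity of the definition \eqref{s1:Wk-td} with Lemma \ref{s2:lem2}, and then reduce the resulting statement to a purely algebraic identity relating the normalized elementary symmetric functions of $\kappa$ and of $\lambda = \kappa - 1$.

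More concretely, using \eqref{s1:Wk-td} and Lemma \ref{s2:lem2}, I would write
\begin{equation*}
 \frac{d}{dt}\widetilde W_k(\Omega_t) = \sum_{i=0}^k(-1)^{k-i}\binom{k}{i}\frac{d}{dt}W_i(\Omega_t) = \int_{M_t}\left(\sum_{i=0}^k(-1)^{k-i}\binom{k}{i}E_i(\kappa)\right)\varphi\,d\mu_t,
\end{equation*}
so that the lemma reduces to the pointwise identity
\begin{equation*}
 E_k(\lambda) = \sum_{i=0}^k(-1)^{k-i}\binom{k}{i}E_i(\kappa),\qquad \lambda_j = \kappa_j - 1.
\end{equation*}

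To prove this identity, I would expand the unnormalized elementary symmetric polynomial directly. Since $\sigma_k(\lambda) = \sum_{|S|=k}\prod_{j\in S}(\kappa_j-1)$, expanding each factor and grouping the resulting monomials by the number $i$ of $\kappa_j$'s picked gives
\begin{equation*}
 \sigma_k(\lambda) = \sum_{i=0}^k (-1)^{k-i}\binom{n-i}{k-i}\sigma_i(\kappa),
\end{equation*}
because each monomial $\prod_{j\in T}\kappa_j$ with $|T|=i$ arises from exactly $\binom{n-i}{k-i}$ subsets $S\supset T$ of size $k$. Dividing by $\binom{n}{k}$ and using the elementary identity
$\binom{n}{k}^{-1}\binom{n-i}{k-i} = \binom{k}{i}\binom{n}{i}^{-1}$
converts this into the required formula
$E_k(\lambda) = \sum_{i=0}^k(-1)^{k-i}\binom{k}{i}E_i(\kappa).$

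There is no real obstacle here: the evolution content of the lemma is already contained in Lemma \ref{s2:lem2}, and the rest is a short combinatorial manipulation of elementary symmetric polynomials. An equivalent, slightly slicker route is via the generating function: writing $\prod_j(1+t(\kappa_j-1)) = (1-t)^n\prod_j(1+\tfrac{t}{1-t}\kappa_j)$ and comparing coefficients of $t^k$ on both sides yields the same identity in one line.
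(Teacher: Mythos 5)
Your proposal is correct and follows essentially the same route as the paper: reduce, via the linearity of $\widetilde W_k$ and Lemma \ref{s2:lem2}, to the algebraic identity $E_k(\lambda)=\sum_{i=0}^k(-1)^{k-i}\binom{k}{i}E_i(\kappa)$, and then establish that identity. The only minor difference is that the paper proves the identity by expanding the generating function $\prod_i(t+\lambda_i)=\sum_l\sigma_l(\kappa)(t-1)^{n-l}$ with the binomial theorem, whereas you prove it by the equivalent direct subset count (each $\prod_{j\in T}\kappa_j$ arising from $\binom{n-i}{k-i}$ choices of $S\supset T$), together with the identity $\binom{n}{k}^{-1}\binom{n-i}{k-i}=\binom{k}{i}\binom{n}{i}^{-1}$; both computations are correct.
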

\proof
Firstly, we derive the formula for $\sigma_{k}(\lambda)$ in terms of $\sigma_i(\kappa), i=0,\cdots,k$. By the definition of the elementary symmetric polynomial, we have
\begin{equation*}
  \prod_{i=1}^n(t+\lambda_i)~=\sum_{k=0}^n\sigma_k(\lambda)t^{n-k}.
\end{equation*}
On the other hand,
\begin{align*}
   \prod_{i=1}^n(t+\lambda_i) =&~\prod_{i=1}^n(t-1+\kappa_i) ~=~ \sum_{l=0}^n\sigma_l(\kappa)(t-1)^{n-l} \nonumber\\
  = &~ \sum_{l=0}^n\sigma_l(\kappa)\sum_{i=0}^{n-l}\binom{n-l}it^i(-1)^{n-l-i} \nonumber\\
  =&~\sum_{k=0}^n\left(\sum_{i=0}^k\binom{n-i}{k-i}(-1)^{k-i}\sigma_i(\kappa)\right)t^{n-k}.
\end{align*}
Comparing the coefficients of $t^{n-k}$, we have
\begin{align*}
  \sigma_k(\lambda)~=&~\sum_{i=0}^k\binom{n-i}{k-i}(-1)^{k-i}\sigma_i(\kappa)\nonumber\\
  =&~\sum_{i=0}^k\binom{n-i}{k-i}(-1)^{k-i}\binom{n}{i}E_i(\kappa)\nonumber\\
  =&~\binom{n}k\sum_{i=0}^k(-1)^{k-i}\binom{k}{i}E_i(\kappa).
\end{align*}
Equivalently, we have
\begin{equation}\label{s2:sk-3}
  E_k(\lambda)~=~{\binom{n}{k}}^{-1}\sigma_k(\lambda)~=~\sum_{i=0}^k(-1)^{k-i}\binom{k}{i}E_i(\kappa).
\end{equation}
Then by the definition \eqref{s1:Wk-td} of $\widetilde{W}_k$ and Lemma \ref{s2:lem2},
\begin{align*}
  \frac d{dt}\widetilde{W}_k(\Omega_t)~=&\sum_{i=0}^k(-1)^{k-i}\binom{k}{i}\frac d{dt}W_i(\Omega_t)\nonumber\displaybreak[0]\\
  =&\sum_{i=0}^k(-1)^{k-i}\binom{k}{i}\int_{M_t}E_i(\kappa) \varphi d\mu_t ~=~ \int_{M_t}E_k(\lambda)\varphi d\mu_t.
\end{align*}
\endproof

If we consider the flow \eqref{flow-VMCF}, i.e., $\varphi=\phi(t)-\Psi(\mathcal{W})$, using \eqref{evl-h} and the Simons' identity we have the evolution equations for the curvature function $\Psi=\Psi(\mathcal{W})$ and the Weingarten matrix $\mathcal{W}=(h_i^j)$ of $M_t$ (see \cite{And-Wei2017-2}):
\begin{equation}\label{s2:evl-Psi}
  \frac{\partial}{\partial t}\Psi=~\dot{\Psi}^{kl}\nabla_k\nabla_l\Psi+(\Psi-\phi(t))(\dot{\Psi}^{ij}h_i^kh_k^j-\dot{\Psi}^{ij}\delta_i^j),
\end{equation}
and
\begin{align}\label{s2:evl-h}
 \frac{\partial}{\partial t}h_i^j  =& \dot{\Psi}^{kl}\nabla_k\nabla_lh_i^j+\ddot{\Psi}^{kl,pq}\nabla_ih_{kl}\nabla^jh_{pq} +(\dot{\Psi}^{kl}h_k^rh_{rl}+\dot{\Psi}^{kl}g_{kl})h_i^j\nonumber\\
   &\quad -\dot{\Psi}^{kl}h_{kl}(h_i^ph_{p}^j+\delta_i^j)+(\Psi-\phi(t))(h_i^kh_{k}^j-\delta_i^j),
\end{align}
where $\nabla$ denotes the Levi-Civita connection with respect to the induced metric $g_{ij}$ on $M_t$, and $\dot{\Psi}^{kl}, \ddot{\Psi}^{kl,pq}$ denote the derivatives of $\Psi$ with respect to the components of the Weingarten matrix $\mathcal{W}=(h_i^j)$.

If we consider the flow \eqref{flow-VMCF-2} of h-convex hypersurfaces, i.e., $\varphi=\phi(t)-F(\mathcal{W}-\mathrm{I})$,  we have the similar evolution equation for the curvature function $F$
\begin{align}\label{s2:evl-F}
  \frac{\partial}{\partial t}F=&~\dot{F}^{kl}\nabla_k\nabla_lF+(F-\phi(t))(\dot{F}^{ij}h_i^kh_k^j-\dot{F}^{ij}\delta_i^j),
\end{align}
and a parabolic type equation for the Weingarten matrix $\mathcal{W}=(h_i^j)$ of $M_t$:
\begin{align}\label{s2:evl-h2}
 \frac{\partial}{\partial t}h_i^j  =& \dot{F}^{kl}\nabla_k\nabla_lh_i^j+\ddot{F}^{kl,pq}\nabla_ih_{kl}\nabla^jh_{pq} +(\dot{F}^{kl}h_k^rh_{rl}+\dot{F}^{kl}g_{kl})h_i^j\nonumber\\
   &\quad -\dot{F}^{kl}h_{kl}(h_i^ph_{p}^j+\delta_i^j)+(F-\phi(t))(h_i^kh_{k}^j-\delta_i^j).
\end{align}
However, we observe that $\dot{F}^{kl}, \ddot{F}^{kl,pq}$ in \eqref{s2:evl-F} and \eqref{s2:evl-h2} denote the derivatives of $F$ with respect to the components of shifted Weingarten matrix $\mathcal{W}-\mathrm{I}$, so the homogeneity of $F$ implies that $\dot{F}^{kl}(h_{k}^l-\delta_k^l)=F$. Denote the components of the shifted Weingarten matrix by $S_{ij}=h_i^j-\delta_i^j$. Then the equation \eqref{s2:evl-h2} implies that
\begin{align}\label{s2:evl-S}
 \frac{\partial}{\partial t}S_{ij}  =&~ \dot{F}^{kl}\nabla_k\nabla_lS_{ij}+\ddot{F}^{kl,pq}\nabla_ih_{kl}\nabla^jh_{pq} +(\dot{F}^{kl}S_{kr}S_{rl}+2F-2\phi(t))S_{ij}\nonumber\\
   &\quad -(\phi(t)+\dot{F}^{kl}\delta_k^l) S_{ik}S_{kj}+\dot{F}^{kl}S_{kr}S_{rl}\delta_i^j.
\end{align}

\subsection{A generalised maximum principle}

In \S \ref{sec:Pinch}, we will use the tensor maximum principle to prove that the pinching estimate along the flow \eqref{flow-VMCF-2}. For the convenience of readers, we include here the statement of the tensor maximum principle, which was first proved by Hamilton \cite{Ha1982} and was generalized by the first author in \cite{And2007}.
\begin{thm}[\cite{And2007}]\label{s2:tensor-mp}
Let $S_{ij}$ be a smooth time-varying symmetric tensor field on a compact manifold $M$, satisfying
\begin{equation*}
\frac{\partial}{\partial t}S_{ij}=a^{kl}\nabla_k\nabla_lS_{ij}+u^k\nabla_kS_{ij}+N_{ij},
\end{equation*}
where $a^{kl}$ and $u$ are smooth, $\nabla$ is a (possibly time-dependent) smooth symmetric connection, and $a^{kl}$ is positive definite everywhere. Suppose that
\begin{equation}\label{s2:TM2}
  N_{ij}v^iv^j+\sup_{\Lambda}2a^{kl}\left(2\Lambda_k^p\nabla_lS_{ip}v^i-\Lambda_k^p\Lambda_l^qS_{pq}\right)\geq 0
\end{equation}
whenever $S_{ij}\geq 0$ and $S_{ij}v^j=0$. If $S_{ij}$ is positive definite everywhere on $M$ at $t=0$ and on $\partial M$ for $0\leq t\leq T$, then it is positive on $M\times[0,T]$.
\end{thm}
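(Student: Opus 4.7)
\medskip

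\textbf{Proof proposal for Theorem \ref{s2:tensor-mp}.}

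The plan is to argue by contradiction, combined with a vector-field extension trick that encodes the freedom in $\Lambda$ as the choice of extension. To handle the boundary case (equality in the reaction inequality), I would first perturb $S_{ij}$ to gain a strict inequality, prove the strict version, then let the perturbation tend to zero. Concretely, for $\varepsilon,K>0$ I would replace $S_{ij}$ by $\widetilde S_{ij}=S_{ij}+\varepsilon e^{Kt}g_{ij}$, note that the evolution equation picks up the term $\bigl(\varepsilon K e^{Kt}-2\varepsilon e^{Kt}a^{kl}R_{ik}{}^{l}{}_{j}\text{-type}\bigr)g_{ij}$ (controlled by a large choice of $K$), and observe that the hypothesis \eqref{s2:TM2} becomes \emph{strict} at any null vector of $\widetilde S$. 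Thus it suffices to prove the theorem under the strict inequality, and then $\varepsilon\to 0$ recovers the original statement.

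Assuming strict inequality, suppose the conclusion fails. By continuity there is a first time $t_0\in(0,T]$ at which positivity breaks: at some interior point $x_0\in M$, $S_{ij}(x_0,t_0)$ has a null eigenvector $v_0$, i.e.\ $S_{ij}v_0^j=0$, while $S_{ij}\geq 0$ everywhere on $M\times[0,t_0]$. I would then extend $v_0$ to a smooth vector field $v(x)$ in a neighbourhood of $x_0$ with $|v|_g\equiv 1$, and consider the scalar function
\begin{equation*}
  f(x,t)=S_{ij}(x,t)\,v^i(x)v^j(x).
\end{equation*}
Since $|v|=1$ and $S_{ij}\geq 0$ on $M\times[0,t_0]$, we have $f\geq 0$ there, with equality at $(x_0,t_0)$. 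Hence $(x_0,t_0)$ is a space-time minimum of $f$, yielding $\partial_t f\leq 0$, $\nabla f=0$, and $a^{kl}\nabla_k\nabla_l f\geq 0$ at that point.

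The key calculation is to expand these three conditions using the parabolic equation for $S_{ij}$. Writing $\Lambda_k^p=\nabla_k v^p(x_0)$ (which satisfies $\Lambda_k^pv_p=0$ because $|v|=1$) and exploiting $S_{ij}v^j=0$ at $(x_0,t_0)$ to kill the $S_{ij}\nabla_k\nabla_l v^j$ and $S_{ij}v^j\Lambda_k^i$ terms, the inequality $\partial_t f-a^{kl}\nabla_k\nabla_l f-u^k\nabla_k f\leq 0$ reduces to
\begin{equation*}
  N_{ij}v^iv^j-4a^{kl}\Lambda_k^p(\nabla_l S_{ip})v^i-2a^{kl}\Lambda_k^p\Lambda_l^q S_{pq}\leq 0
\end{equation*}
at $(x_0,t_0)$. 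Crucially, the extension $v(x)$ was at our disposal: any choice of first-order data $\Lambda_k^p$ tangent to the sphere $|v|=1$ is realisable, and the quadratic term $-2a^{kl}\Lambda_k^p\Lambda_l^q S_{pq}$ is unchanged if the components of $\Lambda$ parallel to $v$ are added (since $S_{pq}v^q=0$). So the inequality in fact holds for every $(1,2)$-tensor $\Lambda$, with no restriction. Rearranging and taking the supremum over $\Lambda$ gives
\begin{equation*}
  N_{ij}v^iv^j+\sup_{\Lambda}2a^{kl}\bigl(2\Lambda_k^p\nabla_l S_{ip}v^i-\Lambda_k^p\Lambda_l^q S_{pq}\bigr)\leq 0,
\end{equation*}
which contradicts the strict version of hypothesis \eqref{s2:TM2}.

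The main obstacle, and the point where the argument departs from Hamilton's original tensor maximum principle, is to justify that the supremum in the conclusion is over \emph{unconstrained} $\Lambda$ rather than over $\Lambda$ with $\Lambda v\perp v$: this requires verifying that the $v$-parallel part of $\Lambda$ drops out of the computation thanks to the null-eigenvector identity $S_{pq}v^q=0$, so that every $\Lambda$ is in fact realised by some unit extension of $v_0$. A secondary technical point is choosing the perturbation $\varepsilon e^{Kt}g_{ij}$ and the constant $K$ in terms of pointwise bounds for $a^{kl}$, $u^k$, and curvature of $\nabla$, so as to ensure that the hypothesis is preserved as a strict inequality for $\widetilde S_{ij}$ on $[0,T]$; this step is routine once the main argument is in place.
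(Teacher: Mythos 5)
The paper does not prove this theorem; it is quoted directly from \cite{And2007} (Theorem~3.2 there), so there is no internal proof to compare against. Your proposal follows the same basic strategy as Andrews' original argument: a perturbation to gain strictness, a first-time contradiction, a scalar function $f=S_{ij}v^iv^j$ built from a unit extension of the null eigenvector, and the observation that the freedom in the extension precisely reproduces the supremum over $\Lambda$.

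There is, however, a gap in the crucial step you single out yourself. You justify dropping the constraint $\Lambda_k^pv_p=0$ by noting that the \emph{quadratic} term $a^{kl}\Lambda_k^p\Lambda_l^qS_{pq}$ is invariant under $\Lambda_k^p\mapsto\Lambda_k^p+\mu_kv^p$ because $S_{pq}v^q=0$. But the \emph{linear} term $a^{kl}\Lambda_k^p\nabla_lS_{ip}v^i$ picks up the extra contribution $a^{kl}\mu_kv^pv^i\nabla_lS_{ip}$, and $S_{pq}v^q=0$ alone does not kill it. What rescues the argument is the first-order critical-point identity: at the spatial minimum of $f$ one has $0=\nabla_lf=(\nabla_lS_{ij})v^iv^j+2S_{ij}v^i\nabla_lv^j=(\nabla_lS_{ij})v^iv^j$ (the last equality since $S_{ij}v^j=0$), so $v^iv^p\nabla_lS_{ip}=0$ and the linear term is also invariant. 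Without this, the constrained supremum over $\{\Lambda:\Lambda v\perp v\}$ could be strictly smaller than the unconstrained supremum in \eqref{s2:TM2}, and the contradiction would not follow. This identity should be stated explicitly.

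Two smaller remarks on the perturbation step, which you flag as ``routine'': (a) the hypothesis \eqref{s2:TM2} is assumed only at points where $S_{ij}\geq 0$ and $S_{ij}v^j=0$, whereas at a first null eigenvector of $\widetilde S_{ij}=S_{ij}+\varepsilon\E^{Kt}g_{ij}$ the underlying $S$ has a small \emph{negative} eigenvalue; one must therefore use some form of continuity or Lipschitz dependence of $N_{ij}$ on $S$ and $\nabla S$ to carry the inequality over, with the error absorbed by the $\varepsilon K\E^{Kt}$ term; (b) since $\nabla$ is only assumed symmetric (not metric-compatible) and $g$ may be time-dependent, $\nabla_k\nabla_l g_{ij}$ and $\partial_t g_{ij}$ produce extra terms whose bounds also feed into the choice of $K$. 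Neither point is fatal, but both deserve a sentence.
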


\section{Preserving positive sectional curvature}\label{sec:PSC}
In this section, we will prove that the flow \eqref{flow-VMCF} preserves the positivity of sectional curvatures, if $\alpha>0$ and $f$ satisfies Assumption \ref{s1:Asum}.
\begin{thm}\label{s3:thm-2}
If the initial hypersurface $M_0$ has positive sectional curvature, then along the flow \eqref{flow-VMCF} in $\mathbb{H}^{n+1}$ with $f$ satisfying Assumption \ref{s1:Asum} and any power $\alpha>0$  the evolving hypersurface $M_t$ has positive sectional curvature for $t>0$.
\end{thm}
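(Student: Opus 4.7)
The plan is to argue by maximum principle on the orthonormal frame bundle $O(M_t)$. By the Gauss equation the curvature operator $\mathcal R:\Lambda^2 TM\to\Lambda^2 TM$ is diagonal in a principal frame with eigenvalues $\kappa_i\kappa_j-1$, so positive sectional curvature is exactly $\mathcal R>0$. I would lift $h$ to $\tilde h_{ij}(x,\{e_k\})=h(e_i,e_j)(x)$ on $O(M)$ and consider
\begin{equation*}
\Phi(x,\{e_k\},t)~:=~\tilde h_{11}\tilde h_{22}-\tilde h_{12}^2-1,
\end{equation*}
whose infimum over the fibre equals the minimum sectional curvature at $x$. Suppose for contradiction that $\Phi$ first vanishes at some $t_0>0$, at a point $x_0$ and a frame $\{e_k^0\}$. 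Fibre first-order conditions force $\{e_k^0\}$ to be a principal basis at $x_0$ with $\kappa_1\le\cdots\le\kappa_n$ and $\kappa_1\kappa_2=1$, and the space-time minimum supplies vanishing spatial gradient together with non-negative spatial and fibre Hessians.

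Using \eqref{s2:evl-h} with $\Psi=f^\alpha$, I would compute, for $G=\kappa_1\kappa_2-1$,
\begin{equation*}
\partial_t G-\dot\Psi^{kl}\nabla_k\nabla_l G=\kappa_2\ddot\Psi^{kl,pq}\nabla_1 h_{kl}\nabla_1 h_{pq}+\kappa_1\ddot\Psi^{kl,pq}\nabla_2 h_{kl}\nabla_2 h_{pq}-2\dot\Psi^{kl}\nabla_k\kappa_1\nabla_l\kappa_2+\mathcal Q(\kappa).
\end{equation*}
At $\kappa_1\kappa_2=1$ the $(\Psi-\phi)$ contributions cancel, and the homogeneity identity $\sum\dot f^i\kappa_i=f$ reduces the reaction to $\mathcal Q=2\alpha f^{\alpha-1}\bigl(\sum_i\dot f^i(\kappa_i^2+1)-f(\kappa_1+\kappa_2)\bigr)$. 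The vanishing of the spatial gradient of $\Phi$, combined with $\tilde h_{12}=0$ at the minimum frame, gives $\kappa_1\nabla_k\kappa_2+\kappa_2\nabla_k\kappa_1=0$, which converts the indefinite cross-gradient term into the non-negative quantity $2(\kappa_2/\kappa_1)\dot\Psi^{kl}\nabla_k\kappa_1\nabla_l\kappa_1$.

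The hard part is to dispose of the two $\ddot\Psi$-terms together with $\mathcal Q$. By \eqref{s2:F-ddt} each $\ddot\Psi^{kl,pq}\nabla_i h_{kl}\nabla_i h_{pq}$ splits into an off-diagonal piece $2\sum_{j>k}(\dot\Psi^j-\dot\Psi^k)(\kappa_j-\kappa_k)^{-1}(\nabla_i h_{jk})^2$, which is non-negative by Assumption \ref{s1:Asum}(ii), and a diagonal piece controlled from below, via Assumption \ref{s1:Asum}(iii) (equivalently, log-convexity of $F$ in the components of $\log\mathcal W$), by a complete square minus $\alpha f^{\alpha-1}\sum_k\dot f^k(\nabla_ih_{kk})^2/\kappa_k$. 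The main technical obstacle is to absorb this last negative correction and match it with $\mathcal Q$: for this I would exploit the fibre-Hessian inequality, which under an infinitesimal antisymmetric rotation $\Lambda$ of the frame gives $\mathrm{Hess}_\Lambda\Phi\ge 0$ and, through Codazzi symmetry $\nabla_i h_{jk}=\nabla_k h_{ij}$, produces quadratic-in-$\Lambda$ corrections; optimising over $\Lambda$ in the spirit of the generalised tensor maximum principle \cite{And2007}*{Theorem 3.2} precisely cancels the remaining term and combines with $\mathcal Q$ to yield $\partial_t G\ge 0$. Because $\Phi$ is non-linear in $h$, this calculation does not fit the hypotheses of Theorem \ref{s2:tensor-mp} directly and must be performed by hand, merging the generalised tensor maximum principle of \cite{And2007} with the vector-bundle reaction-diffusion maximum principles of \cites{AH,Ha1986}. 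The resulting contradiction establishes the theorem.
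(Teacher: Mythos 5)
Your proposal follows the paper's proof essentially step for step: lift the sectional curvature to the frame bundle $O(M)$, compute the time derivative of $G=h_{11}h_{22}-h_{12}^2-1$ at a minimum frame, decompose $\ddot\Psi$ via \eqref{s2:F-ddt}, invoke Assumption~\ref{s1:Asum}(ii) for the off-diagonal piece and (iii) (log-convexity of $F$) for the diagonal piece, and minimise the fibre Hessian over rotations $\Lambda$ in the spirit of \cite{And2007} to supply the compensating positive terms. One small correction: you need not ``match'' the reaction $\mathcal Q$ against the gradient terms, since at $\kappa_1\kappa_2=1$ one has $\mathcal Q=2\alpha f^{\alpha-1}\sum_k\dot f^k(\kappa_k-\kappa_1)(\kappa_k-\kappa_2)\geq 0$ outright (each summand vanishes for $k\in\{1,2\}$ and is non-negative for $k\geq 3$ since $\kappa_1\le\kappa_2\le\kappa_k$), so in the paper the reaction and gradient contributions are shown non-negative independently, and the maximum principle is applied to the inequality $\partial_t G\geq -CG$ rather than a first-vanishing-time contradiction.
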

\proof
The sectional curvature defines a smooth function on the Grassmannian bundle of two-dimensional subspaces of $TM$.  For convenience we lift this to a function on the orthonormal frame bundle $O(M)$ over $M$:  Given
a point $x\in M$ and $t\geq 0$, and a frame ${\mathbb O} = \{e_1,\cdots,e_n\}$ for $T_xM$ which is orthonormal with respect to $g(x,t)$, we define
$$
G(x,t,{\mathbb O}) = h_{(x,t)}(e_1,e_1)h_{(x,t)}(e_2,e_2)-h_{(x,t)}(e_1,e_2)^2-1.
$$
We consider a point $(x_0,t_0)$ and a frame ${\mathbb O}_0 = \{\bar e_1,\cdots,\bar e_n\}$ at which a new minimum of the function $G$ is attained, so that we have $G(x,t,{\mathbb O})\geq G(x_0,t_0,{\mathbb O}_0)$ for all $x\in M$ and all $t\in[0,t_0]$, and all ${\mathbb O}\in F(M)_{(x,t)}$.  The fact that ${\mathbb O}_0$ achieves the minimum of $G$ over the fibre $F(M)_{(x_0,t_0)}$ implies that $e_1$ and $e_2$ are eigenvectors of $h_{(x_0.t_0)}$ corresponding to $\kappa_1$ and $\kappa_2$, where $\kappa_1\leq\kappa_2\leq\cdots\leq\kappa_n$ are the principal curvatures at $(x_0,t_0)$.  Since $G$ is invariant under rotation in the subspace orthogonal to $\bar e_1$ and $\bar e_2$, we can assume that $h(\bar e_i,\bar e_i)=\kappa_i$ and $h(\bar e_i,\bar e_j)=0$ for $i\neq j$.

The time derivative of $G$ at $(x_0,t_0,{\mathbb O}_0)$ is given by Equation \eqref{s2:evl-h}, noting that the frame ${\mathbb O}(t)$ for $T_xM$ defined by $\frac{d}{dt}e_i(t) = (F^\alpha-\phi){\mathcal W}(e_i)$ remains orthonormal with respect to $g(x,t)$ if $e_i(t_0)=\bar e_i$ for each $i$.  This yields the following:
\begin{align}\label{eq:dGdt}
\frac{\partial}{\partial t}G|_{(x_0,t_0,{\mathbb O}_0)}
&= \kappa_1\frac{\partial}{\partial t}h_{2}^{2}+\kappa_2\frac{\partial}{\partial_t}h_{1}^{1}\notag\\
&=\kappa_1\dot\Psi^{kl}\nabla_k\nabla_lh_{22}+\kappa_2\dot\Psi^{kl}\nabla_k\nabla_lh_{11}+
\kappa_1\ddot\Psi(\nabla_2h,\nabla_2h)+\kappa_2\ddot\Psi(\nabla_1h,\nabla_1h)\notag\\
&\quad\null+2\left(\dot{\Psi}^{kl}h_k^rh_{rl}+\dot{\Psi}^{kl}g_{kl}\right)\kappa_1\kappa_2-(\alpha-1)\Psi \kappa_1\kappa_2(\kappa_1+\kappa_2)\notag\\
&\quad\null-(\alpha+1)\Psi(\kappa_1+\kappa_2)-\phi(t)(\kappa_1\kappa_2-1)(\kappa_1+\kappa_2).
\end{align}

%
Since $\Psi=f^{\alpha}$, we have the following:
\begin{align*}
2&\left(\dot{\Psi}^{kl}h_k^rh_{rl}+\dot{\Psi}^{kl}g_{kl}\right)\kappa_1\kappa_2-(\alpha-1)\Psi \kappa_1\kappa_2(\kappa_1+\kappa_2)-(\alpha+1)\Psi(\kappa_1+\kappa_2)-\phi(t)(\kappa_1\kappa_2-1)(\kappa_1+\kappa_2)\\
&= 2\alpha f^{\alpha-1}\sum_k\dot f^k(\kappa_k-\kappa_2)(\kappa_k-\kappa_1)+G\left(f^{\alpha-1}\sum_k\dot f^k\kappa_k(2\alpha\kappa_k-(\alpha-1)(\kappa_1+\kappa_2))-\phi(t)(\kappa_1+\kappa_2)
\right)\\
&\geq -CG,
\end{align*}
where $C$ is a bound for the smooth function in the last bracket.
To estimate the remaining terms, we consider the second derivatives of $G$ along a curve on $O(M)$ defined as follows:  We let $\gamma$ be any geodesic of $g(t_0)$ in $M$ with $\gamma(0)=x_0$, and define a frame ${\mathbb O}(s)=(e_1(s),\cdots,e_n(s))$ at $\gamma(s)$ by taking $e_i(0)=\bar e_i$ for each $i$, and $\nabla_se_i(s) = \Gamma_{ij}e_j(s)$ for some constant antisymmetric matrix $\Gamma$.  Then we compute
\begin{align}\label{eq:d2Gds2}
\frac{d^2}{ds^2}G(x(s),t_0,{\mathbb O}(s))\Big|_{s=0}
&=\kappa_2\nabla^2_sh_{11}+\kappa_1\nabla^2_sh_{22}+2\left(\nabla_sh_{22}\nabla_sh_{11}-(\nabla_sh_{12})^2\right)\notag\\
&\quad\null +4\sum_{p>2}\Gamma_{1p}\kappa_2\nabla_sh_{1p}+4\sum_{p>2}\Gamma_{2p}\kappa_1\nabla_sh_{2p}\notag\\
&\quad\null +2\sum_{p>2}\Gamma_{1p}^2\kappa_2(\kappa_p-\kappa_1)+2\sum_{p>2}\Gamma_{2p}^2\kappa_1(\kappa_p-\kappa_2).
\end{align}
Since $G$ has a minimum at $(x_0,t_0,{\mathbb O}_0)$, the right-hand side of \eqref{eq:d2Gds2} is non-negative for any choice of $\Gamma$.  Minimizing over $\Gamma$ gives
\begin{align}\label{eq:D2G}
0&\leq \kappa_2\nabla^2_sh_{11}+\kappa_1\nabla^2_sh_{22}+2\left(\nabla_sh_{22}\nabla_sh_{11}-(\nabla_sh_{12})^2\right)\notag\\
&\quad\null -2\sum_{p>2}\frac{\kappa_2}{\kappa_p-\kappa_1}(\nabla_sh_{1p})^2
-2\sum_{p>2}\frac{\kappa_1}{\kappa_p-\kappa_2}(\nabla_sh_{2p})^2
\end{align}
where we terms on the last line as vanishing if the denominators vanish (since the corresponding component of $\nabla h$ vanishes in that case).  This gives
\begin{align}\label{eq:dtGineq}
\frac{\partial}{\partial t}G|_{(x_0,t_0,{\mathbb O}_0)}
&\geq \kappa_1\ddot\Psi(\nabla_2h,\nabla_2h)+\kappa_2\ddot\Psi(\nabla_1h,\nabla_1h)-2\sum_k\dot\Psi^k\left(\nabla_kh_{22}\nabla_kh_{11}-(\nabla_kh_{12})^2\right)\notag\\
&\quad\null +2\sum_k\dot\Psi^k\left(\sum_{p>2}\frac{\kappa_2}{\kappa_p-\kappa_1}(\nabla_kh_{1p})^2
+\sum_{p>2}\frac{\kappa_1}{\kappa_p-\kappa_2}(\nabla_kh_{2p})^2\right)-CG
\end{align}
The right-hand side can be expanded using $\Psi=f^\alpha$ and the identity \eqref{s2:F-ddt}:
\begin{align*}
\frac{f^{1-\alpha}}{\alpha}\left(\frac{d}{dt}G+CG\right)
&\geq
\kappa_2\left( \sum_{k,l}\ddot{f}^{kl}\nabla_1h_{kk}\nabla_1h_{ll}+(\alpha-1)\frac{(\nabla_1f)^2}{f}+\sum_{k\neq l}\frac{\dot{f}^k-\dot{f}^l}{\kappa_k-\kappa_l}(\nabla_1h_{kl})^2
\right)\\
&\quad\null +\kappa_1\left(\sum_{k,l} \ddot{f}^{kl}\nabla_2h_{kk}\nabla_2h_{ll}+(\alpha-1)\frac{(\nabla_2f)^2}{f}+\sum_{k\neq l}\frac{\dot{f}^k-\dot{f}^l}{\kappa_k-\kappa_l}(\nabla_2h_{kl})^2
\right)\\
&\quad\null -2\sum_k\dot f^k\left(\nabla_kh_{22}\nabla_kh_{11}-(\nabla_kh_{12})^2\right)\\
&\quad\null+2\sum_k\dot f^k\left(\sum_{p>2}\frac{\kappa_2}{\kappa_p-\kappa_1}(\nabla_kh_{1p})^2
+\sum_{p>2}\frac{\kappa_1}{\kappa_p-\kappa_2}(\nabla_kh_{2p})^2\right).
\end{align*}
Note that by assumption the function $f$ satisfies the inequalities \eqref{s1:asum-1} and \eqref{s1:asum-2}. By the inequality \eqref{s1:asum-1}, for any $k\neq l$ we have
\begin{align*}
  \frac{\dot{f}^k-\dot{f}^l}{\kappa_k-\kappa_l}+\frac{\dot{f}^k}{\kappa_l}=&~\frac{\dot{f}^k\kappa_k-\dot{f}^l\kappa_l}{(\kappa_k-\kappa_l)\kappa_l}~\geq 0
\end{align*}
The inequality \eqref{s1:asum-2} is equivalent to
\begin{equation*}
  \sum_{k,l}\ddot{f}^{kl}y_ky_l\geq~f^{-1}(\sum_{k=1}^n\dot{f}^ky_k)^2-\sum_{k=1}^n\frac{\dot{f}^k}{\kappa_k}y_k^2
\end{equation*}
for all $(y_1,\cdots,y_n)\in \mathbb{R}^n$.
These imply that
\begin{align}\label{s3:Q1}
\frac{f^{1-\alpha}}{\alpha}\left(\frac{dG}{dt}+CG\right) &\geq
\kappa_2\left( \alpha \frac{(\nabla_1f)^2}{f}-\sum_{k=1}^n\frac{\dot{f}^k}{\kappa_k}(\nabla_1h_{kk})^2-\sum_{k\neq l}\frac{\dot{f}^k}{\kappa_l}(\nabla_1h_{kl})^2\right)\nonumber\\
&\quad\null+\kappa_1\left( \alpha \frac{(\nabla_2f)^2}{f}-\sum_{k=1}^n\frac{\dot{f}^k}{\kappa_k}(\nabla_2h_{kk})^2-\sum_{k\neq l}\frac{\dot{f}^k}{\kappa_l}(\nabla_2h_{kl})^2\right)\nonumber\displaybreak[0]\\
&\quad+2\sum_k\dot{f}^k\left(-\nabla_kh_{11}\nabla_kh_{22}+(\nabla_kh_{12})^2\right)\nonumber\\
&\quad+2\sum_{k=1}^n\sum_{p>2}\frac{\dot{f}^k}{\kappa_p}\left(\kappa_2(\nabla_1h_{kp})^2+\kappa_1(\nabla_2h_{kp})^2\right)\nonumber\\
=&~\kappa_2\alpha\frac{(\nabla_1f)^2}{f}+\kappa_1\alpha\frac{(\nabla_2f)^2}{f}+\sum_{k,p=3}^n\frac{\dot{f}^k}{\kappa_p}\left(\kappa_2(\nabla_1h_{kp})^2+\kappa_1(\nabla_2h_{kp})^2\right)\nonumber\\
&-\kappa_2\left(\frac{\dot{f}^1}{\kappa_1}(\nabla_1h_{11})^2+\frac{\dot{f}^2}{\kappa_2}(\nabla_1h_{22})^2+\frac{\dot{f}^1}{\kappa_2}(\nabla_1h_{12})^2+\frac{\dot{f}^2}{\kappa_1}(\nabla_1h_{21})^2\right)\displaybreak[0]\nonumber\\
&-\kappa_1\left(\frac{\dot{f}^1}{\kappa_1}(\nabla_2h_{11})^2+\frac{\dot{f}^2}{\kappa_2}(\nabla_2h_{22})^2+\frac{\dot{f}^1}{\kappa_2}(\nabla_2h_{12})^2+\frac{\dot{f}^2}{\kappa_1}(\nabla_2h_{21})^2\right)\nonumber\\
&+2\dot{f}^1\left(-\nabla_1h_{11}\nabla_1h_{22}+(\nabla_1h_{12})^2\right)+2\dot{f}^2\left(-\nabla_2h_{11}\nabla_2h_{22}+(\nabla_2h_{12})^2\right)\nonumber\\
&+2\dot f^1\sum_{p>2}\left(\frac{\kappa_2}{\kappa_p}(\nabla_1h_{1p})^2+\frac{\kappa_1}{\kappa_p}(\nabla_2h_{1p})^2\right)\nonumber\\
&+2\dot f^2\sum_{p>2}\left(\frac{\kappa_2}{\kappa_p}(\nabla_1h_{2p})^2+\frac{\kappa_1}{\kappa_p}(\nabla_2h_{2p})^2\right)\displaybreak[0]\nonumber\\
&+2\sum_{k>2}\dot{f}^k\left(-\nabla_kh_{11}\nabla_kh_{22}+(\nabla_kh_{12})^2\right).
\end{align}
Since $(x_0,{\mathbb O}_0)$ is a minimum point of $G$ at time $t_0$, we have $\nabla_iG=0$ for $i=1,\cdots,n$, so
\begin{equation}\label{s3:nab-G}
  \kappa_2\nabla_ih_{11}+\kappa_1\nabla_ih_{22}=0,\qquad i=1,\cdots,n.
\end{equation}
Substituting \eqref{s3:nab-G} into \eqref{s3:Q1}, the second to the fourth lines on the right of \eqref{s3:Q1} vanish,
 the last line becomes $2\sum_{k>2}\dot{f}^k\left(\frac{\kappa_1}{\kappa_2}(\nabla_kh_{22})^2+(\nabla_kh_{12})^2\right)\geq 0$, and the remaining terms are non-negative.

We conclude that $\frac{\partial}{\partial t}G\geq -CG$ at a spatial minimum point, and hence by the maximum principle \cite{Ha1986}*{Lemma 3.5} we have $G\geq \E^{-Ct}\inf_{t=0}G>0$ under the flow \eqref{flow-VMCF}.
\endproof

\section{Proof of Theorem \ref{thm1-1}}\label{sec:thm1-pf}

In this section, we will give the proof of Theorem \ref{thm1-1}.
\subsection{Shape estimate}\label{sec:4-1}
First, we show that the preservation of the volume of $\Omega_t$, together with a reflection argument, implies that the inner radius and outer radius of $\Omega_t$ are uniformly bounded from above and below by positive constants.
\begin{lem}
Denote $\rho_-(t), \rho_+(t)$ be the inner radius and outer radius of $\Omega_t$, the domain enclosed by $M_t$. Then there exist positive constants $c_1,c_2$ depending only on $n,M_0$ such that
\begin{equation}\label{s4:io-radius1}
  0<c_1\leq \rho_-(t)\leq \rho_+(t)\leq c_2
\end{equation}
for all time $t\in [0,T)$.
\end{lem}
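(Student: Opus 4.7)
The plan is to establish the outer-radius bound $\rho_+(t)\leq c_2$ by an Alexandrov moving-plane argument, and then to recover the lower inner-radius bound $\rho_-(t)\geq c_1$ by transferring the problem to Euclidean space through the Klein model, using that $\mathrm{Vol}(\Omega_t)=\mathrm{Vol}(\Omega_0)$ is conserved.

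For the upper bound: the speed $\phi(t)-F^\alpha(\mathcal{W})$ depends on $M_t$ only through the principal curvatures, plus the spatially constant global term $\phi(t)$, so the flow is invariant under ambient isometries of $\mathbb{H}^{n+1}$. Fix any totally geodesic hyperplane $P\subset\mathbb{H}^{n+1}$ with reflection $\sigma_P$. If $\sigma_P$ sends the part of $\Omega_0$ on the positive side of $P$ strictly inside the part of $\Omega_0$ on the negative side, then $\sigma_P(M_t)$ and $M_t$ are two solutions of \eqref{flow-VMCF} (the global term is the same because $\phi(t)$ is a spatial constant and both hypersurfaces enclose the same volume). The standard parabolic comparison principle, applied to the signed distance between $M_t$ and $\sigma_P(M_t)$ near their contact set and combined with the Hopf boundary point lemma along $P$, shows this inclusion is preserved for all $t\in[0,T)$. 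Sweeping $P$ over the family of geodesic hyperplanes orthogonal to a given geodesic, and then rotating the geodesic, we conclude that $\Omega_t$ is contained in a fixed geodesic ball $B^{\mathbb{H}}_{R_0}(p_0)$ whose radius depends only on $M_0$; this gives $\rho_+(t)\leq c_2$.

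For the lower bound: the positivity of sectional curvatures forces $\kappa_i\kappa_j>1$ for $i\neq j$, and in particular all $\kappa_i>0$, so $\Omega_t$ is geodesically convex. In the Klein (projective) model $K:\mathbb{H}^{n+1}\to\mathbb{B}^{n+1}\subset\RR^{n+1}$, geodesics of $\mathbb{H}^{n+1}$ are straight segments, so $K(\Omega_t)$ is a Euclidean convex body. The bound $\Omega_t\subset B^{\mathbb{H}}_{R_0}(p_0)$ forces $K(\Omega_t)$ into a compact subset of the open unit ball, on which the Jacobian of $K$ and of $K^{-1}$ are bounded between two positive constants depending only on $R_0$. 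Therefore
\begin{equation*}
\mathrm{Vol}_{\mathrm{Eucl}}(K(\Omega_t))\geq c_3\,\mathrm{Vol}_{\mathbb{H}}(\Omega_t)=c_3\,\mathrm{Vol}_{\mathbb{H}}(\Omega_0),\qquad \mathrm{diam}_{\mathrm{Eucl}}(K(\Omega_t))\leq c_4.
\end{equation*}
A standard fact for Euclidean convex bodies (via John's ellipsoid: if $E\subset K(\Omega_t)\subset (n{+}1)E$ with semi-axes $a_1\leq\cdots\leq a_{n+1}$, then volume is bounded above by $c_n a_1\prod_{i\geq 2}a_i\leq c_n a_1\,\mathrm{diam}^n$) yields a Euclidean ball of radius $\geq c_5\,\mathrm{Vol}_{\mathrm{Eucl}}/\mathrm{diam}_{\mathrm{Eucl}}^n$ inside $K(\Omega_t)$. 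Pulling this ball back through $K^{-1}$, which has bounded distortion on the compact region in play, produces a geodesic ball of uniform hyperbolic radius inside $\Omega_t$, giving $\rho_-(t)\geq c_1$.

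The main obstacle is the Alexandrov step: earlier treatments in this setting have exploited h-convexity both to start the moving plane sweep and to run the comparison, whereas here we have only positive sectional curvature. This is resolved by noticing that (i) convexity of $\Omega_t$ (derived above from $\kappa_i>0$) is enough to make the moving-plane procedure well-defined, and (ii) the comparison between $M_t$ and $\sigma_P(M_t)$ is governed by a linear parabolic equation obtained by linearizing the degree-one homogeneous operator $F^\alpha$ in the shape, to which the strong maximum principle applies; the global term $\phi(t)$ is transparent to this comparison. The remainder is then routine.
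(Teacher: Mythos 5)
Your overall strategy — Alexandrov reflection for the outer radius, Klein projection plus volume conservation for the inner radius — matches the paper's, and your lower-bound argument is essentially the paper's with John's ellipsoid in place of Steinhagen's inequality; both give the needed fact that a Euclidean convex body with diameter bounded above and volume bounded below contains a ball of uniformly positive radius, and the transfer back to $\mathbb{H}^{n+1}$ via bounded distortion is the same.

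The upper bound, however, has a genuine gap. Reflection-preservation gives, for each geodesic $\gamma$, that the critical parameter $S^+_\gamma(\Omega_t)=\inf\{s:\sigma_{H_{\gamma(s)}}(\Omega_t^+(s))\subset\Omega_t^-(s)\}$ is non-increasing, hence $S^+_\gamma(\Omega_t)\le S^+_\gamma(\Omega_0)$. This constrains the reflection-symmetry centers of $\Omega_t$ in every direction but it does not, by itself, bound the diameter: the inclusion $\sigma_P(\Omega_t^+)\subset\Omega_t^-$ only says the extent of $\Omega_t$ past $P$ is dominated by the extent on the other side, and in particular a dilation of $\Omega_t$ to a much larger ball around the same center satisfies every reflection constraint that $\Omega_0$ does, so sweeping planes alone cannot give $\rho_+(t)\le c_2$. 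Observe that you never invoke $|\Omega_t|=|\Omega_0|$ in the upper-bound step; it is in fact indispensable. The paper closes the gap in three steps: first, the reflection property shows $\Omega_t\cap B_R(p)\neq\emptyset$ where $B_R(p)\supset\Omega_0$, since if $\Omega_t$ had drifted entirely off $B_R(p)$ then a hyperplane tangent to $\partial B_R(p)$ and separating it from $\Omega_t$ would contradict preservation (initially $\Omega_0^+=\emptyset$, while at time $t$ one would have $\Omega_t^-=\emptyset$ but $\Omega_t^+\neq\emptyset$); second, combining this with $|\Omega_t|=|\Omega_0|$ forces the nearest boundary point $x_1$ of $M_t$ to $p$ to lie in $B_R(p)$ (otherwise $B_R(p)\subset\Omega_t$ would force $|\Omega_t|>|\Omega_0|$ for nontrivial initial data); third, and only now, the reflection estimate along the geodesic through $x_1$ and the farthest point $x_2$, across the plane tangent to $\partial B_R(p)$ and perpendicular to that geodesic, gives $d(x_2,q)\le d(q,x_1)\le 3R$ and hence $d(p,x_2)\le 7R$ by the triangle inequality. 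You need an anchoring step of this kind, using volume conservation, before the moving-plane argument can yield the outer-radius bound.
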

\proof
We first use the Alexandrov reflection method to estimate the diameter of $\Omega_t$. In \cite{And-Wei2017-2}, the first and the third authors have already used the Alexandrov reflection method in the proof of convergence of the flow. Let $\gamma$ be a geodesic line in $\mathbb{H}^{n+1}$, and let $H_{\gamma(s)}$ be the totally geodesic hyperbolic $n$-plane in $\mathbb{H}^{n+1}$ which is perpendicular to $\gamma$ at $\gamma(s), s\in \mathbb{R}$. We use the notation $H_{s}^+$ and $H_s^-$ for the half-spaces in $\mathbb{H}^{n+1}$ determined by $H_{\gamma(s)}$:
\begin{equation*}
  H_s^+:=\bigcup_{s'\geq s}H_{\gamma(s')},\qquad  H_s^-:=\bigcup_{s'\leq s}H_{\gamma(s')}
\end{equation*}
For a bounded domain $\Omega$ in $\mathbb{H}^{n+1}$, denote $\Omega^+(s)=\Omega\cap H_s^+$ and $\Omega^-(s)=\Omega\cap H_s^-$. The reflection map across $H_{\gamma(s)}$ is denoted by $R_{\gamma,s}$. We define
\begin{equation*}
  S_{\gamma}^+(\Omega):=\inf\{s\in \mathbb{R}~|~R_{\gamma,s}(\Omega^+(s))\subset\Omega^-(s)\}.
\end{equation*}
It has been proved in \cite{And-Wei2017-2} that for any geodesic line $\gamma$ in $\mathbb{H}^{n+1}$, $S_{\gamma}^+(\Omega_t)$ is strictly decreasing along the flow \eqref{flow-VMCF} unless $R_{\gamma,\bar s}(\Omega_t)=\Omega_t$ for some $\bar s\in \mathbb{R}$. Note that to prove this property, we only need the convexity of the evolving hypersurface $M_t=\partial\Omega_t$ which is guaranteed by the positivity of the sectional curvature. The readers may refer to \cite{Chow97,Chow-Gul96} for more details on the Alexandrov reflection method. 
\begin{figure}
  \centering
  \includegraphics[width=\textwidth]{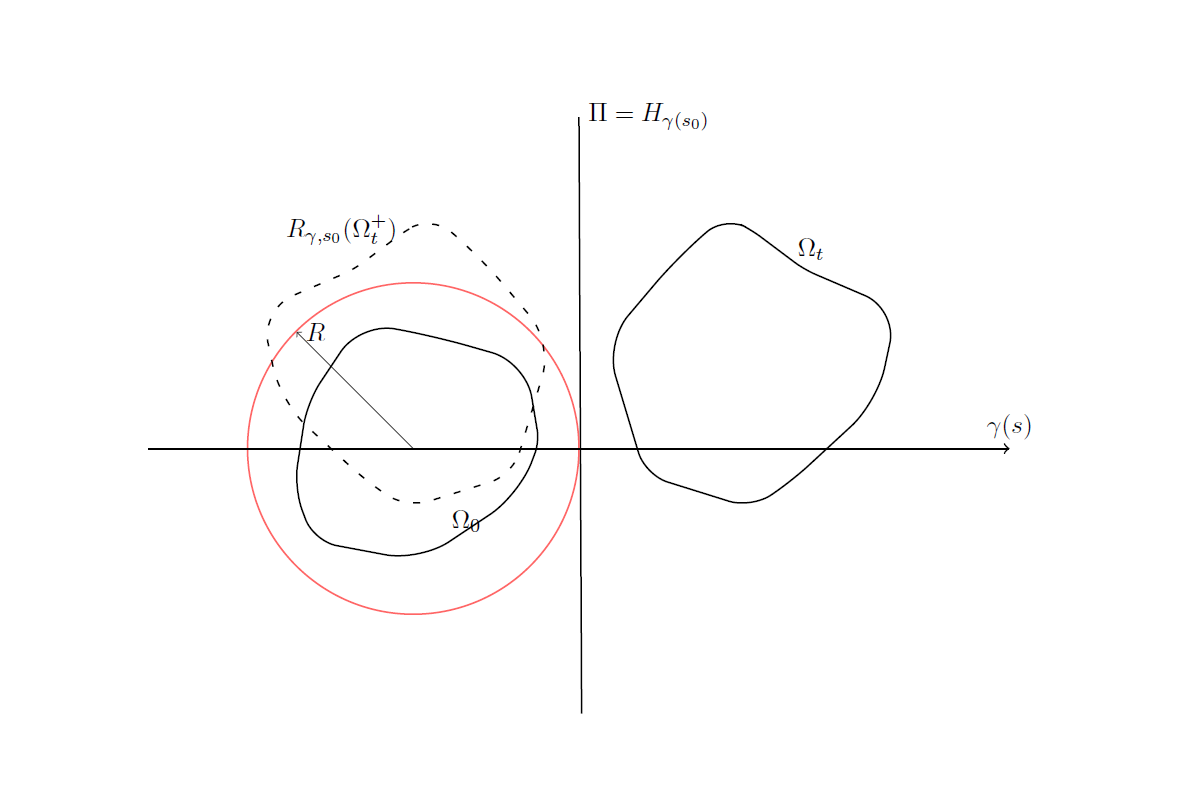}\\
  \caption{$\Omega_t$ can not leave out $B_R$}
\end{figure}

Choose $R>0$ such that the initial domain $\Omega_0$ is contained in some geodesic ball $B_R(p)$ of radius $R$ and centered at some point $p$ in the hyperbolic space. The above reflection property implies that $\Omega_t\cap B_R(p)\neq \emptyset$ for any $t\in [0,T)$. If not, there exists some time $t$ such that $\Omega_t$ does not intersect the geodesic ball $B_R$. Choose a geodesic line $\gamma(s)$ with the property that there exists a geodesic hyperplace $\Pi=H_{\gamma(s_0)}$ which is perpendicular to $\gamma(s)$ and is tangent to the geodesic sphere $\partial B_R$, and the domain $\Omega_t$ lies in the half-space $H^+_{s_0}$. Then $R_{\gamma,s_0}(\Omega^+_0)=\emptyset\subset \Omega_0^-$. Since $S_{\gamma}^+(\Omega_t)$ is decreasing, we have $R_{\gamma,s_0}(\Omega_t^+)\subset \Omega_t^-$. However, this is not possible because $\Omega_t^-=\Omega_t\cap H_{s_0}^-=\emptyset$ and $R_{\gamma,s_0}(\Omega_t^+)$ is obviously not empty.

\begin{figure}
  \centering
  \includegraphics[width=\textwidth]{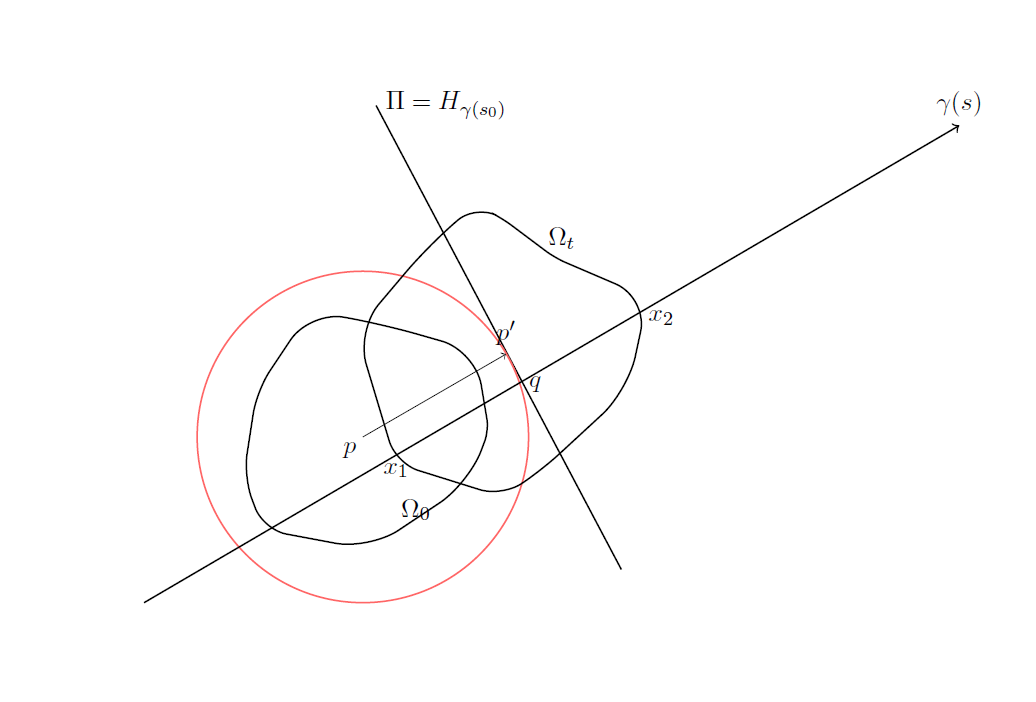}\\
  \caption{Diameter of $\Omega_t$ is bounded}
\end{figure}
For any $t\in [0,T)$, let $x_1, x_2$ be points on $M_t=\partial\Omega_t$ such that $d(p,x_1)=\min\{d(p,x): x\in M_t\}$ and $d(p,x_2)=\max\{d(p,x): x\in M_t\}$, where $d(\cdot,\cdot)$ is the distance in the hyperbolic space. Since $\Omega_0$ is contained in the geodesic ball $B_R(p)$ and $\Omega_t\cap B_R(p)\neq \emptyset$, we deduce from $|\Omega_t|=|\Omega_0|$ that $x_1\in B_R(p)$. If $x_2\in B_R(p)$, then the diameter of $\Omega_t$ is bounded from above by $R$. Therefore it suffices to study the case $x_2\notin B_R(p)$. Let $\gamma(s)$ be the geodesic line passing through $x_1$ and $x_2$, i.e., there are numbers $s_1<s_2\in \mathbb{R}$ such that $\gamma(s_1)=x_1$ and $\gamma(s_2)=x_2$. We choose the geodesic plane $\Pi=H_{\gamma(s_0)}$ for some number $s_0\in (s_1,s_2)$ such that $\Pi$ is perpendicular to $\gamma$ and is tangent to the boundary of $B_R(p)$ at $p'\in\partial B_R(p)$. Let $q=\gamma(s_0)$ be intersection point $\gamma\cap\Pi$. By the Alexandrov reflection property, $d(x_2,q)\leq d(q,x_1)$. Then the triangle inequality implies
\begin{align*}
  d(p,x_2) \leq &~ d(p,x_1)+d(x_1,x_2) \\
  \leq  &~ d(p,x_1)+2d(q,x_1)\\
  \leq &~ d(p,x_1)+2(d(q,p')+d(p',p)+d(p,x_1))\leq ~7R,
\end{align*}
where we used the fact $x_1\in B_R(p)$. This shows that the diameter of $\Omega_t$ is uniformly bounded along the flow \eqref{flow-VMCF}.

To estimate the lower bound of the inner radius of $\Omega_t$, we project the domain $\Omega_t$ in the hyperbolic space $\mathbb{H}^{n+1}$ to the unit ball in Euclidean space $\mathbb{R}^{n+1}$ as in  \cite[\S 5]{And-Wei2017-2}. Denote by $\mathbb{R}^{1,n+1}$ the Minkowski spacetime, that is the vector space $\mathbb{R}^{n+2}$ endowed with the Minkowski spacetime metric $\langle \cdot,\cdot\rangle$ given by $ \langle X,X\rangle=-X_0^2+\sum_{i=1}^nX_i^2$ for any vector  $X=(X_0,X_1,\cdots,X_n)\in \mathbb{R}^{n+2}$. Then the hyperbolic space is characterized as
\begin{equation*}
  \mathbb{H}^{n+1}=~\{X\in \mathbb{R}^{1,n+1},~~\langle X,X\rangle=-1,~X_0>0\}
\end{equation*}
An embedding $X:M^n\to \mathbb{H}^{n+1}$ induces an embedding $Y:M^n\to B_1(0)\subset \mathbb{R}^{n+1}$ by
\begin{equation*}
  X~=~\frac{(1,Y)}{\sqrt{1-|Y|^2}}.
\end{equation*}
The induced metrics $g_{ij}^X$ and $g_{ij}^Y$ of $X(M^n)\subset \mathbb{H}^{n+1}$ and $Y(M^n)\subset \mathbb{R}^{n+1}$ are related by
\begin{align*}
  g_{ij}^X   =&\frac 1 {1-|Y|^2}\left(g_{ij}^Y+\frac{\langle Y,\partial_iY\rangle\langle  Y,\partial_jY\rangle }{(1-|Y|^2)}\right)
\end{align*}
Let $\tilde{\Omega}_t\subset B_1(0)$ be the corresponding image of $\Omega_t$ in $B_1(0)\subset \mathbb{R}^{n+1}$, and observe that $\tilde\Omega_t$ is a convex Euclidean domain.   Then the diameter bound of $\Omega_t$ implies the diameter bound on $\tilde{\Omega}_t$. In particular, $|Y|\leq C<1$ for some constant $C$. This implies that the induced metrics $g_{ij}^X$ and $g_{ij}^Y$ are comparable. So the volume of $\tilde{\Omega}_t$  is also bounded below by a constant depending on the volume of $\Omega_0$ and the diameter of $\Omega_t$.  Let $\omega_{\min}(\tilde{\Omega}_t)$ be the minimal width of $\tilde\Omega_t$.  Then the volume of $\tilde{\Omega}_t$ is bounded by the a constant times the $\omega_{\min}(\tilde\Omega_t)(\text{\rm diam}(\tilde\Omega_t)^{n}$, since $\tilde\Omega_t$ is contained in a spherical prism of height $\omega_{\min}(\tilde\Omega_t)$ and radius $\text{\rm diam}(\tilde\Omega_t)$.  It follows that $\omega_{\min}(\tilde{\Omega}_t)$ is bounded from below by a positive constant $C$. Since $\tilde{\Omega}_t$ is strictly convex, an estimate of Steinhagen \cite{Steinhagen} implies that the inner radius $\tilde{\rho}_-(t)$ of $\tilde{\Omega}_t$ is bounded below by $\tilde{\rho}_-(t)\geq c(n)\omega_{\min}\geq C>0$, from which we obtain the uniform positive lower bound on the inner radius $\rho_-(t)$ of $\Omega_t$. This finishes the proof.
\endproof

By \eqref{s4:io-radius1}, the inner radius of $\Omega_t$ is bounded below by a positive constant $c_1$. This implies that there exists a geodesic ball of radius $c_1$ contained in $\Omega_t$ for each $t\in [0,T)$. The same argument as in \cite[Lemma 4.2]{And-Wei2017-2} yields the existence of a geodesic ball with fixed center enclosed by the flow hypersurface on a suitable fixed time interval.

\begin{lem}\label{s4:lem-inball}
Let $M_t$ be a smooth solution of the flow \eqref{flow-VMCF} on $[0,T)$ with the global term $\phi(t)$ given by \eqref{s1:phit}. For any $t_0\in [0,T)$, let $B_{\rho_0}(p_0)$ be the inball of $\Omega_{t_0}$, where $\rho_0=\rho_-(t_0)$. Then
\begin{equation}\label{s4:inball-eqn1}
  B_{\rho_0/2}(p_0)\subset \Omega_t,\quad t\in [t_0, \min\{T,t_0+\tau\})
\end{equation}
for some $\tau$ depending only on $n,\alpha,\Omega_0$.
\end{lem}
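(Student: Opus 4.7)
I would follow the same distance-function strategy used for the analogous Lemma 4.2 of \cite{And-Wei2017-2}. Let
$$
s(t) := \min_{x\in M_t} d_{\mathbb{H}^{n+1}}(p_0, x),
$$
so that $B_{s(t)}(p_0)\subset \Omega_t$ and $s(t_0)\geq \rho_0$. It suffices to show $s'(t)\geq -C$ in the viscosity (or Lipschitz) sense, with $C$ depending only on $n,\alpha,\Omega_0$, because then $s(t)\geq \rho_0-C(t-t_0)\geq \rho_0/2$ as long as $t-t_0\leq \rho_0/(2C)$, and since $\rho_0\geq c_1>0$ by the previous shape estimate, $\tau:=c_1/(2C)$ will do.

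The first step is geometric: pick a point $q_t\in M_t$ realising $s(t)$. The geodesic ball $B_{s(t)}(p_0)$ is tangent to $M_t$ at $q_t$ from inside, so the second fundamental form comparison yields $\kappa_i(q_t)\geq \coth(s(t))$ for every principal curvature $i=1,\dots,n$. Because $f$ is increasing, homogeneous of degree one and normalised to $f(1,\dots,1)=1$, this gives
$$
F^{\alpha}(q_t)\geq f\bigl(\coth s(t),\dots,\coth s(t)\bigr)^\alpha=\coth^\alpha(s(t))\geq \coth^\alpha(2c_2)>0,
$$
where $c_2$ is the outer-radius bound from \eqref{s4:io-radius1} (note $s(t)\leq \operatorname{diam}\Omega_t\leq 2c_2$). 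The evolution of $s$ under $\partial_tX=(\phi(t)-F^{\alpha})\nu$ gives $s'(t)=F^\alpha(q_t)-\phi(t)$, so
$$
s'(t)\geq \coth^{\alpha}(2c_2)-\phi(t).
$$

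The crux is then a uniform upper bound $\phi(t)\leq C'$ with $C'$ depending only on $n,\alpha,\Omega_0$. Here one exploits the positive sectional curvature of $M_t$ together with the bounds $c_1\leq \rho_-(t)\leq \rho_+(t)\leq c_2$: pull back to the Klein-model Euclidean convex body $\tilde\Omega_t\subset B_1(0)$ as in \S\ref{sec:4-1}; this body has uniformly controlled inner and outer radii, hence uniformly bounded boundary area. Using the projection formulae relating the Euclidean metric on $\partial\tilde\Omega_t$ to the hyperbolic one, together with the convexity of $\tilde\Omega_t$ and the fact that $f\leq c(n)(\kappa_1+\cdots+\kappa_n)$ (by homogeneity and monotonicity), one can bound $\int_{M_t}F^\alpha\,d\mu_t$ in terms of $n,\alpha$ and $c_1,c_2$ alone (the case $\alpha\leq 1$ by Jensen or Hölder against the area bound, the case $\alpha>1$ by reducing to integrals of powers of principal curvatures on a uniformly convex surface). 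Dividing by $|M_t|\geq C''>0$ (again from the lower bound on inner radius and the isoperimetric inequality) produces the required bound on $\phi(t)$.

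The main obstacle is the last step: we cannot yet invoke the Tso-type pointwise upper bound on $F^\alpha$ (that bound is proved in \S\ref{sec:4-2} and will make essential use of the present lemma), so the control of $\phi(t)$ must come purely from integrated quantities, using only the shape estimate \eqref{s4:io-radius1} and the convexity encoded in positive sectional curvature. Once $\phi$ is bounded, the ODE comparison for $s(t)$ and the observation that $\rho_0\geq c_1$ finish the proof with $\tau$ independent of the particular $t_0$ and $\rho_0$.
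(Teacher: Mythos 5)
There are two sign errors in the geometric step that together redirect the proof onto an unnecessary path. First, the barrier comparison at the inner tangency goes the wrong way: since $B_{s(t)}(p_0)\subset\Omega_t$ touches $M_t$ at $q_t$ with the same outward normal, $M_t$ encloses the ball and is therefore \emph{less} curved there, so $\kappa_i(q_t)\leq\coth s(t)$ (not $\geq$), which gives the \emph{upper} bound $F^\alpha(q_t)\leq\coth^\alpha s(t)$. Second, the time derivative of $s$ has the wrong sign: since $\partial_tX=(\phi(t)-F^\alpha)\nu$ and $\nu$ at $q_t$ points away from $p_0$, the distance from $p_0$ increases when $\phi-F^\alpha>0$; in the Lipschitz sense $s'(t)=\phi(t)-F^\alpha(q_t)$, not $F^\alpha(q_t)-\phi(t)$.

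With the signs corrected the lemma follows with no bound on $\phi$ at all. Since $\phi(t)>0$ always (it is the average of the positive function $F^\alpha$ over $M_t$),
\begin{equation*}
s'(t)=\phi(t)-F^\alpha(q_t)\ \geq\ -F^\alpha(q_t)\ \geq\ -\coth^\alpha s(t),
\end{equation*}
and ODE comparison from $s(t_0)=\rho_0\geq c_1$ gives $s(t)\geq\rho_0/2$ for $t-t_0\leq\tau:=\tfrac{c_1}{2}\tanh^\alpha(c_1/2)$, a constant depending only on $n,\alpha,\Omega_0$ via \eqref{s4:io-radius1}. Thus the ``crux'' you identify --- a uniform upper bound on $\phi(t)$ from integrated quantities alone --- is not needed, which is fortunate: the sketch you give for $\alpha>1$ would not close easily, since inner- and outer-radius bounds on a convex body do not by themselves control $\int_M E_1^\alpha\,d\mu$ for large $\alpha$. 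The corrected argument is exactly the barrier/inradius argument the paper imports from \cite[Lemma 4.2]{And-Wei2017-2}.
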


\subsection{Upper bound of $F$}\label{sec:4-2}

Now we can use the technique of Tso \cite{Tso85} as in \cite{And-Wei2017-2} to prove the upper bound of $F$ along the flow \eqref{flow-VMCF} provided that $F$ satisfies  Assumption \ref{s1:Asum}. The inequality  \eqref{s1:asum-1} and the fact that each $M_t$ has positive sectional curvature are crucial in the proof.

\begin{thm}\label{s4:F-bd}
Assume that $F$  satisfies the Assumption \ref{s1:Asum}. Then along the flow \eqref{flow-VMCF} with any $\alpha>0$, we have $F\leq C$ for any $t\in [0,T)$, where $C$ depends on $n,\alpha,M_0$ but not on $T$.
\end{thm}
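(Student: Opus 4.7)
My plan is to adapt Tso's auxiliary function technique from \cite{Tso85} to the hyperbolic setting. By Lemma \ref{s4:lem-inball} there exist a point $p_0$ and a constant $\rho_0>0$ such that $B_{\rho_0/2}(p_0)\subset\Omega_t$ for $t\in[t_0,t_0+\tau)$. Let $r=d(\cdot,p_0)$ and $V:=\sinh(r)\partial_r=\bar\nabla\cosh r$, which satisfies $\bar\nabla V=\cosh r\,\bar g$, and define the hyperbolic support function $u:=\langle V,\nu\rangle$. Positive sectional curvature of $M_t$, preserved along the flow by Theorem \ref{s3:thm-2}, implies that $M_t$ bounds a convex region and is therefore starshaped from $p_0$; in particular $u\geq\sinh(\rho_0/2)$ on $M_t$. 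Setting $c:=\tfrac{1}{2}\sinh(\rho_0/2)$ makes the denominator $u-c$ uniformly bounded below by a positive constant, and I then consider the auxiliary function
\begin{equation*}
Q:=\frac{F^\alpha}{u-c}.
\end{equation*}

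Using the evolution equation \eqref{s2:evl-Psi} with $\Psi=F^\alpha$, the Codazzi equation, the homogeneity identity $\dot\Psi^{ij}h_{ij}=\alpha F^\alpha$, and $\bar\nabla^2\cosh r=\cosh r\,\bar g$, one obtains
\begin{align*}
(\partial_t-\dot\Psi^{kl}\nabla_k\nabla_l)F^\alpha &= (F^\alpha-\phi(t))(A-B),\\
(\partial_t-\dot\Psi^{kl}\nabla_k\nabla_l)u &= \cosh r\,(\phi(t)-(1+\alpha)F^\alpha)+uA,
\end{align*}
where $A:=\dot\Psi^{ij}h_i^k h_{kj}=\alpha F^{\alpha-1}\sum_i\dot f^i\kappa_i^2$ and $B:=\dot\Psi^{ij}\delta_{ij}=\alpha F^{\alpha-1}\sum_i\dot f^i$. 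At a spatial maximum of $Q$ the identity $(u-c)\nabla F^\alpha=F^\alpha\nabla u$ causes the mixed gradient terms from the quotient rule to cancel, and combining the remaining zero-order pieces yields
\begin{equation*}
(\partial_t-\dot\Psi^{kl}\nabla_k\nabla_l)Q\big|_{\max}
= -\frac{A\bigl[cF^\alpha+\phi(u-c)\bigr]}{(u-c)^2}
-\frac{(F^\alpha-\phi)B}{u-c}
+\frac{F^\alpha\cosh r\,\bigl[(1+\alpha)F^\alpha-\phi\bigr]}{(u-c)^2}.
\end{equation*}

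Assumption \ref{s1:Asum}(iii) forces $f$ to be inverse concave, so by Lemma \ref{s2:lem1}(ii), $A\geq\alpha F^{\alpha+1}$. Since $c>0$ this produces a leading negative contribution $-c\alpha F^{2\alpha+1}/(u-c)^2$, while the dominant positive contribution is at most $(1+\alpha)F^{2\alpha}\cosh r/(u-c)^2$, strictly lower order in $F$. Whenever $F$ exceeds a constant depending only on $n$, $\alpha$, the diameter bound and $\rho_0$, the negative term dominates and $(\partial_t-\dot\Psi^{kl}\nabla_k\nabla_l)Q|_{\max}<0$, contradicting $(\partial_t-\dot\Psi^{kl}\nabla_k\nabla_l)Q|_{\max}\geq 0$ at a space-time maximum. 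This forces $F\leq C$ at the maximum of $Q$ and therefore everywhere (up to a harmless constant factor, since $u-c$ is bounded above by the diameter bound of Lemma 4.1). Iterating over a finite cover of $[0,T)$ by intervals of length $\tau$ supplied by Lemma \ref{s4:lem-inball} gives the uniform global bound.

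The main obstacle, compared with the h-convex setting of \cite{And-Wei2017-2,Cab-Miq2007,Mak2012,WX}, is the treatment of the lower-order term $-(F^\alpha-\phi)B/(u-c)$, which can become positive when $F^\alpha<\phi$ at the maximum of $Q$: under h-convexity the bound $\kappa_i\geq 1$ gives direct control of $\sum_i\dot f^i$, whereas here only positive sectional curvature is available. This is exactly where positive sectional curvature is indispensable beyond merely guaranteeing starshapedness: the pointwise inequality $\kappa_i\kappa_j>1$ prevents the smallest principal curvature from collapsing unless $\kappa_{\max}$ (and hence $F$) grows, which together with Euler's identity $\sum_i\dot f^i\kappa_i=F$ keeps $\sum_i\dot f^i$ (and thus $B$) of strictly lower order than the dominant negative term in $F$. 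The inequality \eqref{s1:asum-1} of Assumption \ref{s1:Asum} further enters when rearranging the zero-order expressions above.
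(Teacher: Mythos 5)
Your setup — the auxiliary function $Q=F^\alpha/(u-c)$, the inball lemma, the lower bound on $u$ from star-shapedness, the cancellation of cross terms at the maximum, and the inverse-concavity bound $A\geq\alpha F^{\alpha+1}$ — all agrees with the paper, and your explicit formula for the zero-order terms of $(\partial_t-\dot\Psi^{kl}\nabla_k\nabla_l)Q$ at the maximum is algebraically correct. However, there is a genuine gap in how you handle the term
\begin{equation*}
-\frac{(F^\alpha-\phi)\,B}{u-c}, \qquad B=\alpha F^{\alpha-1}\sum_i\dot f^i,
\end{equation*}
which you correctly identify as the obstacle when $F^\alpha<\phi$. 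You argue heuristically that positive sectional curvature plus Euler's identity $\sum_i\dot f^i\kappa_i=F$ keeps $B$ of strictly lower order than $F^{2\alpha+1}$, but this is not carried out, and in fact it is false in general under Assumption~\ref{s1:Asum}: for instance with $f=E_n^{1/n}$ one has $\sum_i\dot f^i=\frac{f}{n}\sum_i\kappa_i^{-1}$, and in the regime allowed by positive sectional curvature (e.g.\ $\kappa_1\approx\kappa_n^{-1}$, $\kappa_2=\cdots=\kappa_n\approx\kappa_n$) one finds $\kappa_n\approx F^{n/(n-2)}$, so $B\gtrsim F^{\alpha+n/(n-2)}$ and the ambiguous-sign $B$-term can outgrow the good negative term $-c\alpha F^{2\alpha+1}/(u-c)^2$ when $n>3$. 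Your grouping therefore cannot close.

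The paper avoids this by a different regrouping: it keeps the zero-order terms as
\begin{equation*}
-\frac{\phi(t)}{u-c}\Bigl(\dot\Psi^{ij}(h_i^k h_k^j-\delta_i^j)+W\cosh r\Bigr)
+\frac{(1+\alpha)\Psi^2\cosh r}{(u-c)^2}-\frac{c\Psi A}{(u-c)^2}-WB,
\end{equation*}
and the key step is the pointwise inequality $\dot\Psi^{ij}(h_i^k h_k^j-\delta_i^j)=A-B\geq 0$. With principal curvatures ordered $\kappa_1\leq\cdots\leq\kappa_n$ and positive sectional curvature ($\kappa_i>1$ for $i\geq 2$, $\kappa_1\kappa_2>1$), one bounds
\begin{equation*}
\sum_i\dot f^i(\kappa_i^2-1)\geq\dot f^2(\kappa_2^2-1)+\dot f^1(\kappa_1^2-1)
\geq\dot f^1\Bigl(\tfrac{\kappa_1}{\kappa_2}(\kappa_2^2-1)+(\kappa_1^2-1)\Bigr)
=\dot f^1\,\frac{(\kappa_1\kappa_2-1)(\kappa_1+\kappa_2)}{\kappa_2}\geq 0,
\end{equation*}
where the middle step uses Assumption~\ref{s1:Asum}(ii) in the form $\dot f^2\kappa_2\geq\dot f^1\kappa_1$. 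Once $A-B\geq 0$ is established, the entire $\phi(t)$-contribution and the $-WB$ term are nonpositive and can be discarded, leaving only the bounded positive term $(1+\alpha)W^2\cosh(2c_2)$ against the dominant negative term $-c\alpha W^2 F$. This is where Assumption~\ref{s1:Asum}(ii) and positive sectional curvature actually enter, and it is the step your proposal is missing. Everything else in your argument (the Tso-type iteration, the conversion of a bound on $Q$ to a bound on $F$ via the outer radius bound) is fine.
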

\proof
For any given $t_0\in [0,T)$, let $B_{\rho_0}(p_0)$ be the inball of $\Omega_{t_0}$, where $\rho_0=\rho_-(t_0)$.
Consider the support function $u(x,t)=\sinh r_{p_0}(x)\langle \partial r_{p_0},\nu\rangle $ of $M_t$ with respect to the point $p_0$, where $r_{p_0}(x)$ is the distance function in $\mathbb{H}^{n+1}$ from the point $p_0$. Since $M_t$ is strictly convex,  by  \eqref{s4:inball-eqn1},
\begin{equation}\label{s4:sup-1}
  u(x,t)~\geq ~\sinh(\frac{\rho_0}2)~=:~2c
\end{equation}
on $M_t$ for any $t\in[t_0,\min\{T,t_0+\tau\})$. On the other hand, the estimate \eqref{s4:io-radius1} implies that $u(x,t)\leq \sinh(2c_2)$ on $M_t$ for all $t\in[t_0,\min\{T,t_0+\tau\})$. Recall that the support function $u(x,t)$ evolves by
\begin{align}\label{s4:evl-u}
   \frac{\partial}{\partial t}u =& \dot{\Psi}^{kl}\nabla_k\nabla_lu+\cosh r_{p_0}(x)\left(\phi(t)-\Psi-\dot{\Psi}^{kl}h_{kl}\right)+\dot{\Psi}^{ij}h_i^kh_{kj}u.
\end{align}
as we computed in \cite{And-Wei2017-2}, where $\Psi=F^{\alpha}(\mathcal{W})$. Define the auxiliary function
\begin{equation*}
  W(x,t)=\frac {\Psi(x,t)}{u(x,t)-c},
\end{equation*}
which is well-defined on $M_t$ for all $t\in [t_0,\min\{T,t_0+\tau\})$. Combining \eqref{s2:evl-Psi} and \eqref{s4:evl-u}, we have
\begin{align*}
   \frac{\partial}{\partial t}W= &\dot{\Psi}^{ij}\left(\nabla_j\nabla_iW+\frac 2{u-c}\nabla_iu\nabla_jW\right)\nonumber \\
  &\quad-\frac{\phi(t)}{u-c}\left( \dot{\Psi}^{ij}(h_i^kh_{k}^j-\delta_i^j)+W\cosh r_{p_0}(x)\right)\nonumber\\
  &\quad +\frac{\Psi}{(u-c)^2}(\Psi+\dot{\Psi}^{kl}h_{kl})\cosh r_{p_0}(x)-\frac{c\Psi}{(u-c)^2}\dot{\Psi}^{ij}h_i^kh_{k}^j-W\dot{\Psi}^{ij}\delta_i^j.
\end{align*}
By homogeneity of $\Psi$ and the inverse-concavity of $F$, we have $\Psi+\dot{\Psi}^{kl}h_{kl}=(1+\alpha)\Psi$ and $\dot{\Psi}^{ij}h_i^kh_{k}^j\geq \alpha F^{\alpha+1}$. Moreover, by the inequality \eqref{s1:asum-1} and the fact that $\kappa_1\kappa_2>1$, we have
\begin{align*}
  \dot{\Psi}^{ij}(h_i^kh_{k}^j-\delta_i^j)=&~\alpha f^{\alpha-1}\sum_{i=1}^n\dot{f}^i(\kappa_i^2-1)\\
  \geq &~ \alpha f^{\alpha-1}\left(\dot{f}^2(\kappa_2^2-1)+\dot{f}^1(\kappa_1^2-1)\right)\displaybreak[0]\\
  \geq &~\alpha f^{\alpha-1}\dot{f}^1\left(\frac{\kappa_1}{\kappa_2}(\kappa_2^2-1)+(\kappa_1^2-1)\right)\\
  =&~\alpha f^{\alpha-1}\dot{f}^1\kappa_2^{-1}(\kappa_1\kappa_2-1)(\kappa_1+\kappa_2)~\geq~0,
\end{align*}
where we used $\kappa_i\geq 1$ for $i=2,\cdots,n$ in the first inequality.  Then we arrive at
\begin{align}\label{s4:evl-W-1-2}
   \frac{\partial}{\partial t}W\leq & ~\dot{\Psi}^{ij}\left(\nabla_j\nabla_iW+\frac 2{u-c}\nabla_iu\nabla_jW\right)+(\alpha+1)W^2\cosh r_{p_0}(x)-\alpha cW^2F.
\end{align}

The remaining proof of Theorem \ref{s4:F-bd} is the same with \cite[\S 4]{And-Wei2017-2}. We include it here for convenience of the readers. Using \eqref{s4:sup-1} and the upper bound $r_{p_0}(x)\leq 2c_2$, we obtain from \eqref{s4:evl-W-1-2} that
\begin{align}\label{s4:evl-W-3}
   \frac{\partial}{\partial t}W\leq &~ \dot{\Psi}^{ij}\left(\nabla_j\nabla_iW+\frac 2{u-c}\nabla_iu\nabla_jW\right)\nonumber \\
  &\quad +W^2\left((\alpha+1)\cosh (2c_2)-\alpha c^{1+\frac 1{\alpha}}W^{1/{\alpha}}\right)
\end{align}
holds on $[t_0,\min\{T,t_0+\tau\})$. Let $\tilde{W}(t)=\sup_{M_t}W(\cdot,t)$. Then \eqref{s4:evl-W-3} implies that
\begin{equation*}
  \frac d{dt}\tilde{W}(t)\leq \tilde{W}^2\left((\alpha+1)\cosh (2c_2)-\alpha c^{1+\frac 1{\alpha}}\tilde{W}^{1/{\alpha}}\right)
\end{equation*}
from which it follows from the maximum principle that
\begin{equation}\label{s4:evl-W-4}
  \tilde{W}(t)\leq \max\left\{ \left(\frac {2(1+\alpha)\cosh(2c_2)}{\alpha}\right)^{\alpha}c^{-(\alpha+1)}, \left(\frac {2}{1+\alpha}\right)^{\frac{\alpha}{1+\alpha}}c^{-1}(t-t_0)^{-\frac{\alpha}{1+\alpha}}\right\}.
\end{equation}
Then the upper bound on $F$ follows from \eqref{s4:evl-W-4} and the facts that
\begin{equation*}
  c=\frac 12\sinh(\frac{\rho_0}2)\geq \frac 12\sinh(\frac{c_1}2)
\end{equation*}
and $u-c\leq 2c_2$, where $c_1,c_2$ are constants in \eqref{s4:io-radius1} depending only on $n,M_0$.
\endproof

\subsection{Long time existence and convergence}\label{sec:4-3}
In this subsection, we complete the proof of Theorem \ref{thm1-1}. Firstly, in the case (i) of Theorem \ref{thm1-1}, we can deduce directly a uniform estimate on the principal curvatures of $M_t$. In fact, since $f(\kappa)$ is bounded from above by Theorem \ref{s4:F-bd},
\begin{align}\label{s4:f-condi-1}
  C \geq & ~f(\kappa_1,\kappa_2,\cdots,\kappa_n)~ \geq ~f(\kappa_1,\frac 1{\kappa_1},\cdots,\frac 1{\kappa_n}),
\end{align}
where in the second inequality we used the facts that $f$ is increasing in each argument and $\kappa_i\kappa_1>1$ for $i=2,\cdots,n$. Combining \eqref{s4:f-condi-1} and \eqref{thm1-cond1} gives that $\kappa_1\geq C>0$ for some uniform constant $C$.  Since the dual function $f_*$ of $f$ vanishes on the boundary of the positive cone $\Gamma_+$ and $f_*(\tau_i)=1/f(\kappa_i)\geq C>0$ by Theorem \ref{s4:F-bd}, the upper bound on $\tau_i=1/{\kappa_i}\leq C$ gives the lower bound on $\tau_i$, which is equivalent to the upper bound of $\kappa_i$. In summary, we obtain uniform two-sided positive bound on the principal curvatures of $M_t$ along the flow \eqref{flow-VMCF} in the case (i) of Theorem \ref{thm1-1}.

The examples of $f$ satisfying Assumption \ref{s1:Asum} and the condition (i) of Theorem \ref{thm1-1} include
\begin{itemize}
  \item[a).] $n\geq 2$, $f=n^{-1/k}S_k^{1/k}$ with $k>0$;
  \item[b).] $n\geq 3$, $f= E_k^{1/k}$ with  $k=1,\cdots,n$;
  \item[c).] $n=2, f=(\kappa_1+\kappa_2)/2$.
\end{itemize}

We next consider the case (ii) of Theorem \ref{thm1-1}, i.e., $n=2, f=(\kappa_1\kappa_2)^{1/2}$. In general, the estimate $1\leq \kappa_1\kappa_2=f(\kappa)\leq C$ can not prevent $\kappa_2$ from going to infinity. Instead, we will prove that the pinching ratio $\kappa_2/\kappa_1$ is bounded from above along the flow \eqref{flow-VMCF} with $f=(\kappa_1\kappa_2)^{1/2}$ and $\alpha\in[1/2,2]$. This together with the estimate $1\leq \kappa_1\kappa_2\leq C$ yields the uniform estimate on $\kappa_1$ and $\kappa_2$.

\begin{lem}
In the case $n=2, f=(\kappa_1\kappa_2)^{1/2}$ and $\alpha\in [1/2,2]$, the principal curvatures $\kappa_1,\kappa_2$ of $M_t$ satisfy
\begin{equation}\label{s4:lem-pinch}
  0<\frac 1C~\leq \kappa_1\leq \kappa_2\leq C,\quad \forall~t\in [0,T)
\end{equation}
for some positive constant $C$ along the flow \eqref{flow-VMCF}.
\end{lem}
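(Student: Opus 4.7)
By Theorem~\ref{s3:thm-2} and Theorem~\ref{s4:F-bd}, we have $1\le\kappa_1\kappa_2=f^2\le C$ uniformly along the flow, so \eqref{s4:lem-pinch} reduces to a uniform upper bound on the gap $\kappa_2-\kappa_1$: once $\kappa_2-\kappa_1\le M$, $\kappa_1(\kappa_1+M)\ge\kappa_1\kappa_2\ge 1$ forces $\kappa_1\ge(\sqrt{M^2+4}-M)/2>0$, and then $\kappa_2\le C/\kappa_1$.

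The plan is to apply the maximum principle to the symmetric auxiliary function
\begin{equation*}
G \;:=\; (\kappa_1\kappa_2)^{\alpha-2}(\kappa_2-\kappa_1)^2,
\end{equation*}
which is comparable to $(\kappa_2-\kappa_1)^2$ since $\kappa_1\kappa_2\in[1,C]$. The exponent $\alpha-2$ is the natural choice, parallelling the pinching quantity used in \cite{And1999,And-chen2012} for contracting flows by powers of Gauss curvature, and is tuned so that the gradient contributions in the evolution of $G$ can be controlled for $\alpha\in[1/2,2]$. At a spatial maximum $(x_0,t_0)$ of $G$, necessarily with $\kappa_1<\kappa_2$, I would work in a principal frame, compute $(\partial_t-\dot\Psi^{kl}\nabla_k\nabla_l)G$ using \eqref{s2:evl-h} and the second-derivative identity \eqref{s2:F-ddt} applied to $\Psi=(\kappa_1\kappa_2)^{\alpha/2}$, and use the relation $\nabla_iG=0$ to eliminate $\nabla_ih_{11}$ in favour of $\nabla_ih_{22}$.

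The zero-order part of $(\partial_t-\dot\Psi^{kl}\nabla_k\nabla_l)G$ at the maximum can be written down explicitly using $\dot f^i=f/(2\kappa_i)$; a short calculation shows that it equals
\begin{equation*}
(\kappa_1+\kappa_2)\,G\cdot f^{-2}\bigl[2f^\alpha-\phi(t)\bigl(\alpha f^2+(2-\alpha)\bigr)\bigr].
\end{equation*}
The pointwise bound $f\ge 1$ gives $\phi(t)\ge 1$, while the elementary inequality $\alpha f^2+(2-\alpha)-2f^\alpha\ge 0$ for $f\ge 1$ and $\alpha\in[0,2]$ (whose left-hand side vanishes at $f=1$ and has non-negative derivative $2\alpha f(1-f^{\alpha-2})$) gives $2f^\alpha\le\alpha f^2+(2-\alpha)\le\phi(\alpha f^2+(2-\alpha))$, so this zero-order term is non-positive. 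The main obstacle is the gradient quadratic form, which collects the contributions of $\ddot\Psi^{kl,pq}\nabla h\,\nabla h$ from \eqref{s2:evl-h}, the Hessian of the prefactor $f^{2\alpha-4}$, and the off-diagonal terms $\sum_{k\ne l}(\dot f^k-\dot f^l)/(\kappa_k-\kappa_l)(\nabla h_{kl})^2$ from \eqref{s2:F-ddt}. After substituting the two linear relations from $\nabla_iG=0$, this becomes a concrete quadratic form in two variables whose coefficients depend only on $\kappa_1,\kappa_2,\alpha$; the claim to verify is that it is non-positive exactly for $\alpha\in[1/2,2]$, with $\alpha=1/2$ and $\alpha=2$ being the thresholds at which the form degenerates, and $\alpha=1$ matching the classical Gauss-curvature computation. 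Granted that algebraic step, we obtain $(\partial_t-\dot\Psi^{kl}\nabla_k\nabla_l)G\le 0$ at any spatial maximum of $G$, so the maximum principle gives $\max_{M_t}G\le\max_{M_0}G$ for all $t\ge 0$, and the desired pinching estimate \eqref{s4:lem-pinch} follows.
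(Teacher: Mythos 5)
Your proposal follows the same strategy as the paper, and several pieces are carried out correctly: the reduction of \eqref{s4:lem-pinch} to a uniform upper bound on $\kappa_2-\kappa_1$ (equivalently on $H^2-4K$), the choice of the auxiliary function $G=(\kappa_1\kappa_2)^{\alpha-2}(\kappa_2-\kappa_1)^2=K^{\alpha-2}(H^2-4K)$, and the identification and sign analysis of the zero--order term. Your formula
$(\kappa_1+\kappa_2)G\,f^{-2}\bigl[2f^\alpha-\phi(t)(\alpha f^2+(2-\alpha))\bigr]$
agrees with the last line of \eqref{s4:G-evl} after substituting $K=f^2$, and your elementary argument that $\alpha f^2+(2-\alpha)-2f^\alpha\geq 0$ for $f\geq 1$, $\alpha\in[0,2]$ (monotonicity in $f$ from $f=1$) is a valid alternative to the paper's argument (convexity of $\alpha\mapsto 2K^{\alpha/2}-\alpha K+\alpha-2$ with vanishing endpoints at $\alpha=0,2$).

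The genuine gap is that you never actually carry out the gradient--term computation, which is the substantive part of the lemma and occupies most of the paper's proof. The sentence ``Granted that algebraic step'' elides precisely the step that justifies the restriction $\alpha\in[1/2,2]$. The paper does this by using $\nabla_iG=0$ (two relations) to show $\nabla_ih_{11}$ and $\nabla_ih_{22}$ are proportional (with explicit proportionality functions $g_1,g_2$), and then rewrites the full gradient contribution $Q_1$ as $q_1g_2^{-2}(\nabla_1h_{11})^2+q_2g_1^{-2}(\nabla_2h_{22})^2$, where
$q_1=-32(\alpha-1)^2K^3(H^2-4K)\kappa_2-4(2-\alpha)(2\alpha-1)H^2K^2(H^2-4K)\kappa_2-4(2-\alpha)H^2K(H^2-4K)\kappa_2^3$
and similarly for $q_2$. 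The sign restrictions $2-\alpha\geq 0$ and $(2-\alpha)(2\alpha-1)\geq 0$ emerging from these coefficients are exactly where $\alpha\in[1/2,2]$ enters; without exhibiting them, the proof of the lemma is not complete. Your heuristic remark that the form ``degenerates'' at $\alpha=1/2$ and $\alpha=2$ is also not quite right: at $\alpha=1/2$ only the $(2\alpha-1)$ factor vanishes while the other two terms of $q_i$ remain strictly negative, and at $\alpha=2$ the $(2-\alpha)$ terms vanish while the $-32(\alpha-1)^2$ term does not; and ``non-positive exactly for $\alpha\in[1/2,2]$'' is stronger than what is (or needs to be) shown, since for $\alpha$ slightly outside $[1/2,2]$ the competing signs of the three terms of $q_i$ are not immediately decisive. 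To complete the proof you must actually perform the substitution from $\nabla_iG=0$, reduce $Q_1$ to the two-variable quadratic form, and verify non-positivity of the resulting coefficients.
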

\proof
In this case, $\Psi(\mathcal{W})=\psi(\kappa)=K^{\alpha/2}$, where $K=\kappa_1\kappa_2$ is the Gauss curvature. The derivatives of $\psi$ with respect to $\kappa_i$ are listed in the following:
\begin{align}
  \dot{\psi}^1 =&~ \frac{\alpha}2K^{\frac{\alpha}2-1}\kappa_2,\qquad  \dot{\psi}^2 = \frac{\alpha}2K^{\frac{\alpha}2-1}\kappa_1\label{s4:psi-d}\\
  \ddot{\psi}^{11} =& ~\frac{\alpha}2(\frac{\alpha}2-1)K^{\frac{\alpha}2-2}\kappa_2^2,\qquad
  \ddot{\psi}^{22} =~\frac{\alpha}2(\frac{\alpha}2-1)K^{\frac{\alpha}2-2}\kappa_1^2\\
  \ddot{\psi}^{12} =&~ \ddot{\psi}^{21}=~\frac{\alpha^2}4K^{\frac{\alpha}2-1}.  \label{s4:psi-dd}
\end{align}
Then we have
\begin{align}
  \dot{\Psi}^{ij}h_i^kh_k^j= &~ \sum_{i=1}^n\dot{\psi}^i\kappa_i^2=\frac{\alpha}2K^{\alpha/2}H \label{s4:psi-d1}\\
   \dot{\Psi}^{ij}\delta_i^j= &~ \sum_{i=1}^n\dot{\psi}^i=\frac{\alpha}2K^{\frac{\alpha}2-1}H,\qquad \dot{\Psi}^{ij}h_i^j=\alpha K^{\alpha/2},\label{s4:psi-d2}
\end{align}
where $H=\kappa_1+\kappa_2$ is the mean curvature.

To prove the estimate \eqref{s4:lem-pinch}, we define a function
\begin{equation*}
  G=~K^{\alpha-2}(H^2-4K)
\end{equation*}
and aim to prove that $G(x,t)\leq \max_{M_0}G(x,0)$ by maximum principle. Using \eqref{s4:psi-d1} and \eqref{s4:psi-d2}, the evolution equations \eqref{s2:evl-Psi} and \eqref{s2:evl-h} imply that
\begin{equation}\label{s4:K-evl}
   \frac{\partial}{\partial t}K  =\dot{\Psi}^{kl}\nabla_k\nabla_lK+(\frac{\alpha}2-1)K^{-1}\dot{\Psi}^{kl}\nabla_kK\nabla_lK+(K^{\alpha/2}-\phi(t))(K-1)H.
\end{equation}
and
\begin{align}\label{s4:H-evl}
   \frac{\partial}{\partial t}H =&\dot{\Psi}^{kl}\nabla_k\nabla_lH+\ddot{\Psi}^{kl,pq}\nabla_ih_{kl}\nabla^ih_{pq}\nonumber\\
   &\quad +K^{\frac{\alpha}2-1}(K+\frac{\alpha}2(1-k))(H^2-4K)+2K^{\alpha/2}(K-1)\nonumber\\
   &\quad -\phi(t)(H^2-2K-2).
\end{align}
By a direct computation using \eqref{s4:K-evl} and \eqref{s4:H-evl}, we obtain the evolution equation of $G$ as follows:
\begin{align}\label{s4:G-evl}
   \frac{\partial}{\partial t}G =&\dot{\Psi}^{kl}\nabla_k\nabla_lG-2(\alpha-2)K^{-1}\dot{\Psi}^{kl}\nabla_kK\nabla_lG\nonumber\\
   &\quad +(\alpha-2)(\frac{3\alpha}2-2)K^{\alpha-4}\dot{\Psi}^{kl}\nabla_kK\nabla_lK(H^2-4K)\nonumber\\
   &\quad -2K^{\alpha-2}\dot{\Psi}^{kl}\nabla_kH\nabla_lH-2(\alpha-2)K^{\alpha-3}\dot{\Psi}^{kl}\nabla_kK\nabla_lK\nonumber\\
   &\quad +2HK^{\alpha-2}\ddot{\Psi}^{kl,pq}\nabla_ih_{kl}\nabla^ih_{pq}\nonumber\\
   &\quad +2HK^{\frac{3\alpha}2-3}(H^2-4K)-HK^{\alpha-3}(H^2-4K)(\alpha K+2-\alpha)\phi(t)
\end{align}

We will apply the maximum principle to prove that $\max_{M_t}G$ is non-increasing in time along the flow \eqref{flow-VMCF}. We first look at the zero order terms of \eqref{s4:G-evl}, i.e., the terms in the last line of \eqref{s4:G-evl} which we denote by $Q_0$. Since $K=\kappa_1\kappa_2>1$ by Theorem \ref{s3:thm-2}, we have
\begin{equation*}
  \phi(t)=\frac 1{|M_t|}{\int_{M_t}K^{\alpha/2}}~\geq 1,\quad \mathrm{and}\quad \alpha K+2-\alpha>2.
\end{equation*}
We also have $H^2-4K=(\kappa_1-\kappa_2)^2\geq 0$. Then
\begin{align*}
  Q_0 \leq &~ 2HK^{\frac{3\alpha}2-3}(H^2-4K)-HK^{\alpha-3}(H^2-4K)(\alpha K+2-\alpha) \\
   =& ~HK^{\alpha-3}(H^2-4K)\biggl(2K^{\alpha/2}-\alpha K+\alpha-2\biggr).
\end{align*}
For any $K>1$, denote $f(\alpha)=2K^{\alpha/2}-\alpha K+\alpha-2$. Then $f(2)=f(0)=0$ and $f(\alpha)$ is a convex function of $\alpha$. Therefore $f(\alpha)\leq 0$ and $Q_0\leq 0$ provided that $\alpha\in [0,2]$.

At the critical point of $G$, we have $\nabla_iG=0$ for all $i=1,2$, which is equivalent to
\begin{equation}\label{s4:G-grad}
  2H\nabla_iH=\left(4(\alpha-1)-(\alpha-2)K^{-1}H^2\right)\nabla_iK.
\end{equation}
Then the gradient terms (denoted by $Q_1$) of \eqref{s4:G-evl} at the critical point of $G$ satisfy
\begin{align}\label{s4:Q1}
  Q_1K^{3-\alpha}= & \biggl(-8(\alpha-1)^2\frac{K}{H^2}-2(\alpha-1)(\alpha-2)+(\alpha-1)(\alpha-2)\frac{H^2}K\biggr)\nonumber\\
  &\quad \times\dot{\Psi}^{kl}\nabla_kK\nabla_lK+2HK\ddot{\Psi}^{kl,pq}\nabla_ih_{kl}\nabla^ih_{pq}.
\end{align}
Using \eqref{s4:psi-d} -- \eqref{s4:psi-dd}, we have
\begin{align*}
 \dot{\Psi}^{kl}\nabla_kK\nabla_lK=&~  \frac{\alpha}2K^{\frac{\alpha}2-1}(\kappa_2(\nabla_1K)^2+\kappa_1(\nabla_2K)^2) \\
   \ddot{\Psi}^{kl,pq}\nabla_ih_{kl}\nabla^ih_{pq}=&~\frac{\alpha}2(\frac{\alpha}2-1)K^{\frac{\alpha}2-2}\sum_{i=1}^2(\kappa_2^2(\nabla_ih_{11})^2+\kappa_1^2(\nabla_ih_{22})^2)\\
&\quad +\frac{\alpha^2}2K^{\frac{\alpha}2-1}\sum_{i=1}^2\nabla_ih_{11}\nabla_ih_{22}-\alpha K^{\frac{\alpha}2-1}\sum_{i=1}^2(\nabla_ih_{12})^2\\
=&~\frac{\alpha}2(\frac{\alpha}2-1)K^{\frac{\alpha}2-2}((\nabla_1K)^2+(\nabla_2K)^2)\\
&\quad +\alpha K^{\frac{\alpha}2-1}\biggl(\nabla_1h_{11}\nabla_1h_{22}-(\nabla_1h_{12})^2+\nabla_2h_{11}\nabla_2h_{22}-(\nabla_2h_{12})^2\biggr)
\end{align*}
The equation \eqref{s4:G-grad} implies that $\nabla_ih_{11}$ and $\nabla_ih_{22}$ are linearly dependent, i.e., there exist functions $g_1,g_2$ such that
\begin{equation}\label{s4:G-grad-1}
 g_1\nabla_ih_{11}=g_2\nabla_ih_{22}.
\end{equation}
The functions $g_1,g_2$  can be expressed explicitly as follows:
\begin{align*}
  g_1=&2HK-(4(\alpha-1)K+(2-\alpha)H^2)\kappa_2,\\
  g_2=&-2HK+(4(\alpha-1)K+(2-\alpha)H^2)\kappa_1.
\end{align*}
Without loss of generality, we can assume that both $g_1$ and $g_2$ are not equal to zero at the critical point of $G$. In fact, if $g_1=0$, then
\begin{align}\label{s4:g1}
  0=g_1  = & ~\biggl((\alpha-2)(H^2-4K)+2H\kappa_1-4K\biggr)\kappa_2\nonumber\\
  =&~\biggl((\alpha-2)(\kappa_1-\kappa_2)+2\kappa_1\biggr)(\kappa_1-\kappa_2)\kappa_2.
\end{align}
Since $\alpha\leq 2$, we have $(\alpha-2)(\kappa_1-\kappa_2)+2\kappa_1\geq 2\kappa_1>0$. Thus \eqref{s4:g1} is equivalent to $\kappa_2=\kappa_1$ and we have nothing to prove.

By the equation \eqref{s4:G-grad-1}, we have
\begin{align*}
  (\nabla_1K)^2 =& (\kappa_2+g_2^{-1}g_1\kappa_1)^2(\nabla_1h_{11})^2 =  g_2^{-2}4H^2K^2(H^2-4K) (\nabla_1h_{11})^2,\\
   (\nabla_2K)^2 =& (\kappa_2g_1^{-1}g_2+\kappa_1)^2(\nabla_2h_{22})^2  =  g_1^{-2}4H^2K^2(H^2-4K) (\nabla_2h_{22})^2.
\end{align*}
Using the Codazzi identity, the equation \eqref{s4:G-grad-1} also implies that
\begin{align*}
&\nabla_1h_{11}\nabla_1h_{22}-(\nabla_1h_{12})^2+\nabla_2h_{11}\nabla_2h_{22}-(\nabla_2h_{12})^2\\
=&~\nabla_1h_{11}\nabla_1h_{22}-(\nabla_1h_{22})^2+\nabla_2h_{11}\nabla_2h_{22}-(\nabla_2h_{11})^2\\
=&~ g_2^{-2}g_1(g_2-g_1)(\nabla_1h_{11})^2 +g_1^{-2}g_2(g_1-g_2)(\nabla_2h_{22})^2\\
  = & (2-\alpha)(H^2-4K)\biggl(g_2^{-2}g_1(\nabla_1h_{11})^2+g_1^{-2}g_2(\nabla_2h_{22})^2\biggr).
\end{align*}
Therefore we can write the right hand side of \eqref{s4:Q1} as linear combination of $(\nabla_{1}h_{11})^2$ and $(\nabla_2h_{22})^2$:
 \begin{align}\label{s4:Q1-2}
  \frac 2{\alpha}Q_1K^{4-3\alpha/2}= &  q_{1}g_2^{-2}(\nabla_{1}h_{11})^2+q_2g_1^{-2}(\nabla_2h_{22})^2,
  \end{align}
where the coefficients $q_1,q_2$ satisfy
  \begin{align*}
  q_1=&\biggl(-8(\alpha-1)^2\frac{K^2}{H^2}+2(\alpha-1)(\alpha-2)K+(\alpha-2)H^2\biggr)4H^2K(H^2-4K)\kappa_2\nonumber\\
  &\quad +4(2-\alpha)H^3K^2(H^2-4K)\\
  =& -32(\alpha-1)^2K^3(H^2-4K)\kappa_2-4(2-\alpha)(2\alpha-1)H^2K^2(H^2-4K)\kappa_2\\
  &\quad -4(2-\alpha)H^2K(H^2-4K)\kappa_2^3
\end{align*}
and
 \begin{align*}
  q_2=& -32(\alpha-1)^2K^3(H^2-4K)\kappa_1-4(2-\alpha)(2\alpha-1)H^2K^2(H^2-4K)\kappa_1\\
  &\quad -4(2-\alpha)H^2K(H^2-4K)\kappa_1^3.
\end{align*}
It can be checked directly that $q_1$ and $q_2$ are both non-positive if $\alpha\in [1/2,2]$. Thus the gradient terms $Q_1$ of \eqref{s4:G-evl} are non-positive at a critical point of $G$ if $\alpha\in [1/2,2]$. The maximum principle implies that $\max_{M_t}G$ is non-increasing in time. It follows that $G(x,t)\leq \max_{M_0}G(x,0)$. Since $1<K\leq C$ for some constant $C>0$ by Theorem \ref{s3:thm-2} and Theorem \ref{s4:F-bd}, we have
\begin{equation}\label{s4:H-bd}
  H^2= ~4K+K^{\alpha-2}G\leq ~C.
\end{equation}
Finally, the estimate \eqref{s4:lem-pinch} follows from \eqref{s4:H-bd} and $K>1$ immediately.
\endproof

Now we have proved that the principal curvatures $\kappa_i$ of $M_t$ satisfy the uniform estimate $0<1/C\leq \kappa_i\leq C$ for some constant $C>0$, which is equivalent to the $C^2$ estimate for $M_t$. Since the functions $f$ we considered in Theorem \ref{thm1-1} are inverse-concave, we can apply an argument similar to that in \cite[\S 5]{And-Wei2017-2} to derive higher regularity estimates. The standard continuation argument then implies the long time existence of the flow, and the argument in \cite[\S 6]{And-Wei2017-2} implies the smooth convergence to a geodesic sphere as time goes to infinity.

\section{Horospherically convex regions}\label{sec:h-convex}

In this section we will investigate some of the properties of horospherically convex regions in hyperbolic space (that is, regions which are given by the intersection of a collection of horo-balls).  In particular, for such regions we define a horospherical Gauss map, which is a map to the unit sphere, and we show that each horospherically convex region is completely described in terms of a scalar function on the sphere which we call the \emph{horospherical support function}.  There are interesting formal similarities between this situation and that of convex Euclidean bodies.  For the purposes of this paper the main result we need is that the modified Quermassintegrals are monotone with respect to inclusion for horospherically convex domains.  However we expect that the description of horospherically convex regions which we develop here will be useful in further investigations beyond the scope of this paper.

We remark that a similar development is presented in \cite{EGM}, but in a slightly different context:  In that paper the `horospherically convex' regions are those which are intersections of complements of horo-balls (corresponding to principal curvatures greater than $-1$ everywhere, while we deal with regions which are intersections of horo-balls, corresponding to principal curvatures greater than $1$.  Our condition is more stringent but is more useful for the evolution equations we consider here.

\subsection{The horospherical Gauss map}\label{sec:5-1}

The horospheres in hyperbolic space are the submanifolds with constant principal curvatures equal to $1$ everywhere.
If we identify ${\mathbb H}^{n+1}$ with the future time-like hyperboloid in Minkowski space $\RR^{n+1,1}$, then the condition of constant principal curvatures equal to $1$ implies that the null vector $\bar\e := X-\nu$ is constant on the hypersurface, since we have ${\mathcal W}={\mathrm I}$, and hence
$$
D_v\bar\e = D_v(X-\nu) = DX(({\mathrm I}-{\mathcal W})(v)) = 0
$$
for all tangent vectors $v$.  Then we observe that
$$
X\cdot\bar\e = X\cdot (X-\nu) = -1,
$$
from which it follows that the horosphere is the intersection of the null hyperplane $\{X:\ X\cdot\bar\e=-1\}$ with the hyperboloid ${\mathbb H}^{n+1}$.  The horospheres are therefore in one-to-one correspondence with points $\bar\e$ in the future null cone, which are given by $\{\bar\e=\lambda(\e,1):\ \e\in S^n,\ \lambda>0\}$, and there is a one-parameter family of these for each $\e\in S^n$.  For convenience we parametrise these by their signed geodesic distance $s$ from the `north pole' $N=(0,1)\in{\mathbb H}^{n+1}$, satisfying $-1 = \lambda(\cosh(s)N+\sinh(s)(\e,0))\cdot (\e,1) = -\lambda \E^{s}$.  It follows that $\lambda = \E^{-s}$.  Thus we denote by $H_{\e}(s)$ the horosphere
$\{X\in{\mathbb H}^{n+1}:\ X\cdot (\e,1) = -\E^s\}$.  The interior region (called a \emph{horo-ball}) is denoted by
$$
B_{\e}(s) = \{X\in{\mathbb H}^{n+1}:\ 0>X\cdot(\e,1) >-\E^s\}.
$$


A region $\Omega$ in ${\mathbb H}^{n+1}$ is \emph{horospherically convex} (or h-convex for convenience) if every boundary point $p$ of $\partial\Omega$ has a supporting horo-ball, i.e. a horo-ball $B$ such that  $\Omega\subset B$ and $p\in\partial B$.  If the boundary of $\Omega$ is a smooth hypersurface, then this implies that every principal curvature of $\partial\Omega$ is greater than or equal to $1$ at $p$.  We say that $\Omega$ is \emph{uniformly h-convex} if there is $\delta>0$ such that all principal curvatures exceed $1+\delta$.

Let $M^n=\partial\Omega$ be a hypersurface which is at the boundary of a horospherically convex region $\Omega$.   Then the horospherical Gauss map ${\mathbf e}:\ M\to S^n$ assigns to each $p\in M$ the point $\e(p) =\pi(X(p)-\nu(p))\in S^n$, where $\pi(x,y) = \frac{x}{y}$ is the radial projection from the future null cone onto the sphere $S^n\times\{1\}$.  We observe that the derivative of ${\mathbf e}$ is non-singular if $M$ is uniformly h-convex:  If $v$ is a tangent vector to $M$, then
$$
D\e(v) = D\pi\big|_{X-\nu}\left(({\mathcal W}-{\textrm I})(v)\right).
$$
Here $\tilde v=({\mathcal W}-{\textrm I})(v)$ is a non-zero tangent vector to $M$ since the eigenvalues $\kappa_i$ of ${\mathcal W}$ are greater than $1$.  In particular $\tilde v$ is spacelike.  On the other hand the kernel of $D\pi|_{X-\nu}$ is the line $\RR(X-\nu)$ consisting of null vectors.  Therefore $D\pi(\tilde v)\neq 0$.  Thus $D\e$ is an injective linear map, hence an isomorphism.  It follows that $\e$ is a diffeomorphism from $M$ to $S^n$.

\subsection{The horospherical support function}

Let $M^n=\partial\Omega$ be the boundary of a compact h-convex region.  Then for each $\e\in S^n$ we define the \emph{horospherical support function} of $\Omega$ (or $M$) in direction $\e$ by
$$
u(\e):= \inf\{s\in\RR:\ \Omega\subset B_{\e}(s)\}.
$$


Alternatively, define $f_\e:\ {\mathbb H}^{n+1}\to \RR$ by $f_\e(\xi) = \log\left(-\xi\cdot(\e,1)\right)$.  This is a smooth function on ${\mathbb H}^{n+1}$, and we have the alternative characterisation
\begin{equation}\label{eq:defu}
u(\e) = \sup\{f_\e(\xi):\ \xi\in\Omega\}.
\end{equation}
The function $u$ is called the \emph{horospherical support function} of the region $\Omega$, and $B_\e(u(\e))$ is the \emph{supporting horo-ball} in direction $\e$.  The support function completely determines a horospherically convex region $\Omega$, as an intersection of horo-balls:
\begin{equation}\label{eq:utoOmega}
\Omega = \bigcap_{\e\in S^n}B_\e(u(\e)).
\end{equation}

\subsection{Recovering the region from the support function}

If the region is uniformly h-convex, in the sense that all principal curvatures are greater than $1$, then there is a unique point of $M$ in the boundary of the supporting horo-ball $B_\e(u(\e))$.  We denote this point by $\bar X(\e)$.  We observe that $\bar X = X\circ \e^{-1}$, so if $M$ is smooth and uniformly h-convex (so that $\e$ is a diffeomorphism) then $\bar X$ is a smooth embedding.

We will show that $\bar X$ can be written in terms of the support function $u$, as follows:   Choose local coordinates $\{x^i\}$ for $S^n$ near $\e$.  We write $\bar X(\e)$ as a linear combination of the basis consisting of the two null elements $(\e,1)$ and $(-\e,1)$, together with $(\e_j,0)$, where $\e_j = \frac{\partial\e}{\partial x^j}$ for $j=1,\cdots,n$:
$$
\bar X(\e) = \alpha (-\e,1) + \beta (\e,1) + \gamma^j(\e_j,0)
$$
for some coefficients, $\alpha$, $\beta$, $\gamma^j$.
Since $\bar X(\e)\in{\mathbb H}^{n+1}$ we have $|\gamma|^2-4\alpha\beta=-1$, so that $\beta = \frac{1+|\gamma|^2}{4\alpha}$.  We also know that $\bar X(\e)\cdot (\e,1) = -\E^{u(\e)}$ since $\bar X(\e)\in H_\e(u(\e))$, implying that $\alpha = \frac12\E^u$.  This gives
$$
\bar X(\e) = \frac12\E^{u(\e)}(-\e,1)+\frac12\E^{-u(\e)}(1+|\gamma|^2)(\e,1)+\gamma^j(\e_j,0).
$$

Furthermore,  the normal to $M$ at the point $\bar X(\e)$ must coincide with the normal to the horosphere $H_\e(u(\e))$, which is given by
\begin{equation}\label{eq:nue2}
\nu = \bar X-\bar e = \bar X-\E^{-u(\e)}(\e,1).
\end{equation}
Since $|\bar X|^2=-1$ we have $\partial_j\bar X\cdot\bar X=0$, and hence
\begin{align*}
0 &= \partial_j\bar X\cdot \nu\\
&= \partial_j\bar X\cdot\left(\bar X-\E^{-u}(\e,1)\right)\\
&=-\E^{-u}\partial_jX\cdot (\e,1).
\end{align*}
Observing that $(\e,1)\cdot(\e,1)=0$ and $(\e_i,0)\cdot(\e,1)=0$, and that $\partial_j\e_i = -\bar g_{ij}\e$ and $\partial_j\e=\e_j$, the condition becomes
\begin{align*}
0 &= \partial_jX\cdot(\e,1)\\
&= \left(\frac12\E^uu_j(-\e,1)-\gamma_j(\e,0)\right)\cdot(\e,1)\\
&=-\E^uu_j-\gamma_j,
\end{align*}
where $\gamma_j = \gamma^i\bar g_{ij}$ and $\bar g$ is the standard metric on $S^n$.
It follows that we must have $\gamma_j = -\E^uu_j$.  This gives the following expression for $\bar X$:
\begin{align}\label{eq:barX2}
\bar X(\e) &= \left(-\E^u\bar\nabla u+\left(\frac12\E^u|\bar\nabla u|^2-\sinh u\right)\e,\frac12\E^u|\bar\nabla u|^2+\cosh u\right)\\
&=-\E^uu_p\bar g^{pg}(\e_q,0)+\frac12\left(\E^u|\bar\nabla u|^2+\E^{-u}\right)(\e,1)+\frac12\E^u(-\e,1).\label{eq:barX3}
\end{align}

\subsection{A condition for horospherical convexity}\label{sec:5-4}

Given a smooth function $u$, we can use the expression \eqref{eq:barX2} to define a map to hyperbolic space.  In this section we determine when the resulting map is an embedding defining a horospherically convex hypersurface.

If order for $\bar X$ to be an immersion, we require the derivatives $\partial_j\bar X$ to be linearly independent.  Since we have constructed $\bar X$ in such a way that $\partial_jX$ is orthogonal to the normal vector $\nu$ to the horosphere $B_{\e}(u(\e))$,  $\partial_j\bar X$ is a linear combination of the basis for the space orthogonal to $\nu$ and $\bar X$ given by the projections $E_k$ of $(e_k,0)$, $k=1,\cdots,n$.  Computing explicity, we find
\begin{equation}\label{eq:Ek}
E_k = (\e_k,0)-u_k(\e,1).
\end{equation}
The immersion condition is therefore equivalent to invertibility of the matrix $A$ define by
$$
A_{jk} = -\partial_j\bar X\cdot E_k.
$$
Given that $A$ is non-singular, we have that $\bar X$ is an immersion with unit normal vector $\nu(\e)$, and we can differentiate the equation $X-\nu = \E^{-u}(\e,1)$ to obtain the following:
$$
-(h_j^p-\delta_j^p)\partial_pX = -u_j\E^{-u}(\e,1)+\E^{-u}(\e_j,0).
$$
Taking the inner product with $E_k$ using \eqref{eq:Ek}, we obtain
\begin{equation}\label{eq:Avs2ff}
(h_j^p-\delta_j^p)A_{pk} = \E^{-u}\bar g_{jk}.
\end{equation}
It follows that $A$ is non-singular precisely when ${\mathcal W}-\mathrm{I}$ is non-singular, and is given by
\begin{equation}\label{eq:A-vs-W}
A_{jk} = \E^{-u}\left[\left({\mathcal W}-\mathrm{I}\right)^{-1}\right]_j^p\bar g_{pk}.
\end{equation}
In particular, $A$ is symmetric, and ${\mathcal W}-\mathrm{I}$ is positive definite (corresponding to uniform h-convexity) if and only if the matrix $A$ is positive definite.  We conclude that if $u$ is a smooth function on $S^n$, then the map $X$ defines an embedding to the boundary of a uniformly h-convex region if and only if the tensor $A$ computed from $u$ is positive definite.

Computing $A$ explicitly using \eqref{eq:barX3}, we obtain
\begin{align*}
A_{jk} &= \left((\bar\nabla_j(\E^u\bar\nabla u),0)-\E^uu_j(\e,0) -\frac12\partial_j(\E^u|\bar\nabla u|^2+\E^{-u})(\e,1)\right.\\
&\quad\quad\null\left.-\frac12\E^uu_j(-\e,1)-\left(\frac12\E^u|\nabla u|^2-\sinh u\right)(\e_j,0)\right)\cdot((\e_k,0)-u_k(\e,1))\\
&=\bar\nabla_j(\E^u\bar\nabla_ku)-\frac12\E^u|\bar\nabla u|^2\bar g_{jk}+\sinh u\bar g_{jk}.
\end{align*}
It is convenient to write this in terms of the function $\varphi=\E^u$:
\begin{equation}\label{eq:Ainphi}
A_{jk} = \bar\nabla_j\bar\nabla_k\varphi-\frac{|\bar\nabla\varphi|^2}{2\varphi}\bar g_{jk}+\frac{\varphi-\varphi^{-1}}{2}\bar g_{jk}.
\end{equation}

\subsection{Monotonicity of the modified Quermassintegrals}

We will prove that the modified quermassintegrals $\tilde W_k$ are monotone with respect to inclusion by making use of the following result:

\begin{prop}\label{prop:connect-h-convex}
Suppose that $\Omega_1\subset\Omega_2$ are smooth, strictly h-convex domains in ${\mathbb H}^{n+1}$.  Then there exists a smooth map $X:\ S^n\times[0,1]\to{\mathbb H}^{n+1}$ such that
\begin{enumerate}
\item $X(.,t)$ is a uniformly h-convex embedding of $S^n$ for each $t$;
\item $X(S^n,0)=\partial\Omega_0$ and $X(S^n,1)=\partial\Omega_1$;
\item The hypersurfaces $M_t=X(S^n,t)$ are expanding, in the sense that $\frac{\partial X}{\partial t}\cdot \nu\geq 0$. Equivalently, the enclosed regions $\Omega_t$ are nested:  $\Omega_s\subset\Omega_t$ for each $s\leq t$ in $[0,1]$.
\end{enumerate}
\end{prop}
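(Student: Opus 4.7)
My plan is to parametrize the desired family via the horospherical support functions developed in \S\ref{sec:5-1}. Let $u_i$ be the horospherical support function of $\Omega_i$ for $i=1,2$ and set $\varphi_i=\E^{u_i}$. The characterization \eqref{eq:defu} immediately yields $u_1\le u_2$ on $S^n$ from $\Omega_1\subset\Omega_2$, and hence $\varphi_1\le\varphi_2$. I would then interpolate linearly in $\varphi$: set $\varphi_t:=(1-t)\varphi_1+t\varphi_2$, $u_t:=\log\varphi_t$ for $t\in[0,1]$, and define $X(\cdot,t)\colon S^n\to\mathbb{H}^{n+1}$ via the explicit formula \eqref{eq:barX2} applied to $u_t$. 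The map $X$ is jointly smooth since $\varphi_t>0$, and the bijective correspondence of \S\ref{sec:5-1} between uniformly h-convex regions and their horospherical support functions ensures that $X(\cdot,0)$ and $X(\cdot,1)$ parametrize $\partial\Omega_1$ and $\partial\Omega_2$, respectively.

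The main task is to show that each $M_t:=X(S^n,t)$ is uniformly h-convex. By \S\ref{sec:5-4}, this is equivalent to positive definiteness of the tensor $A(u_t)$ from \eqref{eq:Ainphi}. I propose to prove the sharper matrix-concavity statement
\begin{equation*}
A(u_t)\ \geq\ (1-t)A(u_1)+tA(u_2)
\end{equation*}
in the Loewner order, pointwise on $S^n$, from which strict positive definiteness follows immediately since both endpoints are strictly positive definite by hypothesis. To establish the concavity, decompose $A_{jk}(u_t)=\bar\nabla_j\bar\nabla_k\varphi_t+\Lambda(t)\bar g_{jk}$, where
\begin{equation*}
\Lambda(t)\ :=\ -\frac{|\bar\nabla\varphi_t|^2}{2\varphi_t}+\frac{\varphi_t-\varphi_t^{-1}}{2}.
\end{equation*}
Since $\varphi_t$ and $\bar\nabla\varphi_t$ are each linear in $t$: (i) $\bar\nabla^2\varphi_t$ is linear in $t$; (ii) the map $(x,y)\mapsto|x|^2/y$ on $\RR^n\times\RR_+$ is jointly convex (its Hessian is a straightforward exercise to see is positive semidefinite), so $-|\bar\nabla\varphi_t|^2/(2\varphi_t)$ is concave in $t$; (iii) $1/\varphi_t$ is convex in $\varphi_t>0$ and hence in $t$, so $-\varphi_t^{-1}/2$ is concave in $t$. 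Adding (ii) and (iii) shows that $\Lambda$ is concave, and combining with (i) yields the matrix-concavity of $A(u_t)$. This concavity step is the main obstacle; the rest of the argument is routine.

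For the expansion property, I would differentiate the horosphere identity $\bar X\cdot(\e,1)=-\E^{u_t}$ in $t$ and use $\partial_t\bar X\cdot\bar X=0$ together with $\nu=\bar X-\E^{-u_t}(\e,1)$ from \eqref{eq:nue2} to obtain
\begin{equation*}
\partial_tX\cdot\nu\ =\ \partial_tu_t\ =\ (\varphi_2-\varphi_1)/\varphi_t\ \geq\ 0,
\end{equation*}
which gives both $\partial_tX\cdot\nu\geq 0$ and the nested inclusion $\Omega_s\subset\Omega_t$ for all $s\le t$, completing the verification of properties (1)--(3).
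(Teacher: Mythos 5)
Your construction is exactly the paper's: interpolate linearly in $\varphi=\E^u$, and show that the h-convexity tensor $A$ from \eqref{eq:Ainphi} stays positive definite along the interpolation because $t\mapsto A[u_t]$ is concave in the Loewner order. Where you differ is in how you verify that concavity. The paper computes the defect $A[u_t]-(1-t)A[u_0]-tA[u_1]$ explicitly and shows it equals
$t(1-t)\,\frac{|\varphi_0\bar\nabla\varphi_1-\varphi_1\bar\nabla\varphi_0|^2+|\varphi_1-\varphi_0|^2}{2\varphi_0\varphi_1\varphi_t}\,\bar g_{jk}$,
a manifestly nonnegative multiple of $\bar g$. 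You instead decompose $A_{jk}=\bar\nabla_j\bar\nabla_k\varphi_t+\Lambda(t)\bar g_{jk}$, observe that the Hessian term is affine in $t$, and deduce concavity of $\Lambda$ from the joint convexity of $(x,y)\mapsto|x|^2/y$ on $\RR^n\times\RR_+$ and the convexity of $y\mapsto 1/y$ on $\RR_+$. Your route is cleaner and avoids the algebra, at the cost of not exhibiting the exact (positive) deficit term; both are correct. For the expansion property, you compute $\partial_t X\cdot\nu=\partial_t u_t=(\varphi_2-\varphi_1)/\varphi_t\geq 0$ directly, essentially the calculation the paper records later as \eqref{s5:u-2}, while the paper's proof of the proposition just notes that $u_t$ is increasing in $t$ and invokes the intersection formula \eqref{eq:utoOmega}. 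Again both are valid; your version gives the pointwise inequality $\partial_t X\cdot\nu\geq 0$ immediately rather than deducing it from nesting.
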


\begin{proof}
Let $u_0$ and $u_1$ be the horospherical support functions of $\Omega_0$ and $\Omega_1$ respectively,  The inclusion $\Omega_0\subset\Omega_1$ implies that $u_0(\e)\leq u_1(\e)$ for all $\e\in S^n$, by the characterisation \eqref{eq:defu}.

We define $X(\e,t) = \bar X[u(\e,t)]$ according to the formula \eqref{eq:barX2}, where
$$
\E^{u(\e,t)}=\varphi(\e,t) := (1-t)\varphi_0(\e)+t\varphi_1(\e),
$$
where $\varphi_i = \E^{u_i}$ for $i=0,1$.  Then $u(\e,t)$ is increasing in $t$, and it follows that the regions $\Omega_t$ are nested, by the expression \eqref{eq:utoOmega}.

We check that each $\Omega_t$ is a strictly h-convex region, by showing that the matrix $A$ constructed from $u(.,t)$ is positive definite for each $t$:  We have
\begin{align*}
A_{jk}[u(.,t)] &= \bar\nabla_j\bar\nabla_k\varphi_t -\frac{|\bar\nabla\varphi_t|^2}{2\varphi_t}\bar g_{jk}+\frac{\varphi_t-\varphi_t^{-1}}{2}\bar g_{jk}\\
&= (1-t)A_{jk}[u_0]+tA_{jk}[u_1]\\
&\quad\null +\frac12\left(-\frac{|(1-t)\bar\nabla\varphi_0+t\bar\nabla\varphi_1|^2}{(1-t)\varphi_0+t\varphi_1}+(1-t)\frac{|\bar\nabla\varphi_0|^2}{\varphi_0}+t\frac{|\bar\nabla\varphi_1|^2}{\varphi_1}\right)\bar g_{jk}\\
&\quad\null +\frac12\left(-\frac{1}{(1-t)\varphi_0+t\varphi_1}+\frac{1-t}{\varphi_0}+\frac{t}{\varphi_1}\right)\bar g_{jk}\\
&=(1-t)A_{jk}[u_0]+tA_{jk}[u_1]+t(1-t)\frac{|\varphi_0\bar\nabla\varphi_1-\varphi_1\bar\nabla\varphi_0|^2+|\varphi_1-\varphi_0|^2}{2\varphi_0\varphi_1((1-t)\varphi_0+t\varphi_1)}\bar g_{jk}\\
&\geq (1-t)A_{jk}[u_0]+tA_{jk}[u_1].
\end{align*}
Since $A_{jk}[u_0]$ and $A_{jk}[u_1]$ are positive definite, so is $A_{jk}[u_t]$ for each $t\in[0,1]$, and we conclude that the region $\Omega_t$ is uniformly h-convex.
\end{proof}

\begin{cor}\label{s5:cor}
The modified quermassintegral ${\widetilde W}_k$ is monotone with respect to inclusion for h-convex domains:  That is, if $\Omega_0$ and $\Omega_1$ are h-convex domains with $\Omega_0\subset\Omega_1$, then $\widetilde W_k(\Omega_0)\leq\widetilde W_k(\Omega_1)$.
\end{cor}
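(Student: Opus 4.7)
The plan is to combine Proposition \ref{prop:connect-h-convex} with the evolution formula of Lemma \ref{s2:lem3} to deduce monotonicity from an explicit sign computation. First I would reduce to the strictly h-convex case: if $\Omega_0\subset\Omega_1$ are only h-convex, one can approximate them from inside by strictly h-convex domains $\Omega_0^\varepsilon\subset\Omega_0$ and from outside by strictly h-convex domains $\Omega_1\subset\Omega_1^\varepsilon$ (for instance by parallel translation of the boundary, or by a short-time smoothing flow), arranged so that $\Omega_0^\varepsilon\subset\Omega_1^\varepsilon$. Passing to the limit $\varepsilon\to 0$ using continuity of $\widetilde W_k$ under smooth convergence reduces the problem to the strictly h-convex case.

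Next I would apply Proposition \ref{prop:connect-h-convex} to the strictly h-convex pair $\Omega_0\subset\Omega_1$ to obtain a smooth family of embeddings $X:S^n\times[0,1]\to\mathbb H^{n+1}$ such that each $X(\cdot,t)$ parametrises the boundary of a strictly h-convex region $\Omega_t$, with $\Omega_s\subset\Omega_t$ for $s\leq t$, and with outward normal speed $\varphi(\cdot,t):=\langle\partial_tX,\nu\rangle\geq 0$. By Lemma \ref{s2:lem3}, along such a flow one has
\begin{equation*}
\frac{d}{dt}\widetilde W_k(\Omega_t)=\int_{M_t}E_k(\lambda)\,\varphi\,d\mu_t,
\end{equation*}
where $\lambda_i=\kappa_i-1$ are the shifted principal curvatures of $M_t$.

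Since $\Omega_t$ is h-convex we have $\kappa_i\geq 1$, hence $\lambda_i\geq 0$, which gives $E_k(\lambda)\geq 0$ for every $k=0,1,\dots,n$. Combined with $\varphi\geq 0$, this forces the integrand above to be non-negative, so $t\mapsto\widetilde W_k(\Omega_t)$ is non-decreasing on $[0,1]$. Integrating from $0$ to $1$ yields $\widetilde W_k(\Omega_0)\leq\widetilde W_k(\Omega_1)$, and the approximation argument of the first paragraph completes the proof in the general h-convex case.

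The only genuine difficulty lies in the approximation step: one needs strictly h-convex inner and outer approximations that preserve the inclusion $\Omega_0^\varepsilon\subset\Omega_1^\varepsilon$. A clean way to achieve this is to note that for any smooth h-convex region $\Omega$, the inward parallel hypersurface at distance $\varepsilon$ is strictly h-convex (its principal curvatures strictly exceed $1$), while the outward parallel hypersurface at distance $\varepsilon$ is also strictly h-convex; choosing $\varepsilon$ small enough keeps the nesting intact. The main body of the argument, however, is simply the sign of $E_k(\lambda)\varphi$ in the evolution formula, and this is where Proposition \ref{prop:connect-h-convex} does all the real work.
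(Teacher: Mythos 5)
Your main argument is exactly the paper's own proof: invoke Proposition \ref{prop:connect-h-convex} to produce an expanding family $\Omega_t$ of strictly h-convex domains interpolating from $\Omega_0$ to $\Omega_1$, apply Lemma \ref{s2:lem3} to obtain $\frac{d}{dt}\widetilde W_k(\Omega_t)=\int_{M_t}E_k(\lambda)\langle\partial_t X,\nu\rangle\,d\mu_t$, and conclude from the non-negativity of $E_k(\lambda)$ (h-convexity gives $\lambda_i\geq 0$) together with $\langle\partial_t X,\nu\rangle\geq 0$. The paper gives precisely this chain of reasoning.

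The approximation step you add is reasonable in spirit — Proposition \ref{prop:connect-h-convex} is stated for \emph{strictly} h-convex domains while the corollary is stated for h-convex ones, a point the paper itself glosses over — but the mechanism you propose does not work. It is false that the inward or outward parallel hypersurface at distance $\varepsilon$ of a merely h-convex hypersurface is strictly h-convex. The shape operator of a family of parallel hypersurfaces in $\mathbb H^{n+1}$ satisfies the Riccati equation $S'=\mathrm{I}-S^2$ (outward) or $S'=S^2-\mathrm{I}$ (inward), so $\kappa_i=1$ is a fixed point of the eigenvalue flow: if a principal curvature equals $1$ somewhere on $\partial\Omega$, it remains exactly $1$ at the corresponding point of every parallel hypersurface (consider a horosphere, whose parallels are again horospheres). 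A working replacement is to use the horospherical support function introduced in \S\ref{sec:h-convex}: from \eqref{eq:Ainphi}, replacing $\varphi=\E^u$ by $\lambda\varphi$ with $\lambda>1$ gives
\begin{equation*}
A_{jk}[\lambda\varphi]=\lambda A_{jk}[\varphi]+\frac{(\lambda-\lambda^{-1})\varphi^{-1}}{2}\bar g_{jk},
\end{equation*}
which is positive definite whenever $A_{jk}[\varphi]\geq 0$; the resulting domain has support function $u+\log\lambda$, hence contains $\Omega$, and the pointwise ordering $u_0\leq u_1$ is preserved under adding $\log\lambda$, so nesting survives. Letting $\lambda\to 1^+$ then completes the reduction to the strictly h-convex case. (In all applications in the paper only strictly h-convex domains arise, so this gap has no downstream consequences.)
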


\begin{proof}
We use the map $X$ constructed in the Proposition \ref{prop:connect-h-convex}.  By Lemma \ref{s2:lem3} we have
$$
\frac{d}{dt}\widetilde W_k(\Omega_t) = \int_{M_t}E_k(\lambda)\frac{\partial X}{\partial t}\cdot \nu\,d\mu_t.
$$
Since each $M_t$ is h-convex, we have $\lambda_i>0$ and hence $E_k(\lambda)>0$, and from Proposition \ref{prop:connect-h-convex} we have $\frac{\partial X}{\partial t}\cdot \nu\geq 0$.  It follows that $\frac{d}{dt}\widetilde W_k(\Omega_t)\geq 0$ for each $t$, and hence $\widetilde W_k(\Omega_0)\leq \widetilde W_k(\Omega_1)$ as claimed.
\end{proof}

\subsection{Evolution of the horospherical support function}
We end this section with the following observation that the flow \eqref{flow-VMCF-2} of h-convex hypersurfaces is equivalent to an initial value problem for the horospherical support function.
\begin{prop}
The flow \eqref{flow-VMCF-2} of h-convex hypersurfaces in $\mathbb{H}^{n+1}$ is equivalent to the following initial value problem
\begin{equation}\label{s5:flow-u}
 \left\{\begin{aligned}
 \frac{\partial}{\partial t}\varphi=&~-F((A_{ij})^{-1})+\varphi\phi(t),\\
 \varphi(\cdot,0)=&~\varphi_0(\cdot)
  \end{aligned}\right.
 \end{equation}
on $S^n\times [0,T)$, where $\varphi=e^u$ and $A_{ij}$ is the matrix defined in \eqref{eq:Ainphi}.
\end{prop}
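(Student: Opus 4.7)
\emph{Proof plan.} The key identity is
\begin{equation*}
\varphi(\e,t) \;=\; -\bar X(\e,t)\cdot(\e,1),
\end{equation*}
which follows from the alternative description \eqref{eq:defu} of the horospherical support function together with the fact that on a strictly h-convex hypersurface the supremum is attained at the unique point $\bar X(\e,t)\in M_t$ whose image under the horospherical Gauss map is $\e$ (see \S\ref{sec:5-1}). The plan is to differentiate this identity in $t$ and show that the two flows arise from the same computation after applying the identification \eqref{eq:A-vs-W} and the degree-one homogeneity of $F$.

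Suppose first that $X$ solves \eqref{flow-VMCF-2}. Strict h-convexity of each $M_t$ makes the horospherical Gauss map a diffeomorphism, so we may write $\bar X(\e,t)=X(x(\e,t),t)$ for a smooth reparametrization $x(\e,t)\in M$, and differentiating gives
\begin{equation*}
\frac{\partial\bar X}{\partial t} \;=\; DX\cdot\frac{\partial x}{\partial t}\;+\;\bigl(\phi(t)-F(\mathcal W-\mathrm I)\bigr)\nu,
\end{equation*}
where the first term is tangent to $M_t$. From \S\ref{sec:5-1} we have $(\e,1)=\varphi(X-\nu)$, and any tangent vector $v\in T_{\bar X}M_t$ satisfies $v\cdot X=0$ (tangency to $\mathbb H^{n+1}$) and $v\cdot\nu=0$ (tangency to $M_t$), hence $v\cdot(\e,1)=0$. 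Using also $\nu\cdot X=0$ and $\nu\cdot\nu=1$, one obtains
\begin{equation*}
\frac{\partial\varphi}{\partial t} \;=\; -\frac{\partial\bar X}{\partial t}\cdot(\e,1) \;=\; \varphi\bigl(\phi(t)-F(\mathcal W-\mathrm I)\bigr).
\end{equation*}
Equation \eqref{eq:A-vs-W} shows that $A=\varphi^{-1}(\mathcal W-\mathrm I)^{-1}$ as endomorphisms of $T_\e S^n$, so $A^{-1}=\varphi(\mathcal W-\mathrm I)$; since $F$ is homogeneous of degree one this gives $F((A_{ij})^{-1})=\varphi F(\mathcal W-\mathrm I)$, and substituting yields exactly \eqref{s5:flow-u}.

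For the converse, given $\varphi$ solving \eqref{s5:flow-u} with the tensor $A_{ij}$ of \eqref{eq:Ainphi} positive definite, define $\bar X(\e,t)$ by the formula \eqref{eq:barX2} with $u=\log\varphi$; by \S\ref{sec:5-4} each $\bar X(\cdot,t)$ is an embedding of $S^n$ onto the boundary of a uniformly h-convex region whose support function is $u$ and whose shifted Weingarten operator satisfies \eqref{eq:A-vs-W}. The constraint $\bar X\cdot\bar X=-1$ forces $\partial_t\bar X\cdot\bar X=0$, so pairing $\partial_t\bar X$ with $(\e,1)=\varphi(\bar X-\nu)$ again leaves only the $\nu$-component and yields
\begin{equation*}
\frac{\partial\varphi}{\partial t} \;=\; \varphi\,\frac{\partial\bar X}{\partial t}\cdot\nu.
\end{equation*}
Combining this with \eqref{s5:flow-u} and the identity $F((A_{ij})^{-1})=\varphi F(\mathcal W-\mathrm I)$ shows that the normal speed $\partial_t\bar X\cdot\nu$ equals $\phi(t)-F(\mathcal W-\mathrm I)$, so $\bar X$ solves \eqref{flow-VMCF-2} modulo a tangential reparametrization (which is immaterial for the geometric flow).

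The main obstacle is really just bookkeeping in the Minkowski inner product; the one conceptual observation is that the null vector $(\e,1)$ lies entirely in $\mathrm{span}(X,\nu)$ with no $M_t$-tangential component, so the tangential part of $\partial_t\bar X$ drops out and only the normal speed enters the equation for $\varphi$.
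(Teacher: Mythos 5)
Your proposal is correct and follows essentially the same route as the paper. The paper differentiates the relation $\bar X-\nu=\E^{-u}(z,1)$ in time and pairs with $\nu$ to extract $\partial_t u=\phi-F(\mathcal W-\mathrm I)$; you instead differentiate $\varphi=-\bar X\cdot(\e,1)$ directly and use the decomposition $(\e,1)=\varphi(\bar X-\nu)$, but these are the same Minkowski-inner-product bookkeeping, and both invoke the degree-one homogeneity of $F$ together with \eqref{eq:A-vs-W} to pass to $F((A_{ij})^{-1})$, and both handle the converse by observing that a tangential reparametrization does not affect the geometric flow.
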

\proof
Suppose that $X(\cdot,t):M\to \mathbb{H}^{n+1}, t\in [0,T)$ is a family of smooth, closed and strictly h-convex hypersurfaces satisfying the flow \eqref{flow-VMCF-2}. Then as explained in \S \ref{sec:5-1}, the horospherical Gauss map $\mathbf{e}$ is a diffeomorphism from $M_t=X(M,t)$ to $S^n$. We can reparametrize $M_t$ such that $\bar{X}=X\circ \mathbf{e}^{-1}$ is a family of smooth embeddings from $S^n$ to $\mathbb{H}^{n+1}$. Then
\begin{equation*}
  \frac{\partial}{\partial t}\bar{X}(z,t)= \frac{\partial}{\partial t}{X}(p,t)+ \frac{\partial X}{\partial p_i} \frac{\partial p^i}{\partial t},
\end{equation*}
where $z\in S^n$ and $p=\mathbf{e}^{-1}(z)\in M_t$. Since $\frac{\partial X}{\partial p_i}$ is tangent to $M_t$, we have
\begin{equation}\label{s5:u-1}
  \frac{\partial}{\partial t}\bar{X}(z,t)\cdot \nu(z,t)= \frac{\partial}{\partial t}{X}(p,t)\cdot\nu(z,t)=\phi(t)-F(\mathcal{W}-I).
\end{equation}
On the other hand, by \eqref{eq:nue2} we have
\begin{equation}\label{s5:u-1-1}
  \bar{X}(z,t)-\nu(z,t)=e^{-u(z,t)}(z,1),
\end{equation}
where $u(\cdot,t)$ is the horospherical support function of $M_t$ and $(z,1)\in \mathbb{R}^{n+1,1}$ is a null vector. Differentiating \eqref{s5:u-1-1} in time gives that
 \begin{equation*}
    \frac{\partial}{\partial t}\bar{X}(z,t)- \frac{\partial}{\partial t}\nu(z,t)=-e^{-u(z,t)} \frac{\partial u}{\partial t}(z,1).
 \end{equation*}
Then
\begin{align}\label{s5:u-2}
  \frac{\partial}{\partial t}\bar{X}(z,t)\cdot\nu(z,t) =& -e^{-u(z,t)} \frac{\partial u}{\partial t}(z,1)\cdot \nu(z,t) \nonumber\\
   =& -e^{-u(z,t)} \frac{\partial u}{\partial t}(z,1)\cdot (\bar{X}(z,t)-e^{-u(z,t)}(z,1)) \nonumber\\
   =&-e^{-u(z,t)} \frac{\partial u}{\partial t}(z,1)\cdot \frac 12e^{u(z,t)}(-z,1) \nonumber\\
   =& \frac{\partial u}{\partial t},
\end{align}
where we used \eqref{eq:nue2} and \eqref{eq:barX3}. Combining \eqref{s5:u-1} and \eqref{s5:u-2} implies that
\begin{equation}\label{s5:u-3}
  \frac{\partial u}{\partial t}=\phi(t)-F(\mathcal{W}-I).
\end{equation}
Therefore $\varphi=e^u$ satisfies
\begin{align}
  \frac{\partial \varphi}{\partial t}=&e^u\phi(t)-F(e^u(\mathcal{W}-I))\nonumber\\
  =&\varphi\phi(t)-F((A_{ij})^{-1})
\end{align}
with $A_{ij}$ defined as in \eqref{eq:Ainphi}.

Conversely, suppose that we have a smooth solution $\varphi(\cdot,t)$ of the initial value problem \eqref{s5:flow-u} with $A_{ij}$ positive definite. Then by the discussion in \S \ref{sec:5-4}, the map $\bar{X}$ given in \eqref{eq:barX3} using the function $u=\log \varphi$ defines a family of smooth h-convex hypersurfaces in $\mathbb{H}^{n+1}$. We claim that we can find a family of diffeomorphisms $\xi(\cdot,t): S^n\to S^n$ such that $X(z,t)=\bar{X}(\xi(z,t),t)$ solves the flow equation \eqref{flow-VMCF-2}. Since
\begin{align*}
  \frac{\partial}{\partial t}{X}(z,t)=& \frac{\partial}{\partial t}\bar{X}(\xi,t)+\partial_i \bar{X} \frac{\partial \xi^i}{\partial t}\\
  =&(\frac{\partial}{\partial t}\bar{X}(\xi,t)\cdot \nu(\xi,t))\nu(\xi,t)+(\frac{\partial}{\partial t}\bar{X}(\xi,t))^{\top}+\partial_i \bar{X} \frac{\partial \xi^i}{\partial t}\\
  =& (\phi(t)-F(\mathcal{W}-I))\nu(\xi,t)+(\frac{\partial}{\partial t}\bar{X}(\xi,t))^{\top}+\partial_i \bar{X} \frac{\partial \xi^i}{\partial t},
\end{align*}
where $(\cdot)^{\top}$ denotes the tangential part, it suffices to find a family of diffeomorphisms $\xi:S^n\to S^n$ such that
\begin{equation*}
  (\frac{\partial}{\partial t}\bar{X}(\xi,t))^{\top}+\partial_i \bar{X} \frac{\partial \xi^i}{\partial t}=0,
\end{equation*}
which is equivalent to
\begin{equation}\label{s5:u-4}
  (\frac{\partial}{\partial t}\bar{X}(\xi,t))^{\top}\cdot E_j-A_{ij}\frac{\partial \xi^i}{\partial t}=0.
\end{equation}
By assumption $A_{ij}$ is positive definite on $S^n\times [0,T)$, the standard theory of the ordinary differential equations implies that the system \eqref{s5:u-4} has a unique smooth solution for the initial condition $\xi(z,0)=z$. This completes the proof.
\endproof

\section{Proof of Theorem \ref{thm1-5}}\label{sec:thm5-pf}
In this section, we will give the proof of Theorem \ref{thm1-5}.

\subsection{Pinching estimate}\label{sec:Pinch}

Firstly, we prove the following pinching estimate for the shifted principal curvatures of the evolving hypersurfaces along the flow \eqref{flow-VMCF-2}.

%
\begin{prop}
Let $M_t$ be a smooth solution to the flow \eqref{flow-VMCF-2} on $[0,T)$ and assume that $F$ satisfies the assumption in Theorem \ref{thm1-5}. Then there exists a constant $C>0$ depending only on $M_0$ such that
\begin{equation}\label{s5:pinc-1}
  \lambda_n~\leq~C\lambda_1
\end{equation}
for all $t\in [0,T)$, where $\lambda_n=\kappa_n-1$ is the largest shifted principal curvature  and $\lambda_1=\kappa_1-1$ is the smallest shifted principal curvature.
\end{prop}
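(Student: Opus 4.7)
The plan is to apply the tensor maximum principle (Theorem \ref{s2:tensor-mp}) to the symmetric tensor
$$
T_{ij} = S_{ij} - \varepsilon F \delta_i^j,
$$
where $\varepsilon>0$ is chosen small enough that $T_{ij}>0$ on $M_0$; this is possible by strict h-convexity of $M_0$ and compactness. The preserved positivity $T\geq 0$ encodes $\lambda_1\geq \varepsilon F$ along the flow, from which the desired pinching $\lambda_n\leq C\lambda_1$ will be extracted in a separate step at the end.

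First, I would compute the evolution of $T_{ij}$ by subtracting $\varepsilon \delta_i^j$ times \eqref{s2:evl-F} from \eqref{s2:evl-S}, obtaining a parabolic equation of the form
$$
\frac{\partial T_{ij}}{\partial t} = \dot F^{kl}\nabla_k\nabla_l T_{ij} + \ddot F^{kl,pq}\nabla_i h_{kl}\nabla^j h_{pq} + N_{ij},
$$
where $N_{ij}$ collects the zero-order terms containing $\dot F^{kl}S_{kr}S_{rl}$, $\dot F^{kl}\delta_{kl}$, $F$, $S_{ij}$ and the global factor $\phi(t)$. Using Euler's identity $\dot F^{kl}S_{kl}=F$ I would check that, at a null direction $v$ of $T$ (which after diagonalising $h_i^j$ in an orthonormal frame may be taken to be the $\lambda_1$-eigenvector, so that $\lambda_1 = \varepsilon F$ there), the contribution $N_{ij}v^iv^j$ has the correct sign.

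The core of the argument is the verification of hypothesis \eqref{s2:TM2} at such a critical point. After expanding the second-derivative term via \eqref{s2:F-ddt} and completing the square over antisymmetric $\Lambda$ in the standard way (cf.\ \cite{And2007}), the required inequality becomes a bilinear form in $\nabla h$ that must be shown non-negative. The four cases of Theorem \ref{thm1-5} are handled differently: case (i) exploits concavity $\ddot f\leq 0$ so the $\ddot F$ piece already carries the good sign; case (iii) applies the inverse-concavity inequality \eqref{s2:f-invcon-1} from Lemma \ref{s2:lem1} to convert the $\ddot f$ term into $\dot f/\kappa$ terms that combine with the off-diagonal pieces $\frac{\dot f^k-\dot f^l}{\kappa_k-\kappa_l}$ to yield non-negativity; case (ii) uses both ingredients; and case (iv), $n=2$, reduces to a direct scalar pinching argument analogous to the one carried out in \S\ref{sec:4-3}. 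Throughout, the critical-point relation $\nabla_i T_{11}=0$ translates to $\varepsilon \nabla_i F = \nabla_i h_{11}$, which is used to absorb mixed gradient terms.

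Having established $\lambda_1\geq \varepsilon F$ on $[0,T)$, I would conclude the stated form $\lambda_n\leq C\lambda_1$ as follows. In cases (i) and (iii) the hypothesis that $f$ (respectively $f_*$) vanishes on $\partial\Gamma_+$ makes the set $\{\lambda\in\Gamma_+: f(\lambda)=1,\ \lambda_1\geq\varepsilon\}$ compact, so the degree-zero homogeneous quantity $\lambda_n/\lambda_1$ is uniformly bounded there; homogeneity transfers the bound to all scales. Case (ii) follows from the same compactness argument using concavity of both $f$ and $f_*$, and case (iv) is immediate from the two-variable structure. The step I expect to be hardest is the case-by-case bookkeeping of the gradient term in the tensor maximum principle: balancing the $\ddot f$ contribution against the $\Lambda$-supremum after substituting $\nabla T\cdot v=0$ is delicate, and case (ii) in particular seems to require concavity and inverse concavity to be used in conjunction, neither alone sufficing to absorb all the cross terms.
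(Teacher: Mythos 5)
Your plan works for case (iii) and matches the paper's treatment there, but the attempt to run the \emph{same} tensor $T_{ij}=S_{ij}-\varepsilon F\delta_i^j$ through all four cases introduces two genuine gaps.

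For case (i), concavity of $F$ gives the \emph{wrong} sign for the tensor maximum principle: the reaction term includes $\ddot F^{kl,pq}\nabla_1 h_{kl}\nabla_1 h_{pq}$, which is $\leq 0$ for concave $F$, whereas condition \eqref{s2:TM2} requires this (plus the $\Lambda$-supremum, which is automatically $\geq 0$) to be non-negative. Concavity thus hurts rather than helps in the tensor setting, and there is no lower bound available to offset it. The paper sidesteps this by instead applying the scalar maximum principle to $G=F^{-1}\mathrm{tr}(S)$: in the evolution equation for $G$, the Hessian term $F^{-1}\sum_i\ddot F^{kl,pq}\nabla_i h_{kl}\nabla^ih_{pq}$ appears with a sign that makes concavity favourable (it keeps $\sup G$ from increasing). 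So for case (i) you need a scalar quantity, not a tensor.

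For case (ii), two things go wrong. First, the paper uses the different tensor $T_{ij}=S_{ij}-\varepsilon\,\mathrm{tr}(S)\delta_i^j$; the subtracted term contributes $-\varepsilon\bigl(\sum_i\ddot F^{kl,pq}\nabla_i h_{kl}\nabla^ih_{pq}\bigr)\delta_i^j$ to the reaction, which is $\geq 0$ for concave $F$ and, together with inverse concavity via \cite{And2007}*{Theorem 4.1}, makes the gradient terms non-negative. Your tensor $S_{ij}-\varepsilon F\delta_i^j$ loses this helping term, since the $\ddot F$ contribution from $\nabla^2 F$ and from $\nabla^2 S$ are identical. Second, even granting $T\geq 0$, your final compactness step does not close: the preserved inequality $\lambda_1\geq\varepsilon f(\lambda)$ gives $\lambda_n\leq C\lambda_1$ only when $f$ or $f_*$ vanishes on $\partial\Gamma_+$, and this is \emph{not} assumed in case (ii). (For instance $f=E_1$ is concave and inverse concave but does not vanish on $\partial\Gamma_+$.) The paper's choice $S_{ij}-\varepsilon\,\mathrm{tr}(S)\delta_i^j$ avoids this because $T\geq 0$ directly yields $\lambda_1\geq\varepsilon\sum_k\lambda_k\geq\varepsilon\lambda_n$, with no compactness step needed. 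Case (iv) in the paper also does not use the tensor at all but a scalar quantity $G=\bigl((\lambda_2-\lambda_1)/(\lambda_2+\lambda_1)\bigr)^2$; your appeal to the \S\ref{sec:4-3} argument is loosely analogous but that argument was tailored to $f=(\kappa_1\kappa_2)^{1/2}$, so this would need its own verification.
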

\proof
We consider the four cases of $F$ separately.

(i). $F$ is concave and $F$ vanishes on the boundary of the positive cone $\Gamma_+$. Define a function $G=F^{-1}\mathrm{tr}(S)$ on $M\times [0,T)$. Then the equations \eqref{s2:evl-F} and \eqref{s2:evl-S} imply that
\begin{align}\label{s5:G1}
  \frac{\partial}{\partial t}G =& ~F^{-1}\frac{\partial}{\partial t}\mathrm{tr}(S)-F^{-2}\mathrm{tr}(S)\frac{\partial}{\partial t}F \displaybreak[0]\nonumber\\
  = & ~\dot{F}^{kl}\nabla_k\nabla_lG+2F^{-1}\dot{F}^{kl}\nabla_kF\nabla_lG+F^{-1}\sum_{i=1}^n\ddot{F}^{kl,pq}\nabla_ih_{kl}\nabla^ih_{pq}\displaybreak[0]\nonumber\\
  &\quad +\phi(t)f^{-2}\left(\mathrm{tr}(S)\sum_k\dot{f}^k\lambda_k^2-f|S|^2\right)+f^{-1}\left(n\sum_k\dot{f}^k\lambda_k^2-|S|^2\sum_k\dot{f}^k\right).
\end{align}
Since $F$ is concave, by the inequality \eqref{s2:f-conc} we have
\begin{align*}
  \mathrm{tr}(S)\sum_k\dot{f}^k\lambda_k^2-f|S|^2=& \sum_{k,l}\left(\dot{f}^k\lambda_k^2\lambda_l-\dot{f}^k\lambda_k\lambda_l^2\right) \\
  = &~\frac 12\sum_{k,l}(\dot{f}^k-\dot{f}^l)(\lambda_k-\lambda_l)\lambda_k\lambda_l~\leq~0,
\end{align*}
and
\begin{align*}
  n\sum_k\dot{f}^k\lambda_k^2-|S|^2\sum_k\dot{f}^k= & \sum_{k,l}\left(\dot{f}^k\lambda_k^2-\dot{f}^k\lambda_l^2\right) \\
  = &~\frac 12\sum_{k,l}(\dot{f}^k-\dot{f}^l)(\lambda_k^2-\lambda_l^2)~\leq~0.
\end{align*}
Thus the zero order terms of \eqref{s5:G1} are always non-positive. The concavity of $F$ also implies that the third term of \eqref{s5:G1} is non-positive. Then we have
\begin{align}\label{s5:G2}
  \frac{\partial}{\partial t}G \leq& ~\dot{F}^{kl}\nabla_k\nabla_lG+2F^{-1}\dot{F}^{kl}\nabla_kF\nabla_lG.
\end{align}
The maximum principle implies that the supremum of $G$ over $M_t$ is decreasing in time along the flow \eqref{flow-VMCF-2}. The assumption that $f$ approaches zero on the boundary of the positive cone $\Gamma_+$ then guarantees that the region $\{G(t)\leq \sup_{t=0}G\}\subset \Gamma_+$ does not touch the boundary of $\Gamma_+$. Since $G$ is homogeneous of degree zero with respect to $\lambda_i$, this implies that $\lambda_n\leq C\lambda_1$ for some constant $C>0$ depending only on $M_0$ for all $t\in [0,T)$.

(ii). $F$ is concave and inverse concave. Define a tensor $T_{ij}=S_{ij}-\varepsilon~ \mathrm{tr}(S)\delta_i^j$, where $\varepsilon$ is chosen such that $T_{ij}$ is positive definite initially. Clearly, $0<\varepsilon\leq \frac 1n$. The evolution equation \eqref{s2:evl-S} implies that
\begin{align}\label{s5:evl-T}
 \frac{\partial}{\partial t}T_{ij}  =&~ \dot{F}^{kl}\nabla_k\nabla_lT_{ij}+\ddot{F}^{kl,pq}\nabla_ih_{kl}\nabla_jh_{pq}-\varepsilon\left(\sum_{i=1}^n\ddot{F}^{kl,pq}\nabla_ih_{kl}\nabla^ih_{pq}\right)\delta_i^j\nonumber\\  &\quad +\left(\sum_{k=1}^n\dot{f}^k\lambda_k^2+2f-2\phi(t)\right)T_{ij} -\left(\phi(t)+\sum_{k=1}^n\dot{f}^k\right)\left(T_i^kT_{kj}+2\varepsilon \mathrm{tr}(S)T_{ij}\right)\nonumber\displaybreak[0]\\
   &\quad +\varepsilon \left(\phi(t)+\sum_{k=1}^n\dot{f}^k\right)\left(|S|^2-\varepsilon (\mathrm{tr}(S))^2\right)\delta_i^j+\sum_{k=1}^n\dot{f}^k\lambda_k^2(1-\varepsilon n)\delta_i^j.
\end{align}
We will apply the tensor maximum principle in Theorem \ref{s2:tensor-mp} to show that $T_{ij}$ is positive definite for $t>0$. If not, there exists a first time $t_0>0$ and some point $x_0\in M_{t_0}$ such that $T_{ij}$ has a null vector $v\in T_{x_0}M_{t_0}$, i.e., $T_{ij}v^j=0$ at $(x_0,t_0)$. The second line of \eqref{s5:evl-T} satisfies the null vector condition and can be ignored.  The last line of \eqref{s5:evl-T} is also nonnegative, since $0<\varepsilon<\frac 1n$ and $|S|^2\geq (\mathrm{tr}(S))^2/n$. For the gradient terms in \eqref{s5:evl-T}, Theorem 4.1 of \cite{And2007} implies that
\begin{align*}
  &\ddot{F}^{kl,pq}\nabla_ih_{kl}\nabla_jh_{pq}v^iv^j-\varepsilon\left(\sum_{i=1}^n\ddot{F}^{kl,pq}\nabla_ih_{kl}\nabla^ih_{pq}\right)|v|^2\displaybreak[0]\\ &+\sup_{\Lambda}2a^{kl}\left(2\Lambda_k^p\nabla_lT_{ip}v^i-\Lambda_k^p\Lambda_l^qT_{pq}\right)\geq 0
\end{align*}
for the null vector $v$ provided that $F$ is concave and inverse concave. Thus by Theorem \ref{s2:tensor-mp}, the tensor $T_{ij}$ is positive definite for $t\in [0,T)$. Equivalently,
\begin{equation*}
  \lambda_1~\geq~\varepsilon (\lambda_1+\cdots+\lambda_n)
\end{equation*}
for any $t\in [0,T)$, which implies the pinching estimate \eqref{s5:pinc-1}.

(iii). $F$ is inverse concave and $F_*$ approaches zero on the boundary of $\Gamma_+$.  In this case, we define $T_{ij}=S_{ij}-\varepsilon F\delta_i^j$, where $\varepsilon$ is chosen such that $T_{ij}$ is positive definite initially. By \eqref{s2:evl-F} and \eqref{s2:evl-S},
\begin{align}\label{s5:evl-T-i}
 \frac{\partial}{\partial t}T_{ij}  =&~ \dot{F}^{kl}\nabla_k\nabla_lT_{ij}+\ddot{F}^{kl,pq}\nabla_ih_{kl}\nabla^jh_{pq} +(\dot{f}^{k}\lambda_k^2+2f-2\phi(t))S_{ij}\nonumber\\
   &\quad -(\phi(t)+\sum_{k=1}^n\dot{f}^{k}) S_{ik}S_{kj}+\dot{f}^{k}\lambda_k^2\delta_i^j-\varepsilon (F-\phi(t))\dot{f}^{k}\lambda_k(\lambda_k+2)\delta_i^j.
\end{align}
Suppose $v=e_1$ is the null eigenvector of $T_{ij}$ at $(x_0,t_0)$ for some first time $t_0>0$. Denote the zero order terms of \eqref{s5:evl-T-i} by $Q_0$. At the point $(x_0,t_0)$, $\varepsilon F$ is the smallest eigenvalue of $S_{ij}$ with corresponding eigenvector $v$. Then
\begin{align*}
  Q_0v^iv^j =& (\dot{f}^k\lambda_k^2+2f-2\phi(t))\varepsilon f|v|^2 +(f-\phi(t)-\dot{f}^k\kappa_k)\varepsilon^2f^2|v|^2  \nonumber\\
   & +\dot{f}^k\lambda_k^2|v|^2-\varepsilon (f-\phi(t))\dot{f}^k\lambda_k(\lambda_k+2)|v|^2\displaybreak[0]\nonumber\\
   =&\dot{f}^k\lambda_k^2(1+\varepsilon \phi(t))|v|^2-\varepsilon^2f^2(\sum_k\dot{f}^k+\phi(t))|v|^2\displaybreak[0]\nonumber\\
   =&|v|^2\biggl(\dot{f}^k\lambda_k\varepsilon(\lambda_k-\varepsilon f)\phi(t)+\sum_k\dot{f}^k(\lambda_k^2-\varepsilon^2f^2)\biggr)\geq ~0.
\end{align*}
By Theorem \ref{s2:tensor-mp}, to show that $T_{ij}$ remains positive definite for $t>0$, it suffices to show that
\begin{align*}
  Q_1: =& \ddot{F}^{kl,pq}\nabla_1h_{kl}\nabla_1h_{pq} +2\sup_{\Lambda}\dot{F}^{kl}\left(2\Lambda_k^p\nabla_lT_{1p}-\Lambda_k^p\Lambda_l^qT_{pq}\right)~\geq~0.
\end{align*}
Note that $T_{11}=0$ and $\nabla_kT_{11}=0$ at $(x_0,t_0)$, the supremum over $\Lambda$ can be computed exactly as follows:
\begin{align*}
  2\dot{F}^{kl}\left(2\Lambda_k^p\nabla_lT_{1p}-\Lambda_k^p\Lambda_l^qT_{pq}\right)  =&2\sum_{k=1}^n\sum_{p=2}^n\dot{f}^k\left(2\Lambda_k^p\nabla_kT_{1p}-(\Lambda_k^p)^2T_{pp}\right)\\
  =&2 \sum_{k=1}^n\sum_{p=2}^n\dot{f}^k\left(\frac{(\nabla_kT_{1p})^2}{T_{pp}}-\left(\Lambda_k^p-\frac{\nabla_kT_{1p}}{T_{pp}}\right)^2T_{pp}\right).
\end{align*}
It follows that the supremum is obtained by choosing $\Lambda_k^p=\frac{\nabla_kT_{1p}}{T_{pp}}$. The required inequality for $Q_1$ becomes:
\begin{align*}
  Q_1=& \ddot{F}^{kl,pq}\nabla_1h_{kl}\nabla_1h_{pq} +2 \sum_{k=1}^n\sum_{p=2}^n\dot{f}^k\frac{(\nabla_kT_{1p})^2}{T_{pp}}~\geq~0.
\end{align*}
Using \eqref{s2:F-ddt} to express the second derivatives of $F$ and noting that $\nabla_kT_{1p}=\nabla_kh_{1p}-\varepsilon \nabla_kF\delta_1^p=\nabla_kh_{1p}$ at $(x_0,t_0)$ for $p\neq 1$, we have
\begin{align}\label{s5:Q1-0}
  Q_1=& \ddot{f}^{kl}\nabla_1h_{kk}\nabla_1h_{ll}+2\sum_{k>l}\frac{\dot{f}^k-\dot{f}^l}{\lambda_k-\lambda_l}(\nabla_1h_{kl})^2+2 \sum_{k=1}^n\sum_{l=2}^n\frac{\dot{f}^k}{\lambda_{l}-\varepsilon F}(\nabla_1h_{kl})^2.
\end{align}
Since $f$ is inverse concave, the inequality \eqref{s2:f-invcon-1} implies that the first term of \eqref{s5:Q1-0} satisfies
\begin{align*}
 \ddot{f}^{kl}\nabla_1h_{kk}\nabla_1h_{ll} \geq &  ~ 2f^{-1}(\sum_{k=1}^n\dot{f}^k\nabla_1h_{kk})^2-2\sum_k\frac{\dot{f}^k}{\lambda_k}(\nabla_1h_{kk})^2\displaybreak[0]\\
 =& ~ 2f^{-1}(\nabla_1F)^2-2\sum_k\frac{\dot{f}^k}{\lambda_k}(\nabla_1h_{kk})^2.
\end{align*}
Then
\begin{align*}
 Q_1\geq &~2f^{-1}(\nabla_1F)^2-2\sum_k\frac{\dot{f}^k}{\lambda_k}(\nabla_1h_{kk})^2\\
  &\quad +2\sum_{k>l}\frac{\dot{f}^k-\dot{f}^l}{\lambda_k-\lambda_l}(\nabla_1h_{kl})^2+2 \sum_{k=1}^n\sum_{l=2}^n\frac{\dot{f}^k}{\lambda_{l}-\varepsilon F}(\nabla_1h_{kl})^2\displaybreak[0]\\
  \geq &~2f^{-1}(\nabla_1F)^2-2\frac{\dot{f}^1}{\lambda_1}(\nabla_1h_{11})^2-2\sum_{k>1}\frac{\dot{f}^k}{\lambda_k}(\nabla_1h_{kk})^2\\
  &+2\sum_{k>1}\frac{\dot{f}^k-\dot{f}^1}{\lambda_k-\lambda_1}(\nabla_kh_{11})^2 -2\sum_{k\neq l>1}\frac{\dot{f}^k}{\lambda_l}(\nabla_1h_{kl})^2\displaybreak[0]\\
  &+2\sum_{k>1}\frac{\dot{f}^1}{\lambda_{k}-\varepsilon F}(\nabla_kh_{11})^2+2\sum_{k>1,l>1}\frac{\dot{f}^k}{\lambda_{l}-\varepsilon F}(\nabla_1h_{kl})^2\displaybreak[0]\\
= &~2f^{-1}(\nabla_1F)^2-2\frac{\dot{f}^1}{\lambda_1}(\nabla_1h_{11})^2+2\sum_{k>1}\frac{\dot{f}^k}{\lambda_k-\lambda_1}(\nabla_kh_{11})^2\displaybreak[0]\\
&\quad +2\sum_{k>1,l>1}\dot{f}^k\left(\frac{1}{\lambda_l-\varepsilon F}-\frac 1{\lambda_l}\right)(\nabla_1h_{kl})^2\displaybreak[0]\\
 \geq &~2\left(\frac 1{\varepsilon^2F}-\frac{\dot{f}^1}{\lambda_1}\right)(\nabla_1h_{11})^2\displaybreak[0]\\
 =& ~2\left(\frac {\sum_{k=1}^n\dot{f}^k\lambda_k}{\varepsilon^2F^2}-\frac{\dot{f}^1}{\lambda_1}\right)(\nabla_1h_{11})^2 ~\geq ~0,
  \end{align*}
where we used $\lambda_1=\varepsilon F$ and $\nabla_kh_{11}=\varepsilon \nabla_kF$ at $(x_0,t_0)$, and the inequality in \eqref{s2:f-invcon} due to the inverse concavity of $f$. Theorem \ref{s2:tensor-mp} implies that $T_{ij}$ remains positive definite for $t\in [0,T)$. Equivalently, there holds
\begin{equation}\label{s5:iii-1}
  \frac 1{\lambda_1}~\leq ~\frac 1{\varepsilon}f(\lambda)^{-1}=\frac 1{\varepsilon} f_*(\frac 1{\lambda_1},\cdots,\frac 1{\lambda_n})
\end{equation}
for all $t\in [0,T)$. Since $f_*$ approaches zero on the boundary of the positive cone $\Gamma_+$, the estimate \eqref{s5:iii-1} and Lemma 12 of \cite{Andrews-McCoy-Zheng} give the pinching estimate \eqref{s5:pinc-1}.

(iv). $n=2$. In this case, we don't need any second derivative condition on $F$. Define
\begin{equation*}
  G~=~\left(\frac{\lambda_2-\lambda_1}{\lambda_2+\lambda_1}\right)^2.
\end{equation*}
Then $G$ is homogeneous of degree zero of the shifted principal curvatures $\lambda_1,\lambda_2$. The evolution equation \eqref{s2:evl-S} implies that
 \begin{align}\label{s5:G3}
 \frac{\partial}{\partial t}G  =& \dot{F}^{kl}\nabla_k\nabla_lG+\left(\dot{G}^{ij}\ddot{F}^{kl,pq}-\dot{F}^{ij}\ddot{G}^{kl,pq}\right)\nabla_iS_{kl}\nabla^jS_{pq}\displaybreak[0]\nonumber\\ & -\left(\phi(t)+\sum_k\dot{f}^k\right)\dot{G}^{ij}S_{ik}S_{kj}+(\sum_k\dot{f}^k\lambda_k^2)\dot{G}^{ij}\delta_i^j.
 \end{align}
 The zero order terms of \eqref{s5:G3} are equal to
 \begin{equation*}
   Q_0=-4G\frac{\lambda_1\lambda_2}{\lambda_1+\lambda_2}\left(\phi(t)+\sum_k\dot{f}^k\right)-\frac{4G}{\lambda_1+\lambda_2}(\sum_k\dot{f}^k\lambda_k^2)~\leq~0.
 \end{equation*}
 The same argument as in \cite{And2010} gives that the gradient terms of \eqref{s5:G3} are non-positive at the critical point of $G$. Then the maximum principles implies that the supremum of $G$ over $M_t$ are non-increasing in time along the flow \eqref{flow-VMCF-2}. This gives the pinching estimate \eqref{s5:pinc-1} and the strict h-convexity of $M_t$ for all $t\in [0,T)$.
\endproof

\subsection{Shape estimate}
Denote by $\rho_-(t), \rho_+(t)$ the inner radius and outer radius of $\Omega_t$. Then there exists two points $p_1,p_2 \in \mathbb{H}^{n+1}$ such that $ B_{\rho_-(t)}(p_1)\subset \Omega_t\subset B_{\rho_+(t)}(p_2)$. By Corollary \ref{s5:cor}, the modified quermassintegral $\widetilde{W}_l$ is monotone under the inclusion of \emph{h-convex} domains in $\mathbb{H}^{n+1}$. This implies that
\begin{equation*}
\tilde{f}_l(\rho_-(t))=\widetilde{W}_l(B_{\rho_-(t)}(p_1))\leq \widetilde{W}_l(\Omega_t)\leq \widetilde{W}_l(B_{\rho_+(t)}(p_2))=\tilde{f}_l(\rho_+(t)).
\end{equation*}
Along the flow \eqref{flow-VMCF-2}, $\widetilde{W}_l(\Omega_t)=\widetilde{W}_l(\Omega_0)$ is a fixed constant. Therefore,  $$\rho_-(t)\leq C\leq \rho_+(t),$$where $C=\tilde{f}_l^{-1}(\widetilde{W}_l(\Omega_0))>0$ depends only on $l,n$ and $\Omega_0$.

On the other hand, since each $\Omega_t$ is \emph{h-convex}, the inner radius and outer radius of $\Omega_t$ satisfy $ \rho_+(t)~\leq ~c(\rho_-(t)+\rho_-(t)^{1/2})$ for some uniform positive constant $c$ (see  \cite{Cab-Miq2007,Mak2012}).  Thus there exist positive constants $c_1,c_2$ depending only on $n,l, M_0$ such that
\begin{equation}\label{s6:io-radius1}
  0<c_1\leq \rho_-(t)\leq \rho_+(t)\leq c_2
\end{equation}
for all time $t\in [0,T)$.

\subsection{$C^2$ estimate}
\begin{prop}\label{s6:F-ub}
Under the assumptions of Theorem \ref{thm1-5} with $\phi(t)$ given in \eqref{s1:phit-2},  we have $F\leq C$ for any $t\in [0,T)$, where $C$ depends on $n,l, M_0$ but not on $T$.
\end{prop}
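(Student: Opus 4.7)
We adapt the Tso-type argument used for Theorem~\ref{s4:F-bd} to the flow~\eqref{flow-VMCF-2}, with two new ingredients: identities arising from the shift $\mathcal{W}-\mathrm{I}$ when differentiating, and the pinching estimate~\eqref{s5:pinc-1} which provides the decisive lower bound on a key curvature invariant.

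First, the shape estimate~\eqref{s6:io-radius1} gives, for each $t_0\in[0,T)$, an inball $B_{\rho_0}(p_0)\subset\Omega_{t_0}$ with $\rho_0=\rho_-(t_0)\ge c_1$. By the same reasoning as Lemma~\ref{s4:lem-inball}, there exists $\tau>0$ depending only on $n$, $l$ and $M_0$ such that $B_{\rho_0/2}(p_0)\subset\Omega_t$ throughout $[t_0,\min\{T,t_0+\tau\})$. On this interval the hyperbolic support function $u(x,t)=\sinh r_{p_0}(x)\langle\partial r_{p_0},\nu\rangle$ of $M_t$ with respect to $p_0$ satisfies $u\ge \sinh(\rho_0/2)=:2c\ge\sinh(c_1/2)$, while the outer-radius bound gives $u\le\sinh(2c_2)$ and $\cosh r_{p_0}\le\cosh(2c_2)$. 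Set $W:=F/(u-c)$, which is well defined, positive, and comparable to $F$.

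Differentiating $W$ in $t$, using \eqref{s2:evl-F} and the analogue (for the flow \eqref{flow-VMCF-2}) of the evolution equation for $u$ derived in Section~\ref{sec:4-2}, and applying the shift identities
\[
\dot F^{kl}h_{kl}=F+\dot F^{kl}\delta_{kl},\qquad \dot F^{kl}h_k^r h_{rl}=\dot F^{kl}S_k^r S_{rl}+2F+\dot F^{kl}\delta_{kl}
\]
(which arise because $\dot F^{kl}$ denotes derivatives with respect to the \emph{shifted} Weingarten matrix), a direct computation gives
\[
\frac{\partial W}{\partial t}=\dot F^{kl}\nabla_k\nabla_l W+\frac{2}{u-c}\dot F^{kl}\nabla_k u\,\nabla_l W+\mathcal{Q}.
\]
All $\phi(t)$-terms in $\mathcal{Q}$ carry a favourable sign: $\phi(t)>0$ as a positive $E_l$-weighted mean of $F>0$, and a careful grouping of the coefficients shows that their total contribution is non-positive. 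The remaining part of $\mathcal{Q}$, at a spatial maximum of $W$ (where $\dot F^{kl}\nabla_k\nabla_l W\le 0$ and $\nabla W=0$), is dominated by
\[
\frac{W}{u-c}\Bigl[-c\,\dot F^{ij}S_i^k S_{kj}+C_1 F+C_2\Bigr],
\]
with $C_1,C_2$ depending only on $n,l,M_0$; here one uses that $\sum_i\dot f^i$ is a degree-zero homogeneous function of $\lambda$, hence uniformly bounded by the pinching estimate.

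The decisive ingredient is the lower bound
\[
\dot F^{ij}S_i^k S_{kj}=\sum_{i=1}^n\dot f^i\lambda_i^2\ \ge\ \lambda_1 F\ \ge\ C^{-1} F^2,
\]
which follows from strict h-convexity ($\lambda_i>0$, preserved by the flow under the hypotheses of Theorem~\ref{thm1-5}) together with the pinching estimate~\eqref{s5:pinc-1} (giving $\lambda_n\le C\lambda_1$ and therefore $F\le(\sum_i\dot f^i)\lambda_n\le C'\lambda_1$). Substituting $F=W(u-c)\ge cW$ into the bound for $\mathcal{Q}$ and applying the maximum principle in space yields the Riccati-type ODE inequality
\[
\frac{d}{dt}\sup_{M_t}W\ \le\ -c_0 W^3+c_1 W^2+c_2 W
\]
for positive constants $c_0,c_1,c_2$ independent of $t_0$, whose solutions are bounded above by a constant $C_*$ determined by $n,l,M_0$. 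Hence $F=W(u-c)\le C_*\sinh(2c_2)$ on $[t_0,\min\{T,t_0+\tau\})$, and since $C_*$ and $\tau$ do not depend on $t_0$, iterating over successive intervals yields $F\le C$ uniformly on $[0,T)$. The main technical obstacle in executing this plan is the careful organization of the zero-order terms in $\partial_t W$: the shift produces several extra contributions compared with the proof of Theorem~\ref{s4:F-bd}, and the global term $\phi$ must be shown to be benign, after which the pinching estimate delivers the coercive $-c_0 W^3$ term that drives the estimate.
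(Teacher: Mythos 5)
Your proposal is correct and follows essentially the same route as the paper: Tso's auxiliary function $W=F/(u-c)$ relative to an inball of $\Omega_{t_0}$, shape estimates bounding $u$ above and below, the shift identities for $\dot F^{kl}$, non-positivity of the $\phi(t)$-terms by h-convexity, and the pinching estimate yielding the coercive lower bound $\dot F^{ij}h_i^kh_k^j\geq Cf^2$ which drives the $-c_0W^3$ term. The only slight imprecision is the claim that solutions of the Riccati inequality are bounded by a constant $C_*$ independent of the data at $t_0$; strictly one gets a bound of the form $\max\{C_1,\ C_2(t-t_0)^{-\sigma}\}$, which still produces a uniform bound after sliding $t_0$, exactly as in the paper's Theorem~\ref{s4:F-bd}.
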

\proof
For any given $t_0\in [0,T)$, let $B_{\rho_0}(p_0)$ be the inball of $\Omega_{t_0}$, where $\rho_0=\rho_-(t_0)$. Then a similar argument as in \cite[Lemma 4.2]{And-Wei2017-2} yields that
\begin{equation}\label{s6:inball-eqn1}
  B_{\rho_0/2}(p_0)\subset \Omega_t,\quad t\in [t_0, \min\{T,t_0+\tau\})
\end{equation}
for some positive $\tau$ depending only on $n,l,\Omega_0$. Consider the support function $u(x,t)=\sinh r_{p_0}(x)\langle \partial r_{p_0},\nu\rangle $ of $M_t$ with respect to the point $p_0$. Then the property \eqref{s6:inball-eqn1} implies that
\begin{equation}\label{s6:sup-1}
  u(x,t)~\geq ~\sinh(\frac{\rho_0}2)~=:~2c
\end{equation}
on $M_t$ for any $t\in[t_0,\min\{T,t_0+\tau\})$. On the other hand, the estimate \eqref{s6:io-radius1} implies that $u(x,t)\leq \sinh(2c_2)$ on $M_t$ for all $t\in[t_0,\min\{T,t_0+\tau\})$. Define the auxiliary function
\begin{equation*}
  W(x,t)=\frac {F(\mathcal{W}-\mathrm{I})}{u(x,t)-c}
\end{equation*}
on $M_t$ for $t\in [t_0,\min\{T,t_0+\tau\})$. Combining \eqref{s2:evl-F} and the evolution equation \eqref{s4:evl-u} for the support function, the function $W$ evolves by
\begin{align}\label{s6:evl-W-1}
   \frac{\partial}{\partial t}W= &\dot{F}^{ij}\left(\nabla_j\nabla_iW+\frac 2{u-c}\nabla_iu\nabla_jW\right)\nonumber \\
  &\quad-\frac{\phi(t)}{u-c}\left( \dot{F}^{ij}(h_i^kh_{k}^j-\delta_i^j)+W\cosh r_{p_0}(x)\right)\nonumber\\
  &\quad +\frac{F}{(u-c)^2}(F+\dot{F}^{kl}h_{kl})\cosh r_{p_0}(x)-\frac{cF}{(u-c)^2}\dot{F}^{ij}h_i^kh_{k}^j-W\dot{F}^{ij}\delta_i^j.
\end{align}
The second line of \eqref{s6:evl-W-1} involves the global term $\phi(t)$ and is clearly non-positive by the h-convexity of the evolving hypersurface. By the homogeneity of $f$ with respect to $\lambda_i=\kappa_i-1$, we have $F+\dot{F}^{kl}h_{kl}=2F+\sum_{k=1}^n\dot{f}^k$ and
\begin{equation*}
  \dot{F}^{ij}h_i^kh_{k}^j= \dot{f}^k(\lambda_k+1)^2=\dot{f}^k\lambda_k^2+2f+\sum_k\dot{f}^k~\geq Cf^2,
\end{equation*}
where the last inequality is due to the pinching estimate \eqref{s5:pinc-1}. The last term of \eqref{s6:evl-W-1} is non-positive and can be thrown away.  In summary, we arrive at
\begin{align*}
   \frac{\partial}{\partial t}W\leq & ~\dot{\Psi}^{ij}\left(\nabla_j\nabla_iW+\frac 2{u-c}\nabla_iu\nabla_jW\right)\nonumber \\
  &\quad +W^2(2+F^{-1}\sum_{k=1}^n\dot{f}^k)\cosh r_{p_0}(x)- c^2CW^3.
\end{align*}
Note that $\dot{f}^k$ is homogeneous of degree zero, the pinching estimate \eqref{s5:pinc-1} implies that each $\dot{f}^k$ is bounded from above and below by positive constants. Then without loss of generality we can assume that $F^{-1}\sum_{k=1}^n\dot{f}^k\leq 1$ since otherwise $F\leq \sum_{k=1}^n\dot{f}^k\leq C$ for some constant $C>0$. By the upper bound $r_{p_0}(x)\leq 2c_2$, we obtain the following estimate
\begin{align*}
   \frac{\partial}{\partial t}W\leq &~ \dot{\Psi}^{ij}\left(\nabla_j\nabla_iW+\frac 2{u-c}\nabla_iu\nabla_jW\right)+W^2\left(3\cosh (2c_2)- c^{2}CW\right)
\end{align*}
holds on $[t_0,\min\{T,t_0+\tau\})$. Then the maximum principle implies that $W$ is uniformly bounded from above and  the upper bound on $F$ follows by the upper bound on the outer radius in \eqref{s6:io-radius1}.
\endproof

\begin{prop}\label{s6:F-lb}
There exists a positive constant $C$, independent of time $T$, such that $F\geq C>0$.
\end{prop}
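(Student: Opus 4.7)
The plan is to combine a uniform geometric lower bound on $\max_{M_t}F$ with the parabolic Harnack inequality of Krylov and Safonov \cite{KS81} applied to the evolution equation of $F$. First, I would verify that $F$ is a positive solution of a uniformly parabolic linear equation with bounded coefficients. The pinching estimate $\lambda_n\leq C\lambda_1$ proved at the start of the section, together with the upper bound $F\leq C$ of Proposition \ref{s6:F-ub} and the shape estimates \eqref{s6:io-radius1}, forces the ratios $\lambda_i/\lambda_j$ to stay in a fixed compact subset of $(0,\infty)$. By the degree-zero homogeneity of $\dot f^i$ and the smoothness of $f$ on $\Gamma_+$, the derivatives $\dot f^i$ are then uniformly bounded above and below by positive constants. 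Reading the evolution equation \eqref{s2:evl-F} as
\begin{equation*}
\partial_t F - \dot F^{kl}\nabla_k\nabla_l F \;=\; b\,(F-\phi(t)),\qquad b := \sum_i \dot f^i\lambda_i(\lambda_i+2)\geq 0,
\end{equation*}
we see that $\dot F^{kl}$ is uniformly elliptic, $b$ is uniformly bounded, and $\phi(t)$ is bounded by the upper bound on $F$.

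Next, I would establish the geometric lower bound $\max_{M_t}F(\cdot,t)\geq c_0>0$ from the bound $\rho_+(t)\leq c_2$. Let $B_{\rho_+(t)}(p)$ be a circumscribing geodesic ball of $\Omega_t$ and $q\in M_t\cap\partial B_{\rho_+(t)}(p)$ a tangent point. Since both surfaces share the outward normal at $q$ and $\Omega_t\subset B_{\rho_+(t)}(p)$, a comparison of second fundamental forms yields $\kappa_i(q)\geq\coth(\rho_+(t))\geq\coth(c_2)>1$ for every $i$. Therefore $\lambda_i(q)\geq\delta:=\coth(c_2)-1>0$, and by the monotonicity and homogeneity of $f$,
\begin{equation*}
F(q,t) \;=\; f(\lambda(q))\;\geq\; f(\delta,\ldots,\delta)\;=\;\delta\,f(1,\ldots,1)\;=:\;c_0\;>\;0.
\end{equation*}

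Finally, I would invoke the Krylov-Safonov parabolic Harnack inequality \cite{KS81} for the positive function $F$ on the closed manifold $M_t$. On parabolic cylinders of time-height $\tau$, the Harnack inequality for inhomogeneous uniformly parabolic equations takes the form
\begin{equation*}
\sup_{M_{t-\tau}}F \;\leq\; C_\tau\,\inf_{M_t}F \;+\; \epsilon_\tau\,\|b\phi\|_\infty,
\end{equation*}
with $\epsilon_\tau\to 0$ as $\tau\to 0$. Choosing $\tau$ small enough that $\epsilon_\tau\|b\phi\|_\infty\leq c_0/2$ and combining with the second step gives $\inf_{M_t}F\geq c_0/(2C_\tau)$ for all $t\geq\tau$, while the initial interval $t\in[0,\tau]$ follows from $\min_{M_0}F>0$ together with a short-time estimate. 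The main obstacle is the handling of the inhomogeneous source term $-b\phi(t)$ within the Krylov-Safonov framework; this is standard but requires either the quantitative scaling argument just sketched or the reformulation of the equation as homogeneous after adding a suitable constant, so that the geometric lower bound $c_0$ on $\max F$ — itself a clean consequence of the bounded outer radius via comparison with the circumscribing geodesic ball — converts into the uniform pointwise lower bound on $F$ claimed in the proposition.
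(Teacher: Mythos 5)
Your proposal follows essentially the same strategy as the paper: establish uniform parabolicity and bounded coefficients in \eqref{s2:evl-F} from the pinching estimate and the upper bound on $F$, obtain a pointwise lower bound $F\geq c_0>0$ at the tangency with the circumscribing geodesic ball of radius $\rho_+(t)\leq c_2$, and then propagate this via the Krylov--Safonov Harnack inequality. The only cosmetic difference is that you invoke a global form of the Harnack inequality on $M_t$ (with a source-term error that you control by shrinking $\tau$), whereas the paper applies the local Harnack estimate on small space-time cylinders and iterates across finitely many overlapping balls using the uniform diameter bound; both are valid ways to carry out the same chaining step.
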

\proof
Since the evolving hypersurface $M_{t}$ is strictly h-convex, for each time $t_0\in [0,T)$ there exists a point $p\in \mathbb{H}^{n+1}$ and $x_0\in M_{t_0}$ such that $\Omega_{t_0}\subset B_{\rho_+(t_0)}(p)$ and $\Omega_{t_0}\cap B_{\rho_+(t_0)}(p)=x_0$.  By the estimate \eqref{s6:io-radius1} on the outer radius, the value of $F$ at the point $(x_0,t_0)$ satisfies
\begin{equation*}
  F(x_0,t_0)\geq \coth \rho_+(t_0)\geq \coth c_2.
\end{equation*}
Recall that the function $F$ satisfies the evolution equation \eqref{s2:evl-F} :
\begin{align}\label{s6:evl-F}
  \frac{\partial}{\partial t}F=&~g^{ik}\dot{F}^{ij}\nabla_k\nabla_jF+(F-\phi(t))(\dot{F}^{ij}h_i^kh_k^j-\dot{F}^{ij}\delta_i^j).
\end{align}
By the pinching estimate \eqref{s5:pinc-1} and the upper bound on the curvature proved in Proposition \ref{s6:F-ub}, the equation \eqref{s6:evl-F} is uniformly parabolic and the coefficient of the gradient terms and the lower order terms in \eqref{s6:evl-F} have bounded $C^0$ norm. Then there exists $r>0$ depending only on the bounds on the coefficients of \eqref{s6:evl-F} such that we can apply the Harnack inequality of Krylov and Safonov \cite{KS81} to \eqref{s6:evl-F} in a  space-time neighbourhood $B_r(x_0)\times (t_0-r^2,t_0]$ of $x_0$ and deduce the lower bound $F\geq C F(x_0,t_0)\geq C>0$ in a smaller neighbourhood $B_{r/2}(x_0)\times (t_0-\frac 14r^2,t_0]$. Note that the diameter $r$ of the space-time neighbourhood is not dependent on the point $(x_0,t_0)$. Consider the boundary point $x_1\in \partial B_{r/2}(x_0)$. We can look at the equation \eqref{s6:evl-F} in a neighborhood $B_r(x_1)\times (t_0-r^2,t_0]$ of the point $(x_1,t_0)$. The Harnack inequality implies that $F\geq C F(x_1,t_0)\geq C>0$ in $B_{r/2}(x_1)\times (t_0-\frac 14r^2,t_0]$.  Since the diameter of each $M_{t_0}$ is uniformly bounded from above, after a finite number of iterations, we conclude that $F\geq C>0$ on $M_{t_0}$ for a uniform constant $C$ independent of $t_0$.
\endproof

The pinching estimate \eqref{s5:pinc-1} together with the bounds on $F$ proven in Proposition \ref{s6:F-ub} and Proposition \ref{s6:F-lb} implies that the shifted principal curvatures $\lambda=(\lambda_1,\cdots,\lambda_n)$ satisfy
\begin{equation*}
  0<C^{-1}~\leq ~\lambda_i~\leq ~C
\end{equation*}
for some constant $C>0$ and $t\in [0,T)$. This gives the uniform $C^2$ estimate of the evolving hypersurfaces $M_t$. Moreover, the global term $\phi(t)$ given in \eqref{s1:phit-2} satisfies $0<C^{-1}\leq \phi(t)\leq C$ for some constant $C>0$.

\subsection{Long time existence and convergence}

If $F$ is inverse-concave, by applying the similar argument as \cite{And-Wei2017-2,Mcc2017} (see also \cite{TW2013}) to the equation \eqref{s5:flow-u},  we can first derive the $C^{2,\alpha}$ estimate and then the $C^{k,\alpha}$ estimate for all $k\geq 2$. If $F$ is concave or $n=2$, we write the flow \eqref{flow-VMCF-2} as a scalar parabolic PDE for the radial function as follows: Since each $M_t$ is strictly h-convex, we write $M_{t}$ as a radial graph over a geodesic sphere for a smooth function $\rho$ on $S^n$. Let $\{\theta^i\}, i=1,\cdots,n$ be a local coordinate system on $S^n$. The induced metric on $M_{t_0}$ from $\mathbb{H}^{n+1}$ takes the form
$$  g_{ij}=\bar{\nabla}_i\rho \bar{\nabla}_j\rho+\sinh^2\rho\bar{g}_{ij},$$
where $\bar{g}_{ij}$ denotes the round metric on $S^n$. Up to a tangential diffeomorphism, the flow equation \eqref{flow-VMCF-2} is equivalent to the following scalar parabolic equation
\begin{equation}\label{s6:graph-flow}
  \frac{\partial }{\partial t}\rho=~(\phi(t)-F(\mathcal{W}-\mathrm{I}))\sqrt{1+{|\bar{\nabla}\rho|^2}/{\sinh^2\rho}}.
\end{equation}
for the smooth function $\rho(\cdot,t)$ on $S^n$. The Weingarten matrix $\mathcal{W}=(h_i^j)$ can be expressed as
\begin{equation*}
  h_i^j=~\frac{\coth \rho}{v}\delta_i^j+\frac{\coth \rho}{v^3\sinh^2 \rho}\bar{\nabla}^j\rho\bar{\nabla}_i\rho-\frac {\tilde{\sigma}^{jk}}{v\sinh^2\rho}\bar{\nabla}_{k}\bar{\nabla}_i\rho,
\end{equation*}
where
 \begin{equation*}
  v=\sqrt{1+{|\bar{\nabla}\rho|^2}/{\sinh^2\rho}},\quad \mathrm{and }\quad  \tilde{\sigma}^{jk}~=~\sigma^{jk}-\frac{\bar{\nabla}^j\rho\bar{\nabla}^k\rho}{v^2\sinh^2\rho}.
\end{equation*}
Thus we can apply the argument as in \cite{And2004,McC2005} to derive the higher regularity estimate. Therefore, for any $F$ satisfying the assumption of Theorem \ref{thm1-5}, the solution of the flow \eqref{flow-VMCF-2} exists for all time $t\in[0,\infty)$ and remains smooth and strictly h-convex. Moreover, the Alexandrov reflection argument as in \cite[\S 6]{And-Wei2017-2} implies that the flow converges smoothly as time $t\to\infty$ to a geodesic sphere $\partial B_{r_{\infty}}$ which satisfies $\widetilde{W}_l(B_{r_{\infty}})=\widetilde{W}_l(\Omega_{0})$.  This finishes the proof of Theorem \ref{thm1-5}.

\section{Conformal deformation in the conformal class of $\bar g$}

In this section we mention an interesting connection (closely related to the results of \cite{EGM}) between flows of h-convex hypersurfaces in hyperbolic space by functions of principal curvatures, and conformal flows of conformally flat metrics on $S^n$.  This allows us to translate some of our results to convergence theorems for metric flows, and our isoperimetric inequalities to corresponding results for conformally flat metrics.

The crucial observation is that there is a correspondence between conformally flat metrics on $S^n$ satisfying a certain curvature inequality, and horospherically convex hypersurfaces.  To describe this, we recall that the isometry group of $\mathbb H^{n+1}$ coincides with $O_+(n+1,1)$, the group of future-preserving linear isometries of Minkowski space.  This also coincides with the M\"obius group of conformal diffeomorphisms of $S^n$, by the following correspodence:  If $L\in O_+(n+1,1)$, we define a map $\rho_L$ from $S^n$ to $S^n$ by
$$
\rho_L(\e) = \pi(L(\e,1)),
$$
where $\pi(x,y)=\frac{x}{y}$ is the radial projection from the future null cone to the sphere at height $1$.
This defines a group homomorphism from $O_+(n+1,1)$ to the group of M\"obius transformations.  We have the following result:

\begin{prop}
If $L\in O_+(n+1,1)$ and $M\subset{\mathbb H}^{n+1}$ is a horospherically convex hypersurface with horospherical support function $u:\ S^n\to\RR$, denote by $u_L$ the horospherical support function of $L(M)$.  Then $\rho_L$ is an isometry from $\E^{-2u}\bar g$ to $\E^{-2u_L}\bar g$.  That is,
$$
\E^{-2u(\e)}\bar g_{\e}(v_1,v_2) = \E^{-2u_L(\rho_L(\e))}\bar g_{\rho_L(\e)}(D\rho_L(v_1),D\rho_L(v_2))
$$
for all $\e\in S^n$ and $v_1,v_2\in T_\e S^n$.
\end{prop}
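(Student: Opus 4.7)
The plan is to identify each supporting horoball with a null vector in the future null cone, push it forward by $L$, and read off both the transformation law of the horospherical support function and the conformal factor of $\rho_L$ directly from the two features of $L$: future‑preservation and invariance of the Minkowski inner product.

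First I would establish how $L$ transforms the horospherical support function. Since $L$ is future-preserving, for each $\e\in S^n$ we may write $L(\e,1)=\lambda(\e)(\rho_L(\e),1)$ with $\lambda(\e)>0$. Invariance of the Minkowski inner product then yields the equivariance
$$f_{\rho_L(\e)}\circ L\,=\,f_{\e}-\log\lambda(\e),$$
read directly off the definition $f_{\e}(\xi)=\log(-\xi\cdot(\e,1))$ used in \eqref{eq:defu}, since $L\xi\cdot L(\e,1)=\xi\cdot(\e,1)$. Taking the supremum over $\Omega$ on both sides, and noting that $L(\Omega)$ is the region enclosed by $L(M)$, gives
$$u_L(\rho_L(\e))\,=\,\sup_{\xi\in\Omega}f_{\rho_L(\e)}(L\xi)\,=\,u(\e)-\log\lambda(\e).\qquad(\star)$$

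Next I would compute the conformal factor of $\rho_L$. Using the elementary identity $(\e,1)\cdot(\e',1)=\e\cdot\e'-1=-\tfrac12|\e-\e'|^2$ and again the $L$‑invariance of the inner product,
$$|\e-\e'|^2\,=\,\lambda(\e)\lambda(\e')\,|\rho_L(\e)-\rho_L(\e')|^2.$$
Passing to the infinitesimal limit along a tangent vector $v\in T_\e S^n$ gives $\bar g_{\e}(v,v)=\lambda(\e)^2\,\bar g_{\rho_L(\e)}(D\rho_L(v),D\rho_L(v))$, i.e.\ $\rho_L^{*}\bar g=\lambda^{-2}\bar g$.

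Combining this with $(\star)$:
$$\rho_L^{*}(\E^{-2u_L}\bar g)\big|_{\e}\,=\,\E^{-2u_L(\rho_L(\e))}\,\lambda(\e)^{-2}\,\bar g_{\e}\,=\,\E^{-2u(\e)}\,\lambda(\e)^{2}\lambda(\e)^{-2}\,\bar g_{\e}\,=\,\E^{-2u(\e)}\,\bar g_{\e},$$
which is exactly the asserted isometry. The whole argument is really two lines of Minkowski linear algebra once one has recognised the horospherical support function as the supremum of the family of potentials $f_{\e}$; the main point to keep track of is simply that the factor $\lambda(\e)$ enters both in $(\star)$ and in the conformal factor of $\rho_L$, and that the two contributions cancel precisely. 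I do not anticipate any serious obstacle beyond this bookkeeping.
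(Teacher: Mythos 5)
Your proof is correct and follows essentially the same route as the paper: express everything through the Minkowski inner product with the null vectors $(\e,1)$, use invariance under $L$ to derive the transformation law of the support function, and combine with the conformal factor of $\rho_L$. You additionally supply the clean derivation of that conformal factor via the identity $(\e,1)\cdot(\e',1)=-\tfrac12|\e-\e'|^2$, which the paper merely asserts, and your careful bookkeeping with $\lambda(\e)$ avoids the sign typo ($\E^{-u}$ in place of $\E^{u}$) that appears in the paper's chain of equalities.
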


\begin{proof}
We compute:
\begin{align*}
\E^{-u(\e)}&= -X\cdot (\e,1)\\
&= -L(X)\cdot L(\e,1)\\
&= - L(X)\cdot \mu (\e_L,1)\qquad\text{where\ }\mu = |L(\e,1)\cdot(0,1)|\\
&=\mu \E^{-u_L(\e_L)}.
\end{align*}
On the other hand the M\"obius transformation $\rho_L$ is a conformal transformation with conformal factor
$\mu=|L(\e,1)\cdot(0,1)|$.  The result follows directly.
\end{proof}

\begin{cor}
Isometry invariants of a horospherically convex hypersurface $M$ are M\"obius invariants of the conformally flat metric
$\tilde g=\E^{-2u}\bar g$, and vice versa.  In particular, Riemannian invariants of $g$ are isometry invariants of $M$.
\end{cor}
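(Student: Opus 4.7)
The plan is to deduce the corollary directly from the Proposition by a short chase of definitions, together with the classical fact that the homomorphism $L\mapsto\rho_L$ maps $O_+(n+1,1)$ \emph{onto} the M\"obius group of $S^n$ (the identification of the M\"obius group with $O_+(n+1,1)/\{\pm I\}$ already implicit in the paper). Combined with the bijection $M\leftrightarrow\tilde g_M:=\E^{-2u}\bar g$ from Section~\ref{sec:h-convex} between smooth strictly h-convex hypersurfaces and conformally flat metrics on $S^n$ whose associated tensor $A_{ij}[u]$ is positive definite, the Proposition states precisely that this bijection is equivariant: the isometry relation $\rho_L^*\tilde g_{L(M)}=\tilde g_M$ may be rewritten as
\[
\tilde g_{L(M)}=(\rho_L^{-1})^*\tilde g_M,\qquad L\in O_+(n+1,1).
\]

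From this equivariance the main claim is a tautology. Suppose $\Phi$ is a real-valued function on h-convex hypersurfaces with $\Phi(L(M))=\Phi(M)$ for every $L\in O_+(n+1,1)$, and define $\tilde\Phi$ on the corresponding class of conformally flat metrics by $\tilde\Phi(\tilde g_M):=\Phi(M)$; this is well defined by the bijectivity of $M\mapsto\tilde g_M$. For any M\"obius map $\rho$ of $S^n$, surjectivity of $L\mapsto\rho_L$ lets us choose $L$ with $\rho=\rho_L$, whence
\[
\tilde\Phi(\rho^*\tilde g_M)=\tilde\Phi(\tilde g_{L^{-1}(M)})=\Phi(L^{-1}(M))=\Phi(M)=\tilde\Phi(\tilde g_M),
\]
so $\tilde\Phi$ is M\"obius invariant. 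The reverse direction is identical, running the same computation with the roles reversed, and the map $\Phi\mapsto\tilde\Phi$ is clearly a bijection between the two classes of invariants.

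For the closing sentence, any $L\in O_+(n+1,1)$ restricts to an isometry from $(M,g)$ to $(L(M),g_{L(M)})$ of the induced Riemannian metrics, since $L$ itself is an isometry of the ambient $\mathbb H^{n+1}$. Hence every Riemannian invariant of $g$ takes the same value on $M$ and on $L(M)$, and is therefore, by definition, an isometry invariant of the hypersurface (and, by the first part, transports to a M\"obius invariant of $\tilde g$).

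There is essentially no obstacle beyond correctly setting up the dictionary; the Proposition does all the geometric work, and the only auxiliary input needed is the surjectivity of $L\mapsto\rho_L$ onto the conformal group of $S^n$. If anything delicate arises it is a minor matter of bookkeeping: one should keep track of whether $\rho_L$ or $\rho_L^{-1}=\rho_{L^{-1}}$ appears in the pullback, and verify that the correspondence $\Phi\leftrightarrow\tilde\Phi$ is a bijection on the nose (rather than just a pair of one-sided maps), which follows from the one-to-one correspondence of the underlying geometric objects.
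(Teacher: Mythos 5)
The paper states this corollary without proof---it is offered as an immediate consequence of the preceding Proposition---so there is nothing in the paper to compare against line by line. Your unwinding is the natural one and is essentially correct: the Proposition says exactly that the bijection $M\mapsto\tilde g_M$ between uniformly h-convex hypersurfaces and admissible conformally flat metrics is equivariant for the identification $L\mapsto\rho_L$ of $O_+(n+1,1)$ with the M\"obius group, and invariants transport across an equivariant bijection. One small slip: the parenthetical identification with $O_+(n+1,1)/\{\pm I\}$ is off, since $-I$ reverses time orientation and hence does not lie in $O_+(n+1,1)$; the map $L\mapsto\rho_L$ is already a bijection, no quotient needed. This does not affect your argument, which uses only surjectivity.

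Where your reading likely diverges from the authors' intent is the final sentence. The ``$g$'' there is almost certainly a slip for $\tilde g$. The intended point is that a Riemannian (i.e.\ diffeomorphism-) invariant of $\tilde g$ is in particular invariant under the M\"obius subgroup of diffeomorphisms of $S^n$, hence is a M\"obius invariant, hence by the ``vice versa'' direction of the first sentence is an isometry invariant of $M$. Read this way, the sentence is a genuine ``in particular'' of the first, and it sets up the next paragraph, where the Schouten tensor of $\tilde g$ is computed, identified with $(\mathcal W-\mathrm I)^{-1}+\tfrac12\mathrm I$, and contrasted (for $n=2$) with a M\"obius invariant of $\tilde g$ that is \emph{not} a Riemannian invariant. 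Your reading, with $g$ the induced metric on $M$, gives a true but much more elementary statement (ambient isometries restrict to isometries of the induced metric) that needs neither the Proposition nor the first sentence of the corollary, and does not feed into the Schouten-tensor discussion that follows.
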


Computing explicitly, we find that for $n>2$ the Schouten tensor
$$\tilde S_{ij} = \frac{1}{n-2}\left(\tilde R_{ij}-\frac{\tilde R}{2(n-1)}\tilde g_{ij}\right)$$ of $\tilde g$ (which completely determines the curvature tensor for a conformally flat metric) is given by
\begin{align*}
\tilde S_{ij} &= \frac12\bar g_{ij}+\bar\nabla_i\bar\nabla_ju+u_iu_j-\frac12|\bar\nabla u|^2\bar g_{ij}\\
&= \E^{-u}A_{ij}+\frac12 \tilde{g}_{ij}\\
&= \left[(\mathcal{W}-\mathrm I)^{-1}\right]_i^p \tilde g_{pk}+\frac12 \tilde g_{ij}.
\end{align*}
It follows that the eigenvalues of $\tilde S_{ij}$ (with respect to $\tilde g_{ij}$) are $\frac12 + \frac{1}{\lambda_i}$, where $\lambda_i = \kappa_i-1$.    When $n=2$ the tensor $\tilde{S}_{ij}$ defined by the right-hand side of the above equation is by construction M\"obius-invariant, and so gives a M\"obius invariant of $\tilde g$ which is not a Riemannian invariant.  This tensor still has the same relation to the principal curvatures of the corresponding h-convex hypersurface.

We observe that this connection between the Schouten tensor of $\tilde g$ and the Weingarten map of the hypersurface leads to a conincidence between the corresponding evolution equations:  If a family of h-convex hypersurfaces $M_t=X(M,t)$ evolves according to a curvature-driven evolution equation of the form
$$
\frac{\partial X}{\partial t} = -F({\mathcal W}-{\mathrm I},t)\nu
$$
then the metric $\tilde g$ satisfies $\tilde S>\frac12\tilde g$, and evolves according to the parabolic conformal flow
$$
\frac{\partial\tilde g}{\partial t} = 2F((\tilde S-\frac12\tilde g)^{-1},t)\tilde g.
$$
In particular the convergence theorems for hypersurface flows correspond to convergence theorems for the corresponding conformal flows, and the resulting geometric inequalities for hypersurfaces imply corresponding geometric inequalities for the metric $\tilde g$.

\begin{bibdiv}
\begin{biblist}

\bib{And1994}{article}{
   author={Andrews, Ben},
   title={Contraction of convex hypersurfaces in Euclidean space},
   journal={Calc. Var. Partial Differential Equations},
   volume={2},
   date={1994},
   number={2},
   pages={151--171},
}

\bib{And1999}{article}{
   author={Andrews, Ben},
   title={Gauss curvature flow: the fate of the rolling stones},
   journal={Invent. Math.},
   volume={138},
   date={1999},
   number={1},
   pages={151--161},
}

%
%

\bib{And2004}{article}{
   author={Andrews, Ben},
   title={Fully nonlinear parabolic equations in two space variables},
  eprint={arXiv:math.AP/0402235},
}

\bib{And2007}{article}{
   author={Andrews, Ben},
   title={Pinching estimates and motion of hypersurfaces by curvature
   functions},
   journal={J. Reine Angew. Math.},
   volume={608},
   date={2007},
   pages={17--33},
}

%
\bib{And2010}{article}{
   author={Andrews, Ben},
   title={Moving surfaces by non-concave curvature functions},
   journal={Calc. Var. Partial Differential Equations},
   volume={39},
   date={2010},
   number={3-4},
   pages={649--657},
}


\bib{And-chen2012}{article}{
   author={Andrews, Ben},
   author={Chen, Xuzhong},
   title={Surfaces Moving by Powers of Gauss Curvature},
   journal={Pure Appl. Math. Q.},
   volume={8},
   number={4},
   date={2012},
   pages={825--834},
}

\bib{And-chen2014}{article}{
   author={Andrews, Ben},
   author={Chen, Xuzhong},
   title={Curvature flow in hyperbolic spaces},
   journal={J. Reine Angew. Math.},
   volume={729},
   date={2017},
   pages={29--49},
}


\bib{AH}{book}{
   author={Andrews, Ben},
   author={Hopper, Christopher},
   title={The Ricci flow in Riemannian geometry},
   series={Lecture Notes in Mathematics},
   volume={2011},
   publisher={Springer, Heidelberg},
   date={2011},
   pages={xviii+296},
}

\bib{Andrews-McCoy-Zheng}{article}{
   author={Andrews, Ben},
   author={McCoy, James},
   author={Zheng, Yu},
   title={Contracting convex hypersurfaces by curvature},
   journal={Calc. Var. Partial Differential Equations},
   volume={47},
   date={2013},
   number={3-4},
   pages={611--665},
}


\bib{And-Wei2017-2}{article}{
	author={Andrews, Ben},
	author={Wei, Yong},
	title={Quermassintegral preserving curvature flow in Hyperbolic space},
journal={to appear in   Geometric and Functional Analysis},
	eprint={arXiv:1708.09583},
	}

%


\bib{Be-Pip2016}{article}{
   author={Bertini, Maria Chiara},
   author={Pipoli, Giuseppe},
   title={Volume preserving non-homogeneous mean curvature flow in
   hyperbolic space},
   journal={Differential Geom. Appl.},
   volume={54},
   date={2017},
   pages={448--463},
}

%
\bib{Chow97}{article}{
   author={Chow, Bennett},
   title={Geometric aspects of Aleksandrov reflection and gradient estimates
   for parabolic equations},
   journal={Comm. Anal. Geom.},
   volume={5},
   date={1997},
   number={2},
   pages={389--409},
}

\bib{Chow-Gul96}{article}{
   author={Chow, Bennett},
   author={Gulliver, Robert},
   title={Aleksandrov reflection and nonlinear evolution equations. I. The
   $n$-sphere and $n$-ball},
   journal={Calc. Var. Partial Differential Equations},
   volume={4},
   date={1996},
   number={3},
   pages={249--264},
}

\bib{Cab-Miq2007}{article}{
   author={Cabezas-Rivas, Esther},
   author={Miquel, Vicente},
   title={Volume preserving mean curvature flow in the hyperbolic space},
   journal={Indiana Univ. Math. J.},
   volume={56},
   date={2007},
   number={5},
   pages={2061--2086},
}

\bib{EGM}{article}{
   author={Espinar, Jos\'e M.},
   author={G\'alvez, Jos\'e A.},
   author={Mira, Pablo},
   title={Hypersurfaces in $\mathbb H^{n+1}$ and conformally invariant
   equations: the generalized Christoffel and Nirenberg problems},
   journal={J. Eur. Math. Soc. (JEMS)},
   volume={11},
   date={2009},
   number={4},
   pages={903--939},
}


%


\bib{GaoLM17}{article}{
	author={Gao, Shanze},
	author={Li, Haizhong},
	author={Ma, Hui},
	title={Uniqueness of closed self-similar solutions to $\sigma_k^{\alpha}$-curvature flow}, 	
	eprint={arXiv:1701.02642 [math.DG]},
	}
%

\bib{GuanMa03}{article}{
   author={Guan, Pengfei},
   author={Ma, Xi-Nan},
   title={The Christoffel-Minkowski problem. I. Convexity of solutions of a
   Hessian equation},
   journal={Invent. Math.},
   volume={151},
   date={2003},
   number={3},
   pages={553--577},
}

%
%
\bib{GWW-2014JDG}{article}{
   author={Ge, Yuxin},
   author={Wang, Guofang},
   author={Wu, Jie},
   title={Hyperbolic Alexandrov-Fenchel quermassintegral inequalities II},
   journal={J. Differential Geom.},
   volume={98},
   date={2014},
   number={2},
   pages={237--260},
}

\bib{Ha1982}{article}{
   author={Hamilton, Richard S.},
   title={Three-manifolds with positive Ricci curvature},
   journal={J. Differential Geom.},
   volume={17},
   date={1982},
   number={2},
   pages={255--306},
}

\bib{Ha1986}{article}{
   author={Hamilton, Richard S.},
   title={Four-manifolds with positive curvature operator},
   journal={J. Differential Geom.},
   volume={24},
   date={1986},
   number={2},
   pages={153--179},
}


\bib{KS81}{article}{
   author={Krylov, N. V.},
   author={Safonov, M. V.},
   title={A property of the solutions of parabolic equations with measurable
   coefficients},
   language={Russian},
   journal={Izv. Akad. Nauk SSSR Ser. Mat.},
   volume={44},
   date={1980},
   number={1},
   pages={161--175, 239},
}

%
\bib{LWX-2014}{article}{
   author={Li, Haizhong},
   author={Wei, Yong},
   author={Xiong, Changwei},
   title={A geometric inequality on hypersurface in hyperbolic space},
   journal={Adv. Math.},
   volume={253},
   date={2014},
   pages={152--162},
}

%
\bib{McC2005}{article}{
   author={McCoy, James A.},
   title={Mixed volume preserving curvature flows},
   journal={Calc. Var. Partial Differential Equations},
   volume={24},
   date={2005},
   number={2},
   pages={131--154},
}%

\bib{Mcc2017}{article}{
   author={McCoy, James A.},
   title={More mixed volume preserving curvature flows},
   journal={J. Geom. Anal.},
   volume={27},
   date={2017},
   number={4},
   pages={3140--3165},
}

\bib{Mak2012}{article}{
	author={Makowski, Matthias},
	title={Mixed volume preserving curvature flows in hyperbolic space},
	eprint={arXiv:1208.1898},
	}

%
\bib{Sant2004}{book}{
   author={Santal\'o, Luis A.},
   title={Integral geometry and geometric probability},
   series={Cambridge Mathematical Library},
   edition={2},
   note={With a foreword by Mark Kac},
   publisher={Cambridge University Press, Cambridge},
   date={2004},
   pages={xx+404},
}

\bib{Sol2006}{article}{
   author={Solanes, Gil},
   title={Integral geometry and the Gauss-Bonnet theorem in constant
   curvature spaces},
   journal={Trans. Amer. Math. Soc.},
   volume={358},
   date={2006},
   number={3},
   pages={1105--1115},
}

\bib{Scharz75}{article}{
   author={Schwarz, Gerald W.},
   title={Smooth functions invariant under the action of a compact Lie
   group},
   journal={Topology},
   volume={14},
   date={1975},
   pages={63--68},
}

\bib{Steinhagen}{article}{
   author={Steinhagen, Paul},
   title={\"Uber die gr\"o\ss te Kugel in einer konvexen Punktmenge},
   language={German},
   journal={Abh. Math. Sem. Univ. Hamburg},
   volume={1},
   date={1922},
   number={1},
   pages={15--26},
  }

\bib{Tso85}{article}{
   author={Tso, Kaising},
   title={Deforming a hypersurface by its Gauss-Kronecker curvature},
   journal={Comm. Pure Appl. Math.},
   volume={38},
   date={1985},
   number={6},
   pages={867--882},
}

\bib{TW2013}{article}{
author={Tian, Guji},
author={Wang, Xu-Jia},
  title={A priori estimates for fully nonlinear parabolic equations},
  journal={International Mathematics Research Notices},
  volume={2013},
  number={17},
  pages={3857--3877},
  year={2013},
}

\bib{WX}{article}{
   author={Wang, Guofang},
   author={Xia, Chao},
   title={Isoperimetric type problems and Alexandrov-Fenchel type
   inequalities in the hyperbolic space},
   journal={Adv. Math.},
   volume={259},
   date={2014},
   pages={532--556},
}

\end{biblist}
\end{bibdiv}

\end{document}